\def\R{\mathbb{R}}
\def\C{\mathbb{C}}
\def\N{\mathbb{N}}
\def\Z{\mathbb{Z}}
\def\S{\mathbb{S}}
\def\H{\mathbb{H}}
\def\B{\mathbb{B}}
\def\diam{\text{{\rm diam}\,}}
\def\card{\#}
\def\Lbd{\Lambda}
\def\udB{\overline{\dim}_B}
\def\vol{\mathrm{vol}}
\def\h{h_{\mathrm{c}}}
\def\Pa{\mathcal{P}}
\def\Gr{\mathcal{G}}
\newtheorem{theorem}{Theorem}[section]
\newtheorem{theorem*}{Theorem}
\newtheorem{lemma}[theorem]{Lemma}
\newtheorem{proposition}[theorem]{Proposition}
\newtheorem{corollary}[theorem]{Corollary}
\newtheorem{conjecture}[theorem]{Conjecture}
\newtheorem{conjecture*}{Conjecture}
\newtheorem{problem}[theorem]{Problem}
\theoremstyle{definition}
\newtheorem{definition}[theorem]{Definition}
\newtheorem{example}[theorem]{Example}
\theoremstyle{remark}
\newtheorem*{remark*}{Remark}
\begin{document}

\title
[Critical exponent, dimension of limit set
and convex core entropy]
{The critical exponent, the Hausdorff dimension of the limit set 
and the convex core entropy of a Kleinian group}

\author{Kurt Falk}
\address{Universit\"at Bremen, FB 3 - Mathematik, 
Bibliothekstra{\ss}e 1, 28359 Bremen, Germany}
\email{khf@math.uni-bremen.de}
\author{Katsuhiko Matsuzaki}
\address{Department of Mathematics, School of Education,
Waseda University, Nishi-Waseda 1-6-1, Shinjuku, Tokyo 169-8050, Japan}
\email{matsuzak@waseda.jp}

\date{}

\begin{abstract}
In this paper we study the relationship between three numerical 
invariants associated to a Kleinian group, namely the critical exponent, 
the Hausdorff dimension of the limit set and the convex core entropy,
which coincides with the upper box-counting dimension of the limit set.
The Hausdorff dimension of the limit set is naturally bounded below
by the critical exponent and above by the convex core entropy. We 
investigate when these inequalities become strict and when they 
are equalities.
\end{abstract}

\thanks{The authors were supported by JSPS Grant-in-Aid for Scientific 
Research (B) \#20340030}

\subjclass[2000]{Primary 30F40, Secondary 37F30}

\maketitle

\section{Introduction and statement of results}

In this paper we study the relationship between three numerical 
invariants associated to a Kleinian group, namely the critical exponent, 
the Hausdorff dimension of the limit set and the convex core entropy,
which is shown here to coincide with the upper box-counting dimension,
also referred to as upper Minkowski dimension, of the limit set.
While the relationship between the first two has already been
addressed in various ways, the convex core entropy of a Kleinian group 
is the new element in the picture.
The convex core entropy $\h(\Lbd(G))$ of a Kleinian group $G$ 
acting on the unit ball $\B^{n+1}$
is computed in a similar way to the volume entropy of the hyperbolic manifold
given by $G$, with the essential difference that one restricts the universal
cover of the manifold, in this case simply hyperbolic space, 
to the $\varepsilon$-neighbourhood of the convex hull $H(\Lbd(G))$ of the 
limit set $\Lbd(G)$ of $G$.
The name convex core entropy is owed to the fact that the lift of the 
convex core of a hyperbolic manifold to the universal cover is precisely
the convex hull of the limit set of the underlying Kleinian group.
The convex core entropy can also be written as the critical exponent $\Delta(X)$
of an `extended Poincar\'e series', given in terms of a uniformly distributed 
set $X$ within $H(\Lbd(G))$.
In this section we shall denote the above invariants simply by 
$\delta$, $\dim_H$, $\h$ and $\udB$, respectively,
when it is clear from the context what Kleinian group they are 
associated with. Also, $\dim_B$ will refer to the box-counting
dimension when it exists.
The most general relationship between them, formulated as 
Theorem~\ref{basicinequality} in the main text, is the following.

\begin{theorem*}
\label{basineqintro}
For any non-elementary Kleinian group we have
$$\delta \leq \dim_H \leq \h = \overline{\dim}_B.$$
\end{theorem*}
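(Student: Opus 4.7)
The chain of inequalities naturally splits into three parts of very different character, and I would address them in an order that isolates the genuinely new content.

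The middle inequality $\dim_H \leq \udB$ requires no Kleinian input: on any totally bounded subset of Euclidean space, Hausdorff dimension is dominated by the upper box-counting dimension. Since $\Lbd(G)$ is compact this is immediate and I would just quote it from a standard fractal-geometry reference.

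The first inequality $\delta \leq \dim_H$ is classical. I would quote the Bishop--Jones theorem, which asserts that $\dim_H(\Lbd_c(G)) = \delta$ for the conical (radial) part $\Lbd_c(G)\subseteq\Lbd(G)$; since $\dim_H$ is monotone, this immediately gives $\delta\le\dim_H$. Alternatively one constructs a Patterson--Sullivan $\delta$-conformal density supported on $\Lbd(G)$ and applies a global shadow argument / mass-distribution principle to bound $\dim_H\Lbd(G)$ from below by $\delta$. Either route is standard; I would merely cite it.

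The novel equality is $\h=\udB$, and this is where the real work lies. Using the definition given in the introduction, I would first rewrite $\h$ as the critical exponent $\Delta(X)$ of the extended Poincar\'e series $\sum_{x\in X}e^{-s\,d(0,x)}$, where $X\subset H(\Lbd(G))$ is any uniformly distributed (separated, net-like) set in the convex hull. The core of the argument is a two-sided shadow estimate: for a point $x\in X$ with $d(0,x)\in[R,R+1]$, the Euclidean shadow of a unit hyperbolic ball around $x$ on $\S^n$ has diameter comparable to $e^{-R}$. Because $X$ is a net inside $H(\Lbd)$ and every point of $\Lbd$ is the endpoint of a geodesic ray in $H(\Lbd)$, these shadows cover $\Lbd$ with bounded multiplicity. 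Consequently the counting function $N_X(R)=\card\{x\in X:d(0,x)\le R\}$ is comparable to the covering number $N(e^{-R})$ of $\Lbd$ by Euclidean balls of radius $e^{-R}$. Summing the Poincar\'e series in dyadic annuli then yields
\[
\Delta(X)=\limsup_{R\to\infty}\frac{\log N_X(R)}{R}=\limsup_{R\to\infty}\frac{\log N(e^{-R})}{R}=\udB(\Lbd(G)).
\]

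The main technical obstacle is making the two-sided shadow estimate work without geometrical finiteness hypotheses: I must ensure on one hand that the shadows of my chosen net $X$ honestly cover $\Lbd$ with bounded overlap (the upper bound on $\udB$ by $\h$), and on the other that every efficient Euclidean covering of $\Lbd$ by balls of radius $e^{-R}$ can be realised, up to multiplicative constants, by points of $X$ in the annulus $\{R\le d(0,\cdot)\le R+C\}\cap H(\Lbd)$ (the lower bound on $\h$ by $\udB$). The uniform distribution of $X$ inside the convex hull, together with convexity of $H(\Lbd)$ and the definition of $\h$ as using an $\varepsilon$-neighbourhood of $H(\Lbd)$ rather than $H(\Lbd)$ itself (which gives just enough thickness to absorb the multiplicative constants), should suffice to push through both inequalities uniformly.
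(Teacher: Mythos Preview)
Your proposal is correct and follows essentially the same route as the paper: both treat the first two inequalities by citation (Bishop--Jones for $\delta\le\dim_H$, standard fractal geometry for $\dim_H\le\udB$) and establish $\h=\udB$ by comparing the counting function of a uniformly distributed set $X\subset H(\Lambda)$ at hyperbolic distance $R$ with the covering number of $\Lambda$ at Euclidean scale $e^{-R}$. The paper carries out this comparison concretely via a dyadic decomposition in the upper half-space model, obtaining the explicit two-sided bound $3^{-n}N_k(\Lambda)\le N_k(X)\le 3^n N_k(\Lambda)$, which is exactly your shadow argument made coordinate-explicit.
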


Here, only the last equality is new, while the middle inequality 
$\dim_H \leq \overline{\dim}_B$ is true in general
and the first inequality is 
well-known as a consequence of Bishop and Jones \cite{bj97}. 
Thus, the Hausdorff dimension of the limit set is naturally bounded 
below and above by the critical exponent and the convex core entropy, 
respectively. The main goal of this paper is to investigate for what classes of 
Kleinian groups the above inequalities become strict and when they are 
equalities. For instance, in the case of non-elementary geometrically finite 
Kleinian groups, it is well-known that $\delta = \dim_H$, and also that 
$\dim_H = \dim_B$ (see Stratmann and Urba{\'n}ski \cite{stur96}), 
thus making all four invariants coincide (Proposition \ref{geomfiniteequal}).
Moreover, when a Kleinian group is acting on the $3$-dimensional hyperbolic 
space $\B^3$, Bishop \cite{bis96}, \cite{bis97} has shown $\delta=\dim_B$ 
if it is non-elementary and analytically finite and if
its limit set has null $2$-dimensional measure (see also \cite{bj97}).
The arguments in \cite{bis97} use the so-called Whitney exponent of a closed
set in the plane (see also \cite{bt54} and \cite{tri81}), which is related in 
spirit to our convex core entropy but is defined in the boundary of hyperbolic 
space, as opposed to the interior (see also our remark after 
Theorem~\ref{basicinequality}).
Taking the recent solution of the Ahlfors measure conjecture (which follows 
from the Tameness Conjecture proven in \cite{ag04} and \cite{cg06}) into 
consideration, the above results imply that all finitely generated 
non-elementary Kleinian groups acting on $\B^3$ satisfy $\delta=\dim_B$.
We shall state partial results on characterising the occurrence of
strict inequalities or equalities between the invariants from
Theorem~\ref{basineqintro}, and will also produce interesting classes of 
examples to further motivate conjectures emerging from such partial results.

The following result, formulated as Theorem~\ref{lwr-bd-cx-hull-entr}, is
the first natural observation on $\h$, and is reminiscent of the 
well-known fact that $\delta \geq n/2$ when the group contains a
parabolic element of maximal rank $n$.

\begin{theorem*}
For any Kleinian group with the property that the convex
hull of its limit set contains some horoball, we have $\h \geq n/2$.
\end{theorem*}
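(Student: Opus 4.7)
The plan is to exploit the characterisation of $h_{\mathrm{c}}$ as an exponential volume growth rate (or, equivalently, as the critical exponent $\Delta(X)$ of the extended Poincar\'e series over a uniformly distributed set $X\subset H(\Lbd(G))$) and to observe that a horoball alone already forces growth rate $n/2$. Let $B\subset H(\Lbd(G))$ be a horoball based at some $\xi\in\partial\B^{n+1}$ and let $o$ be any basepoint lying on the horosphere $\partial B$.

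Next I would estimate from below the hyperbolic volume of $B\cap B_R(o)$, where $B_R(o)$ is the hyperbolic ball of radius $R$ about $o$. Passing to the upper half-space model, I normalise $\xi$ to $\infty$ so that $B=\{y\ge 1\}$ and $o=(0,1)$, with volume element $dx\,dy/y^{n+1}$. A point $(x,y)$ lies in $B_R(o)$ precisely when $|x|^2+(y-1)^2\le 2y(\cosh R-1)$, so for each fixed $y\in[1,e^R]$ the slice is a Euclidean ball of radius $\asymp y^{1/2}e^{R/2}$. A direct computation gives
\begin{equation*}
\vol(B\cap B_R(o))\;\asymp\;\int_1^{e^R}\frac{y^{n/2}e^{nR/2}}{y^{n+1}}\,dy\;\asymp\; e^{nR/2}.
\end{equation*}
Since $B\subset H(\Lbd(G))$, the same lower bound holds for $\vol(H(\Lbd(G))\cap B_R(o))$, and hence for its $\varepsilon$-neighbourhood. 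Taking the exponential growth rate as $R\to\infty$ yields $\h\ge n/2$.

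Alternatively, and perhaps cleaner in the paper's framework, I would pick a uniformly distributed set $Y$ inside $B$ (for instance, an $\varepsilon$-separated net). The volume estimate above shows $\card\{y\in Y:d(o,y)\le R\}\asymp e^{nR/2}$, whence the extended Poincar\'e series $\sum_{y\in Y}e^{-s\,d(o,y)}$ diverges for every $s<n/2$; since $Y$ can be extended to a uniformly distributed subset $X$ of $H(\Lbd(G))$, one obtains $\h=\Delta(X)\ge\Delta(Y)\ge n/2$.

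The main obstacle is really only bookkeeping: one must check that the definition of $\h$ used in the paper genuinely reduces to either the volume growth rate of $H(\Lbd(G))\cap B_R(o)$ or to the critical exponent of an extended Poincar\'e series on a uniformly distributed subset, so that restricting attention to the horoball gives a \emph{lower} bound on $\h$. Once that reduction is made, the argument is just the standard horoball volume computation, and no properties of $G$ (such as the presence of a maximal-rank parabolic element) are actually needed; the assumption on the convex hull is used directly and geometrically.
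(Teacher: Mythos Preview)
Your proposal is correct and follows essentially the same route as the paper. The paper isolates the horoball computation as a separate lemma (Lemma~\ref{horoballentropy}), showing that $\lim_{R\to\infty}\frac{\log\vol(B_R(z)\cap D)}{R}=\tfrac{n}{2}$ for any horoball $D$, and then observes that the theorem is an immediate corollary since $D\subset H_\varepsilon(\Lambda)$; your argument merges these two steps but performs the same volume estimate in the upper half-space model. One small point: your slice-radius estimate $\asymp y^{1/2}e^{R/2}$ degenerates as $y\to e^R$, but since you only need the lower bound and the integral $\int_1^\infty y^{-n/2-1}\,dy$ already converges, restricting to (say) $y\in[1,e^R/2]$ suffices and nothing is lost.
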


Another question that arises in connection with $\h$ is under what
circumstances we have $\h < n$. Recall that the classical volume
entropy of any $(n+1)$-dimensional hyperbolic manifold is equal to $n$,
and thus $\h = n$ for any Kleinian group of the first kind (i.e. with
the limit set being everything).
This can also be seen by the equality in Theorem~\ref{basineqintro}.
We consider non-elementary Kleinian groups with the property that any 
point within the convex hull of their limit set is a bounded distance 
$\ell \geq 0$ away from the boundary of the convex hull; we call this property
\emph{$\ell$-tight}. 
Notice that the tightness condition also implies 
that the group is of the second kind (i.e. not of the first kind).
It is satisfied for instance by any convex 
cocompact group of the second kind and any non-trivial normal subgroup thereof. 
Actually the tightness property can be relaxed in such a way that the resulting 
property is even true for any geometrically finite group of the second kind.
We call such Kleinian groups geometrically tight.
It has been shown in \cite{mat00} that for geometrically tight groups the strict 
inequality $\dim_H < n$ holds. Here, we show in Theorems~\ref{bdneighbouring} 
and \ref{porosityestimate} that the same holds for $\h=\udB$.
Note that tightness implies porosity of the limit set, and hence it is natural 
to have some relation to $\udB$.

\begin{theorem*}
There is a constant $s(n,\ell)<n$ depending only on $n$ and $\ell$ such that
any Kleinian group with $\ell$-tight convex core satisfies $\h < s(n,\ell)$. 
\end{theorem*}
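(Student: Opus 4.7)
The plan is to deduce the strict upper bound on $\h$ from the equality $\h=\udB$ of Theorem~\ref{basicinequality} via two steps: first, show that $\ell$-tightness of $H(\Lbd(G))$ forces the limit set $\Lbd(G)$ to be porous in $\S^n$ with a porosity constant $c=c(\ell)>0$ depending only on $\ell$; second, apply the classical fact that porous subsets of $\S^n$ have upper box-counting dimension strictly below $n$, with a quantitative upper bound depending only on $n$ and $c$.

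For the porosity step, fix $\xi\in\Lbd(G)$ and a small scale $r>0$. Choose a base point $o\in H(\Lbd(G))$ and slide along the geodesic ray from $o$ toward $\xi$ to a point $x_r\in H(\Lbd(G))$ whose shadow on $\S^n$ has spherical diameter comparable to $r$. By $\ell$-tightness there is $y_r\in\partial H(\Lbd(G))$ with hyperbolic distance $d(x_r,y_r)\leq\ell$, and at $y_r$ the convex set $H(\Lbd(G))$ admits a supporting hyperplane $\Pi_r$. The open half-space of $\B^{n+1}$ cut off by $\Pi_r$ on the side opposite $H(\Lbd(G))$ accumulates on $\S^n$ along an open spherical cap $C_r$; since $\Lbd(G)=\overline{H(\Lbd(G))}\cap\S^n$ we have $C_r\cap\Lbd(G)=\emptyset$. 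Working in an upper half-space model based at $\xi$, in which the hyperbolic $\ell$-ball around $x_r$ becomes a Euclidean ball of radius comparable to $r\sinh\ell$, direct inspection shows that $C_r$ contains a spherical ball of radius at least $c(\ell)\,r$ inside $B(\xi,r)$, with $c(\ell)>0$ depending only on $\ell$. This is precisely the porosity of $\Lbd(G)$, uniformly in $\xi$ and $r$.

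For the dimension step, I would invoke the classical porosity-dimension estimate: there is a function $s(n,c)<n$ such that every $c$-porous subset of $\S^n$ satisfies $\udB\leq s(n,c)$. The standard proof covers the set at scale $r$ by axis-aligned boxes of side $r$ and removes from each covering box a sub-box of side at least $cr$, which porosity forces to be disjoint from the set; iterating this reduction across dyadic scales yields a covering number bound $N(r)\leq(C/r)^{s(n,c)}$ with some $s(n,c)<n$. Setting $s(n,\ell):=s(n,c(\ell))$ and invoking $\h=\udB$ gives the desired inequality $\h<s(n,\ell)$.

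The main technical obstacle is the first step: one must bound the cap size $c(\ell)\,r$ from below by a function of $\ell$ alone, uniformly in $\xi$, $r$, and $G$. A naive estimate can degenerate because the supporting hyperplane $\Pi_r$ need not be orthogonal to the geodesic $o\xi$, so the cap $C_r$ could a priori be thin or located far from $\xi$ on $\S^n$. The way around this is to exploit M\"obius covariance to normalise to a model picture---say upper half-space with $\xi=\infty$ and $x_r=(0,r)$---in which the hyperbolic $\ell$-ball around $x_r$ is a Euclidean ball of universally bounded shape at scale $r$, and then check directly that any supporting hyperplane through a point of this ball cuts out a cap on $\S^n$ of Euclidean diameter at least an explicit multiple of $r$. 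Once this geometric lemma is in place, the rest of the argument is classical.
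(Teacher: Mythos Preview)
Your approach is essentially the same as the paper's: this statement is Theorem~\ref{porosityestimate}, whose proof also passes through the identity $\h=\udB$, shows that $\ell$-tightness of $H(\Lambda)$ forces $\Lambda$ to be $c$-porous with $c$ depending only on $\ell$, and then invokes the standard bound $\udB(\Lambda)\leq n-C_n c^n$. The only difference is in the execution of the porosity step: rather than your direct supporting-hyperplane construction, the paper argues by contrapositive---if $\Lambda$ fails to be $c$-porous at $x\in\Lambda$ and scale $r$, then $D_r(x)\times[cr,\infty)\subset H(\Lambda)$ contains a hyperbolic ball of radius $\tfrac12\log(1+2/c)$, contradicting $\ell$-tightness once $c=2e^{-2\ell}$---which sidesteps the cap-location issue you flag and yields the explicit constant $s(n,\ell)=n-C_n\,2^n e^{-2n\ell}$.
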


Next we investigate the question when $\delta < \dim_H$.
This question is particularly interesting, since it asks for what
hyperbolic manifolds the recurrent dynamics within the convex core
is smaller dimension-wise than the total dynamics.
The interpretation of $\delta$ as the dimension of recurrent dynamics
is due to Bishop and Jones~\cite{bj97} who have shown that for all
non-elementary Kleinian groups, $\delta$ coincides with the Hausdorff
dimension of the conical limit set. 
Brooks \cite{bro85} has shown that for any convex cocompact Kleinian group 
satisfying $\delta > n/2$ and any of its non-trivial normal subgroups, the 
bottoms of the $L^2$-spectra coincide if and only if the quotient is amenable.
Using this and the well-known correspondence between $\delta$ and
the bottom of the $L^2$-spectrum, we can formulate the following 
result, stated as Theorem~\ref{normalsubgroup} in the text.
In fact, a new proof of Brooks' result by Stadlbauer~\cite{sta11}
shows that the assumption $\delta > n/2$ can be dropped.

\begin{theorem*}
\label{normalsubgroup-intro}
Let $G$ be a convex cocompact Kleinian group and $N$ some non-trivial 
normal subgroup. 
Then, $\delta(N)<\h(\Lbd(N))(=\dim_H(\Lbd(N)))$
precisely when $G/N$ is non-amenable.
\end{theorem*}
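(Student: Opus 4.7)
The plan is to reduce the statement to Brooks's amenability criterion for the coincidence of the bottoms of the $L^2$-spectra of the hyperbolic manifolds $\B^{n+1}/G$ and $\B^{n+1}/N$, using the classical link between the critical exponent and the bottom of the spectrum.

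First I would observe that since $G$ is non-elementary and $N\triangleleft G$ is non-trivial, a standard argument yields $\Lbd(N)=\Lbd(G)$: the orbit of any point under $N$ is mapped into itself under conjugation by $G$, so the closure of its intersection with $\S^n$ is a non-empty closed $G$-invariant subset of $\Lbd(G)$ and therefore equals $\Lbd(G)$. Since $G$ is convex cocompact, Proposition~\ref{geomfiniteequal} gives $\delta(G)=\dim_H(\Lbd(G))=\h(\Lbd(G))$, and combining this with $\Lbd(N)=\Lbd(G)$ we obtain $\h(\Lbd(N))=\dim_H(\Lbd(N))=\delta(G)$. This already establishes the parenthetical equality in the statement and reduces the problem to characterising when the strict inequality $\delta(N)<\delta(G)$ holds.

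Next I would invoke the Elstrodt--Patterson--Sullivan formula relating the critical exponent to the bottom $\lambda_0$ of the $L^2$-spectrum of the Laplacian on the quotient manifold: $\lambda_0=\delta(n-\delta)$ when $\delta\geq n/2$ and $\lambda_0=n^2/4$ otherwise. Because $N\leq G$ implies $\delta(N)\leq\delta(G)$, in the regime $\delta(G)>n/2$ the equality $\delta(N)=\delta(G)$ is equivalent to $\lambda_0(\B^{n+1}/N)=\lambda_0(\B^{n+1}/G)$, and Brooks's theorem~\cite{bro85} states that this coincidence of spectral bottoms is equivalent to amenability of $G/N$. Contrapositively, $\delta(N)<\delta(G)=\h(\Lbd(N))$ precisely when $G/N$ is non-amenable.

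Finally I would handle the remaining regime $\delta(G)\leq n/2$, where the spectral translation saturates at $n^2/4$ and Brooks's original argument does not directly apply. Here I would cite Stadlbauer's recent work~\cite{sta11}, which reproves Brooks's theorem by purely dynamical/thermodynamic-formalism methods and yields the equivalence ``$\delta(N)=\delta(G)$ iff $G/N$ is amenable'' without any assumption on $\delta(G)$. This is the main obstacle of the proof: the correspondence between critical exponent and spectral gap is unconditional in only one direction for small $\delta$, and without Stadlbauer's strengthening one would be stuck with Brooks's hypothesis $\delta(G)>n/2$. With that ingredient in hand, combining the two cases completes the argument.
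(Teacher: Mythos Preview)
Your proof is correct and follows essentially the same route as the paper: establish $\Lbd(N)=\Lbd(G)$ so that $\h(\Lbd(N))=\dim_H(\Lbd(N))=\delta(G)$, and then reduce the dichotomy $\delta(N)<\delta(G)$ versus $\delta(N)=\delta(G)$ to Brooks's theorem, supplemented by Stadlbauer's strengthening to remove the hypothesis $\delta(G)>n/2$. The only cosmetic difference is that the paper invokes the identity $\h(\Lbd(G))=\Delta(X)=\delta(G)$ via Proposition~\ref{cc-entr-is-delta-X} with $X$ a $G$-orbit, whereas you cite Proposition~\ref{geomfiniteequal}; both are valid.
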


In the context of our work, this constitutes a first instance where 
$\delta < \dim_H = \h$. However, examples of Kleinian groups with
$\delta < \dim_H$ were first given by Patterson \cite{pat79}, \cite{pat83} 
(see also \cite{fs04} for further discussion and references). 
Theorem~\ref{normalsubgroup-intro} can be applied to a class of examples, 
also discussed in \cite{fs04}, where $G$ is a Schottky group
and $N$ is a normal subgroup of $G$ so that
$G/N$ is isomorphic to a Schottky subgroup $G_1$ of $G$ 
and to the isometry group of the manifold given by $N$. 
Therefore, assuming $G_1$ to be non-elementary and thus 
non-amenable ensures the existence of a dimension gap 
of the type $\delta < \dim_H = \h$ for $N$. 
Since the non-amenability of $G_1$, that is,
of its Cayley graph (which can be embedded in the convex core
of the manifold associated to $N$ by taking suitable edge lengths)
is essential in producing the dimension gap,
it is natural to attempt a generalisation of 
Theorem~\ref{normalsubgroup-intro} along
the following lines (Conjecture~\ref{non-amenab-conj}).
The main idea is that $N$ is not assumed to be a normal subgroup
of some group $G$, so that the non-amenability condition is imposed on a
discrete structure given by the pants decomposition of the 
hyperbolic surface given by $N$. 
This also means that, for this problem, we restrict our attention
to the two-dimensional case. Formulating a similar conjecture in
higher dimensions depends on finding a notion of manifold decomposition which
is in some way analogous to the pants decomposition of a hyperbolic surface.

\begin{conjecture*}
\label{non-amenab-conj-intro}
Let $S$ be a hyperbolic surface given by some Fuchsian group.
Assume $S$ is of infinite type with strongly bounded geometry, 
and let $\Gr$ be the graph associated to some 
uniform pants decomposition of $S$. 
Then, $\delta < \h$ if and only if $\Gr$ is non-amenable.
\end{conjecture*}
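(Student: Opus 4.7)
The plan is to transpose the proof scheme of Theorem~\ref{normalsubgroup-intro} from the normal-subgroup cover setting to the purely geometric pants-graph setting, using the strongly bounded geometry hypothesis to replace the role of a deck group action. In broad strokes, both $\h$ and $\delta$ should be expressible as coarse invariants attached to $\Gr$, with $\h$ captured by the exponential growth rate of $\Gr$ and $\delta$ by the spectral radius of an associated transfer operator; a Kesten-type amenability criterion would then supply the equivalence.

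First I would establish that $\h$ is determined by the coarse geometry of $\Gr$. Using the characterization of $\h$ as the critical exponent of an extended Poincar\'e series over a uniformly distributed set $X \subset H(\Lbd(G))$, I would take $X$ to be the orbit in $\B^2$ of one representative point chosen in each pants of the decomposition. Strongly bounded geometry together with uniformity of the pants forces hyperbolic distances on $X$ to be comparable to graph distances on $\Gr$, so that $\h$ coincides, up to a positive multiplicative constant, with the exponential growth rate of $\Gr$.

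Next I would identify $\delta$ with a spectral invariant of a transfer operator on $\ell^2(\Gr)$. Each geodesic ray in the convex core of $S$ passes successively through pants and thus gives rise to an infinite walk on $\Gr$; weighting transitions by the hyperbolic geometry of the pants crossings and invoking the Bishop-Jones identification of $\delta$ with the Hausdorff dimension of the conical limit set, one should express $\delta$ as the logarithm of the spectral radius of the corresponding transfer operator, in the spirit of Stadlbauer's proof of Brooks' theorem. A Kesten-type amenability criterion for this operator, asserting that spectral radius equals growth rate precisely when $\Gr$ is amenable, would then close the equivalence $\delta = \h$ if and only if $\Gr$ is amenable.

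The main obstacle lies in implementing the second step without the benefit of a group action. In the normal-subgroup setting of Theorem~\ref{normalsubgroup-intro}, the transfer operator on $\ell^2(G/N)$ is canonical and its spectrum is pinned to $\delta(N)$ by Brooks' spectral correspondence together with a Patterson-Sullivan density argument. For an arbitrary pants decomposition, the Markov shift lives on an alphabet with only coarse uniformity, so the identification of $\delta$ with the spectral radius must be carried out by hand using strongly bounded geometry, and careful estimates are needed to ensure that the transfer operator is well-behaved on $\ell^2(\Gr)$ and satisfies the required Kesten dichotomy. A secondary subtlety is that under strongly bounded geometry one may still have $\h<1$, for instance when $S$ is a $\Z$-cover of a Schottky pants, so the argument must work uniformly for $\h \in (0,1]$ rather than reducing to the simpler case $\h = 1$.
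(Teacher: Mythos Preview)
The statement you are attempting to prove is a \emph{conjecture} in the paper, not a theorem; the paper does not supply a proof. What the paper actually establishes are the partial results Theorem~\ref{amen-implies-equal} and Theorem~\ref{non-amen-equiv-delta<1}, obtained by an isoperimetric/Cheeger-constant argument rather than a spectral-radius argument: non-amenability of $\Gr$ is translated into a bound on $h^{\Pa}(S)$, then via Lemma~\ref{h*-hP} and Proposition~\ref{matsuzaki} into a bound on the Cheeger constant $h(S)$, and finally through the Cheeger--Buser inequalities (Theorem~\ref{estimates}) and the Elstrodt--Patterson--Sullivan formula (Theorem~\ref{els-pat-sul}) into $\delta(G)<1$. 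This route only compares $\delta$ to the threshold $1$, not to $\h$ itself, which is why the full equivalence remains open and why the paper can only conclude the corollary under the additional hypotheses $\h=1$ and $\Gr$ homogeneous.

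Your proposed strategy is precisely the direction the paper flags as the expected route to the full conjecture (see the paragraph following Theorem~\ref{non-amen-equiv-delta<1-intro} in the introduction, invoking Kesten and Stadlbauer). The genuine gap, which you yourself identify, is that no Kesten-type dichotomy is currently available for transfer operators on $\ell^2(\Gr)$ when $\Gr$ is merely the graph of a uniform pants decomposition rather than a Cayley graph: the equality of spectral radius and growth rate in the amenable case, and strict inequality otherwise, relies in the known proofs on group-invariance (or at least on a group-extended Markov structure as in \cite{sta11}, \cite{jae11}), and strongly bounded geometry alone does not obviously supply a substitute. A further issue is your first step: quasi-isometry between $(\Gr,d_{\Pa})$ and the convex core gives comparability of distances only up to multiplicative \emph{and additive} constants, so $\h$ and the exponential growth rate of $\Gr$ are related by inequalities $c_1\,\mathrm{gr}(\Gr)\le \h \le c_2\,\mathrm{gr}(\Gr)$ with $c_1<c_2$ in general; this slack already prevents you from deducing $\delta=\h$ from a hypothetical equality of spectral radius and growth rate on $\Gr$, so even the amenable direction would not close without sharper control.
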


As usual, a pair of pants is a complete hyperbolic surface with
geodesic boundary whose interior is homeomorphic to the complement
of three points in the 2-sphere. A hyperbolic surface has \emph{strongly 
bounded geometry} if it admits a pants decomposition such that the 
lengths of the decomposing closed geodesics are uniformly bounded away 
from $0$ and $\infty$. Such a pants decomposition is called \emph{uniform}. 
The graph $\Gr$ associated to a pants decomposition is defined naturally 
by letting the pants be vertices and connecting two vertices by an
edge whenever the involved pants are adjacent. 
Due to the method of proof that we use, involving Cheeger constants of
surfaces and isoperimetric constants of graphs, we can only partially
address Conjecture~\ref{non-amenab-conj-intro}, obtaining the following
results stated as Theorem~\ref{amen-implies-equal} and 
Theorem~\ref{non-amen-equiv-delta<1} further down.
The assumptions are the same as in Conjecture~\ref{non-amenab-conj-intro}.

\begin{theorem*}
\label{amen-implies-equal-intro}
If $\Gr$ is uniformly strongly amenable, then all $\delta$, $\dim_H$ and $h_c$
are either equal to $1$ or less than $1$ at the same time.
\end{theorem*}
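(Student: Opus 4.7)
By Theorem~\ref{basineqintro} and the fact that the volume entropy of $\H^2$ is $1$, we already have $\delta\le\dim_H\le\h\le 1$. Hence the dichotomy "all three equal $1$ or all three strictly less than $1$" reduces to the single implication $\delta<1\Rightarrow\h<1$, or contrapositively $\h=1\Rightarrow\delta=1$. The plan is to establish, under the amenability hypothesis, the stronger unconditional conclusion $\delta=1$; monotonicity of the basic inequality then immediately yields $\dim_H=\h=1$, which is a fortiori enough.

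The first step is to translate each invariant into an intrinsic spectral or isoperimetric datum. By the Elstrodt--Patterson--Sullivan formula for Fuchsian groups, $\delta=1$ if and only if the bottom $\lambda_0(S)$ of the $L^2$-spectrum of the Laplace--Beltrami operator on $S$ vanishes. Combining Cheeger's inequality $\lambda_0\ge h(S)^2/4$ with a Buser-type converse $\lambda_0\lesssim h(S)$ obtained by smoothing indicator functions of subsurfaces realising small isoperimetric ratio (applicable here because strongly bounded geometry supplies a uniform positive lower bound on the injectivity radius of $S$, which is what the cut-off construction needs), one obtains $\lambda_0(S)=0\Leftrightarrow h(S)=0$, where $h(S)$ is the Cheeger isoperimetric constant of $S$.

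Next I would compare $h(S)$ with the edge-isoperimetric constant $\iota(\Gr)$ of the pants graph, using the uniform pants decomposition. The two-sided bounds on the area of each pants and on the length of each decomposing curve imply that, for any finite vertex set $K\subset V(\Gr)$, the pants-aligned subsurface $U_K:=\bigcup_{P\in K}P$ satisfies $\mathrm{area}(U_K)\asymp|K|$ and $\mathrm{length}(\partial U_K)\asymp|\partial K|$, with implicit constants depending only on the uniformity of the decomposition. Since the surface is of infinite type with no funnels, every decomposing curve is shared between exactly two pants and the comparison is tight on both sides, so $h(S)\asymp\iota(\Gr)$ and in particular $h(S)=0\Leftrightarrow\iota(\Gr)=0$. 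Uniform strong amenability of $\Gr$ forces $\iota(\Gr)=0$, and chaining the equivalences $\delta=1\Leftrightarrow\lambda_0(S)=0\Leftrightarrow h(S)=0\Leftrightarrow\iota(\Gr)=0$ delivers $\delta=1$, hence $\delta=\dim_H=\h=1$.

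The main obstacle is the surface-to-graph direction of the isoperimetric comparison. The upper bound $h(S)\lesssim\iota(\Gr)$ is direct: a F{\o}lner sequence in $\Gr$ lifts to pants-aligned subsurfaces whose isoperimetric ratio tends to zero. The reverse inequality $h(S)\gtrsim\iota(\Gr)$ is the delicate one, since one must argue that any finite-area $U\subset S$ realising a small isoperimetric ratio can be replaced by a pants-aligned set of comparable ratio. I expect this approximation step --- snapping $\partial U$ to the nearest decomposing curves without inflating $\mathrm{length}(\partial U)/\mathrm{area}(U)$ by more than a bounded factor --- to be the technical heart of the proof, and the "uniformly strongly" qualifier on the amenability hypothesis to be used precisely here, guaranteeing that the F{\o}lner sets in $\Gr$ can be chosen with enough regularity (for instance connected, with bounded-multiplicity boundary) to make the approximation quantitative in terms of the strongly bounded geometry constants.
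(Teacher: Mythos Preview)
Your reduction to ``$\h=1\Rightarrow\delta=1$'' is correct, but your plan to prove the stronger unconditional statement $\delta=1$ is not: that statement is false. Take $G$ to be a $\Z$-cover of a convex cocompact Fuchsian group of the second kind; then $\Gr$ is (a line, hence) uniformly strongly amenable, yet $\delta(G)<1$ and $\h(\Lambda(G))<1$. So the ``all less than $1$'' branch of the dichotomy genuinely occurs.

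The flaw in your argument is the assertion that $S$ has no funnels and that every decomposing curve is shared by two pants. The surface $S$ may be of the second kind, and a pair of pants $P\in\Pa$ can have boundary curves lying on $\partial C(S)$. Such curves contribute to $\ell(\partial U_K)$ but are invisible to $|\partial K|$, since there is no vertex of $\Gr$ on the other side. Thus your comparison $\ell(\partial U_K)\asymp|\partial K|$ fails, and a F{\o}lner sequence in $\Gr$ need not yield subsurfaces of small isoperimetric ratio. In the $\Z$-cover example above, intervals $K=[-n,n]$ have $|\partial K|=2$, but $U_K$ has roughly $2n$ boundary curves along $\partial C(S)$, so $\ell(\partial U_K)/A(U_K)$ stays bounded away from $0$.

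The paper handles this with a case distinction. If $H(\Lambda(G))$ is weakly tight, then $\h<1$ by Theorem~\ref{bdneighbouring}, and we are done. If not, there are points $z_i\in C(S)$ (outside cusp neighbourhoods) with $d(z_i,\partial C(S))\to\infty$. This is where ``uniformly strongly'' is actually used: for any $\varepsilon>0$ it supplies a single bound $m$ and, for each $i$, a connected F{\o}lner set $K_i$ containing the vertex of the pants holding $z_i$ with $|K_i|\le m$ and $|\partial K_i|/|K_i|<\varepsilon$. Because $|K_i|\le m$ and the pants have uniformly bounded diameter, the corresponding subsurface $W_i^\sharp$ has uniformly bounded diameter; since $d(z_i,\partial C(S))\to\infty$, eventually $W_i^\sharp$ lies entirely in the interior of $C(S)$, so its boundary consists only of curves accounted for by $\partial K_i$. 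Then $A(W_i^\sharp)/\ell(\partial W_i^\sharp)>\pi/(C\varepsilon)$, whence $h(S)=\infty$ (paper's convention), $\lambda_0(S)=0$, and $\delta=1$.

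So the role of ``uniformly strongly'' is not the snapping step you anticipated (that step, via Proposition~\ref{matsuzaki} and Lemma~\ref{h*-hP}, is used in the proof of Theorem~\ref{non-amen-equiv-delta<1}, not here), but rather to centre the F{\o}lner sets at arbitrary vertices with a uniform cardinality bound, so they can be pushed deep into the convex core and kept away from $\partial C(S)$.
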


\begin{theorem*}
\label{non-amen-equiv-delta<1-intro}
If $\Gr$ is non-amenable, then $\delta < 1$.
\end{theorem*}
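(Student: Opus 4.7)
The plan is to prove the chain
\[
\Gr\text{ non-amenable}\ \Longrightarrow\ h(S)>0\ \Longrightarrow\ \lambda_0(S)>0\ \Longrightarrow\ \delta<1,
\]
where $h(S)$ denotes the Cheeger constant of $S$ and $\lambda_0(S)$ the bottom of the $L^2$-spectrum of the Laplace--Beltrami operator on $S$. The second implication is Cheeger's inequality $\lambda_0(S)\geq h(S)^2/4$, and the third is the Elstrodt--Patterson--Sullivan identity, namely $\lambda_0(S)=\delta(1-\delta)$ when $\delta\geq 1/2$ and $\lambda_0(S)=1/4$ otherwise. Both are classical, so all of the new content lies in the first implication.

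Strongly bounded geometry supplies two uniform inputs. First, each pants $P$ of the decomposition has area in a fixed interval $[\alpha,\beta]$ with $\alpha>0$. Second, and more importantly, there are uniform \emph{relative} Cheeger constants $c_1,c_2>0$ such that, for every pants $P$, every adjacent pair $P\cup P'$, and every measurable subset $B$,
\[
|\partial B\cap\mathrm{int}(P)|\geq c_1\min(|B|,|P|-|B|),
\]
with the analogous inequality for $P\cup P'$ and constant $c_2$. Both follow from positivity of the first Neumann eigenvalue on any compact Riemannian $2$-manifold with smooth boundary, combined with compactness of the moduli space of pants (and of glued pairs, where the twist parameter lives in a circle) whose boundary lengths lie in the prescribed range.

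Given a test region $A$ of finite area with smooth boundary, set $a_P:=|A\cap P|$ and sort the pants into
\[
V^-=\{P:a_P\leq|P|/4\},\quad V^0=\{P:|P|/4<a_P<3|P|/4\},\quad V^+=\{P:a_P\geq 3|P|/4\}.
\]
At least one of $\sum_{V^-}a_P$, $\sum_{V^0}a_P$, $\sum_{V^+}a_P$ is $\geq|A|/3$. In the $V^-$ case, the single-pants relative Cheeger inequality (with the minimum attained by $a_P$) summed over $P\in V^-$ yields $|\partial A|\geq c_1|A|/3$. In the $V^0$ case, each contributing pants gives $|\partial A\cap\mathrm{int}(P)|\geq c_1\alpha/4$, and since $|V^0|\geq|A|/(3\beta)$, one obtains $|\partial A|\geq c_1\alpha|A|/(12\beta)$.

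The main case is $V^+$, which is where the graph hypothesis enters. Set $W:=V^+$; then $|W|\geq|A|/(3\beta)$, and non-amenability of $\Gr$ yields $|\partial_E W|\geq h(\Gr)|W|$ for the edge boundary. For each $e=(P,P')\in\partial_E W$, the conditions $P\in V^+$ and $P'\notin V^+$ force $|A\cap(P\cup P')|\geq 3\alpha/4$ and $|(P\cup P')\setminus A|\geq\alpha/4$, so the relative Cheeger inequality for the pair gives $|\partial A\cap\mathrm{int}(P\cup P')|\geq c_2\alpha/4$. Summing over $e\in\partial_E W$ and using the degree bound $\deg_{\Gr}\leq 3$ to control overcounting of the contribution from any single pants, one obtains $|\partial A|\geq c'|\partial_E W|\geq(c'h(\Gr)/(3\beta))|A|$ for some uniform $c'>0$. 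Combining the three cases furnishes a uniform constant $c>0$ with $h(S)\geq c$, closing the chain. The principal obstacle I anticipate is securing the uniform pair Cheeger constant $c_2$: although each glued pair is compact with smooth boundary, the gluing involves a twist parameter, and one must argue that the relative Cheeger constant depends continuously on all the moduli so that compactness of the parameter space genuinely delivers a uniform lower bound.
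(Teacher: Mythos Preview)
Your argument is essentially correct and reaches the same conclusion through the same high-level chain ($\Gr$ non-amenable $\Rightarrow$ Cheeger constant positive $\Rightarrow$ $\lambda_0>0$ $\Rightarrow$ $\delta<1$), but the way you establish the first implication is genuinely different from the paper's.

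The paper does not attack arbitrary test regions directly. Instead it introduces two auxiliary isoperimetric constants: $h^*(S)$, where the infimum is over \emph{geodesic} domains, and $h^{\Pa}(S)$, where the infimum is over finite connected unions of pants from $\Pa$. It then invokes (from the literature, Fern\'andez--Rodr\'{\i}guez and an earlier paper of the second author) the reduction $h(S)\leq h^*(S)+1$, and proves an elementary lemma $h^*(S)\leq b\,h^{\Pa}(S)$. Finally, for a union $W^\sharp$ of pants with associated subgraph $K$, Gauss--Bonnet gives $A(W^\sharp)=2\pi|K|$ exactly, and the uniform lower bound on boundary lengths gives $\ell(\partial W^\sharp)\geq c|\partial K|$, so $h^{\Pa}(S)\leq(2\pi/c)\sup_K|K|/|\partial K|<\infty$ by non-amenability. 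This is considerably shorter: the hard analytic step (passing from smooth domains to geodesic ones) is imported, and what remains is purely combinatorial.

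Your approach is more self-contained---it does not rely on the geodesic-domain reduction---but pays for this with the uniform Neumann--Cheeger constants $c_1,c_2$ for pants and glued pairs. Two remarks on that. First, by Gauss--Bonnet every pair of pants has area exactly $2\pi$, so your $\alpha=\beta=2\pi$; you need not carry two constants. Second, and more importantly, the paper's definition of strongly bounded geometry explicitly allows pairs of pants containing cusps, so a pants need not be compact; your appeal to ``compact Riemannian $2$-manifold with smooth boundary'' and ``compactness of the moduli space'' requires adjustment. The fix is routine (finite-area hyperbolic pants with cusps still have positive Neumann Cheeger constant, and there are only finitely many topological types, each with compact moduli), but you should flag it alongside the twist-parameter issue you already identified.
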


Uniform strong amenability is a stronger notion of amenability for graphs which 
coincides with the usual amenability for Cayley graphs of finitely generated
groups (see Section~\ref{amenab-pants-section} for more details).
As a consequence of these two theorems, 
we obtain a partial answer to the problem posed in 
Conjecture~\ref{non-amenab-conj-intro}.
Under the assumptions of Conjecture~\ref{non-amenab-conj-intro},
if $\Gr$ is homogeneous and $\h=1$, then $\delta < \h = 1$ precisely when
$\Gr$ is non-amenable.
Here, a graph is called homogeneous if its automorphism group acts 
transitively. For homogeneous graphs all notions of amenability considered in
Section~\ref{amenab-pants-section} coincide, and again, 
Cayley graphs of finitely generated groups are homogeneous.

We provide further evidence for the validity of 
Conjecture~\ref{non-amenab-conj-intro} in the form of special classes 
of examples for which the statement of the conjecture holds.
In Section~\ref{examples-dim-gap} we provide examples of 
hyperbolic surfaces for which $\delta<\h=1$ and even 
$\delta < \h < 1$. The fundamental piece of our construction is a
pair of pants whose geodesic boundaries have sufficiently large length $\ell$, 
and suitably pasting copies of this one piece together makes tree-like 
hyperbolic surfaces. Then, as $\ell \to \infty$, the bottom of $L^2$-spectra 
of the surfaces tends to the maximum possible value, which implies that
$\delta \to 1/2$, whereas the number of pairs of pants within distance $R$ 
of some base point remains bounded from below
by $a^R$ for some universal constant $a$ larger than $1/2$.
The last condition implies that $\h \geq a$, and thus we get the 
difference between $\delta$ and $\h$.

In order to attack Conjecture~\ref{non-amenab-conj-intro} in its full generality
we expect that further developments of Kesten's work \cite{kes59} on random
walks on graphs (see also \cite{woe00}), as employed by 
Stadlbauer~\cite{sta11} to produce an alternative proof of
Brooks' theorem, will be more instrumental than attempting to modify 
Brooks' initial approach. Progress in this direction has also been made in 
\cite{jae11}, where group-extended Markov systems are considered.

The last part of this paper is concerned with the question when $\dim_H=\h$.
In light of the last equality of Theorem~\ref{basineqintro}, this of 
course translates to the question when $\dim_H = \udB$, and thus 
$\dim_H = \dim_B$, since the first equality trivially implies the existence 
of the box-counting dimension. 
This question has already been addressed both in the general fractal geometric 
context and specifically for Kleinian groups.
It is well-known that the two dimensions coincide for self-similar sets
(see e.g. \cite{fal90} or \cite{mati95} for details). Often, equality holds
when some degree of homogeneity of the considered set is ensured, or
when it supports a sufficiently regular measure, in the sense that
the measure of small balls is comparable to the radius raised to a certain
power, which turns out to be the common value of the two dimensions
(see \cite{sal91}, \cite{you82} and \cite[Theorem~5.7]{mati95}).
For a sufficient condition for equality without control on the actual value
of the dimension, see e.g. \cite{fal89}. Another large class of examples is
that of self-affine sets, for which the dimensions can differ \cite{mcm84},
\cite{lg92}, \cite{gl94}, \cite{kp96a}, \cite{kp96b}, but coincide in a certain 
`generic' sense \cite{fal88}, \cite{bu90}. The situation for limit sets of 
Kleinian groups we have already briefly discussed after 
Theorem~\ref{basineqintro}.

Here, however, we prefer the novel approach via the convex core entropy
$\h$ as it allows formulating a sufficient condition for the above equality 
within hyperbolic space, as opposed to the limit set itself.
We introduce the \emph{bounded type} condition for a uniformly distributed 
set $X$, stating that all cardinalities of intersections of $X$ with balls 
centred at points in $X$ are comparable, independently of centre and radius. 
As a weaker version of this condition, we also introduce the 
\emph{weakly bounded type} condition
by ignoring points of $X$ contained in a family of disjoint horoballs.
Then we develop a Patterson-Sullivan theory for uniformly distributed sets in 
convex hulls of closed sets in the boundary of hyperbolic space. 
This also includes a shadow lemma stated as Theorem~\ref{shadow} 
in the main text. These considerations enable us to prove the following 
result appearing as Theorem~\ref{hausdorff} later on.

\begin{theorem*}
\label{hausdorff-entropy-intro}
Let $\Lbd$ be a closed subset of $\mathbb S^n$ and assume 
there is a uniformly distributed set $X$ of bounded type
in the convex hull $H(\Lbd)$.
Then the $\Delta(X)$-dimensional Hausdorff measure of $\Lbd$ is positive.
In particular, 
$\dim_H (\Lbd) = \h(\Lbd)=\dim_B (\Lbd)$.
\end{theorem*}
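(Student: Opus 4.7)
The plan is to build a Patterson--Sullivan type measure $\mu$ on $\Lambda$ from the uniformly distributed set $X$, show via a shadow lemma that $\mu$ satisfies a $\Delta(X)$-dimensional upper mass estimate, and then apply the mass distribution principle to conclude that $H^{\Delta(X)}(\Lambda) > 0$. The remaining equalities in the theorem will then follow from the general inequality $\delta \le \dim_H \le h_c = \overline{\dim}_B$ stated in Theorem~\ref{basineqintro}, together with the fact that for a uniformly distributed $X \subset H(\Lambda)$ the critical exponent $\Delta(X)$ coincides with the convex core entropy $h_c(\Lambda)$.

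First, I would form, for each $s>\Delta(X)$, the normalised finite measure
\[
\mu_s \;=\; \frac{1}{\sum_{y \in X} e^{-s\, d(0,y)}} \sum_{x \in X} e^{-s\,d(0,x)}\, \delta_x
\]
supported on $X \cup \Lambda$ inside the closed unit ball. If the extended Poincar\'e series $\sum_{x} e^{-s d(0,x)}$ already diverges at $s=\Delta(X)$, one takes a weak-$\ast$ limit along $s \downarrow \Delta(X)$; otherwise one uses Patterson's classical slowly varying weight trick to force divergence. Because $X$ is uniformly distributed in $H(\Lambda)$ (so it has no interior accumulation point in $\mathbb{B}^{n+1}$), any such limit measure $\mu$ will be supported on $\Lambda$.

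Next comes the heart of the argument: I would invoke the shadow lemma (Theorem~\ref{shadow}) to establish that there is a constant $C \ge 1$ such that, for every $x \in X$,
\[
C^{-1}\, e^{-\Delta(X)\, d(0,x)} \;\le\; \mu\bigl(\mathrm{Sh}(x,r_0)\bigr) \;\le\; C\, e^{-\Delta(X)\, d(0,x)},
\]
where $\mathrm{Sh}(x,r_0)$ denotes the shadow from the origin of the hyperbolic ball of fixed radius $r_0$ about $x$. The bounded type hypothesis is crucial here: the two-sided uniform control of $\#(X \cap B(x,r))$ ensures that, once one sums the weights $e^{-sd(0,y)}$ over $y \in X$ that project into a given shadow, the total is comparable to $e^{-sd(0,x)}$ uniformly in $x$, with no pile-up of mass near cusps (which is why the bounded type, rather than the weaker weakly bounded type, is required for the full conclusion). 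Once this shadow estimate is in hand, standard hyperbolic geometry converts it into a Euclidean ball estimate on the sphere: for every $\xi \in \Lambda$ and every sufficiently small $r > 0$,
\[
\mu\bigl(B(\xi,r)\bigr) \;\le\; C'\, r^{\Delta(X)}.
\]

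Finally, the mass distribution principle (Frostman's lemma) yields
\[
H^{\Delta(X)}(\Lambda) \;\ge\; \mu(\Lambda)/C' \;>\; 0,
\]
proving the first assertion. Hence $\dim_H(\Lambda) \ge \Delta(X) = h_c(\Lambda)$, and combined with $\dim_H(\Lambda) \le h_c(\Lambda) = \overline{\dim}_B(\Lambda)$ from Theorem~\ref{basineqintro} and the elementary bound $\dim_H \le \underline{\dim}_B \le \overline{\dim}_B$, we conclude that $\dim_H(\Lambda) = h_c(\Lambda) = \dim_B(\Lambda)$. The principal obstacle is the proof of the two-sided shadow lemma: in the classical Patterson--Sullivan setting one exploits group invariance of the measure, which is unavailable here, so the bounded type condition on $X$ must do all the work of producing uniform local comparability between the weighted counting measure on $X$ and its weak-$\ast$ boundary limit.
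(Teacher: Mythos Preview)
Your proposal is correct and follows essentially the same route as the paper: construct a Patterson measure for $X$ (using Patterson's slowly varying weight if needed), apply the shadow lemma (Theorem~\ref{shadow}) together with the uniform distribution of $X$ to obtain the upper mass bound $\mu(B(\xi,r)) \le C' r^{\Delta(X)}$, and then invoke the mass distribution principle to get $H^{\Delta(X)}(\Lambda)>0$; the dimension equalities follow from Proposition~\ref{cc-entr-is-delta-X} and Theorem~\ref{basicinequality} exactly as you indicate. One small correction: Theorem~\ref{shadow} in the paper only establishes the \emph{upper} estimate $\mu_o(S(x,\ell)) \le A\,(\diam S(x,\ell))^\Delta$, not the two-sided bound you state---but since only the upper bound is needed for Frostman's lemma, your argument is unaffected.
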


Theorem~\ref{hausdorff-entropy-intro} implies that any Kleinian group of 
bounded type, that is, with the property that the convex hull of its limit set 
admits a uniformly distributed set of bounded type, satisfies 
$\dim_H = \h = \dim_B$.
As a consequence of Theorem~\ref{hausdorff-entropy-intro} 
(see Corollary~\ref{bounded-divergent}) we obtain that the Poincar\'e series
$P^s(X,o)=\sum_{x \in X}e^{-sd(x,o)}$ for a uniformly distributed set $X$ of 
bounded type is also of divergence type, meaning that
$P^s(X,o)$ diverges at the critical exponent $s=\Delta(X)$.
This raises the natural question whether the divergence type of the Poincar\'e 
series for $X$ already implies $\dim_H (\Lbd) = \h(\Lbd) = \dim_B (\Lbd)$.
Note, the divergence type property of a uniformly distributed set $X$ 
in the convex hull of the limit set of some Kleinian group $G$ is not to be 
confused with the divergence type property of $G$ itself.

If the bounded type condition is relaxed to a weak version 
(Definition~\ref{weaklybounded}), then we show in Theorem~\ref{wb-packing}
that the convex core entropy of the given closed set $\Lbd$ coincides
with its packing dimension $\dim_P(\Lbd)$. In particular, any Kleinian
group all of whose parabolic fixed points are bounded, and whose limit set gives
a uniformly distributed set of weakly bounded type in its convex hull, 
satisfies the assumptions of Theorem~\ref{wb-packing}
(see Corollary~\ref{wbimplydivergence}). Geometrically finite groups clearly
fall into this category.

At this point it is perhaps useful to remark that 
Theorem~\ref{hausdorff-entropy-intro} could also be obtained 
using ideas inspired by the way Bishop and Jones \cite{bj97} showed  
that for non-elementary Kleinian groups $\delta$ equals the 
Hausdorff dimension of the conical limit set, perhaps with the 
additional subtlety that the group structure, which is essential
in their argument, must be replaced with our bounded type condition
or similar. 
However, we preferred the approach via the Patterson-Sullivan 
theory of uniformly distributed sets, as it constitutes 
a useful tool for further developments.

Finally, Section~\ref{bd-type-cond} is devoted to the bounded type condition 
which, as seen above, implies $\dim_H=\h=\dim_B$ for Kleinian groups.
The condition holds for groups of the first kind and for convex cocompact
groups, but not for geometrically finite groups with parabolic elements.
The weakly bounded type condition can be used for including these groups.
We also observe that it is inherited to normal subgroups and that it is 
invariant under rough isometries. We propose the problem of asking whether 
it is even invariant under quasi-isometries. 
It is, however, possible to construct examples
of groups of bounded type which are not contained in any of the above 
categories. This construction uses the work of Eskin, Fisher and 
Whyte~\cite{efw} on the Diestel-Leader
graph; there is a locally finite graph whose automorphism group
acts transitively but which is \emph{not} quasi-isometric to any Cayley 
graph of a finitely generated group. In fact, we expect a large class 
of geometrically infinite groups to have the (weakly) bounded type property 
(Conjecture~\ref{bd-from-bdry-bd-type}).

\begin{conjecture*}
\label{bd-from-bdry-bd-type-intro}
A geometrically tight Kleinian group of divergence type is of 
weakly bounded type. 
\end{conjecture*}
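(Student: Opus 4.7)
The plan is to construct $X$ as (a maximal separated subset of) the $G$-orbit $G\cdot o$ intersected with the convex hull $H(\Lbd(G))$ with standard horoball neighbourhoods at the bounded parabolic fixed points removed, and then to invoke Patterson--Sullivan theory, with the divergence-type hypothesis supplying the key two-sided shadow estimate.

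For a geometrically tight group $G$, the convex hull $H(\Lbd(G))$ consists of a bounded neighbourhood of $G\cdot o$ together with finitely many $G$-orbits of precisely invariant horoballs $H_p$ at the bounded parabolic fixed points $p$. Setting
\[
X := (G\cdot o) \cap \bigl(H(\Lbd(G)) \setminus \textstyle\bigcup_{p} H_p\bigr),
\]
tightness guarantees that $X$ is uniformly distributed in this truncated convex hull. The divergence-type hypothesis produces a $\delta$-conformal Patterson--Sullivan density $\mu$ on $\Lbd(G)$ that is non-atomic and ergodic, and for which the shadow lemma (Theorem~\ref{shadow}, in its Sullivan-type variant for groups of divergence type) supplies the two-sided estimate
\[
\mu\bigl(\mathrm{Sh}_o(B(x,r_0))\bigr) \asymp e^{-\delta\, d(o,x)} \qquad (x\in X)
\]
for any sufficiently large but fixed radius $r_0$, with implicit constants depending only on $r_0$ and the tightness data.

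Granted this shadow estimate, the weakly bounded type property follows by a standard covering argument: for any $x\in X$ and any $R>r_0$, the shadows of an $r_0$-separated subset of $X\cap B(x,R)$ have controlled overlap and are all contained in $\mathrm{Sh}_o(B(x,R+r_0))$, whence
\[
\#\bigl(X\cap B(x,R)\bigr)\cdot e^{-\delta d(o,x)} \asymp \mu\bigl(\mathrm{Sh}_o(B(x,R+r_0))\bigr) \asymp e^{-\delta d(o,x)}.
\]
Hence $\#(X\cap B(x,R))$ is bounded above and below by constants depending only on $R$, which is precisely the weakly bounded type condition. The inheritance of divergence type by the Poincar\'e series of $X$, needed to rule out an exceptional loss at the critical exponent, also follows from this equivalence.

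The principal obstacle is the uniform lower bound in the shadow estimate throughout the thick part $H(\Lbd(G))\setminus\bigcup_p H_p$. In the convex cocompact case this is Sullivan's classical argument; in the geometrically finite case it reduces to the Stratmann--Urba{\'n}ski analysis \cite{stur96}, which is what justifies excising exactly the horoballs $H_p$. For geometrically tight but geometrically infinite groups neither framework transfers verbatim, and one must show that divergence type is strong enough to prevent the density $\mu$ from degenerating along deep radial excursions into the infinite-topology end of the convex core. It is here that we expect the random-walk techniques underlying Stadlbauer's proof of Brooks' theorem \cite{sta11} and the group-extended Markov system approach of \cite{jae11} to be decisive, via a symbolic coding of the thick part adapted to the tightness constant $\ell$.
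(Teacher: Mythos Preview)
This statement is presented in the paper as Conjecture~\ref{bd-from-bdry-bd-type}; it is an open problem, and the paper offers no proof. Your proposal therefore cannot be compared with a proof in the paper, and must be judged on its own merits as an attempted resolution of an open conjecture.

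There is a genuine gap at the very first step. You assert that for a geometrically tight group $G$ ``the convex hull $H(\Lbd(G))$ consists of a bounded neighbourhood of $G\cdot o$ together with finitely many $G$-orbits of precisely invariant horoballs''. This is precisely the characterisation of \emph{geometric finiteness}, not of geometric tightness. Geometric tightness (Definition preceding Theorem~\ref{bdneighbouring}) only says that every point of $H(\Lbd(G))$ outside a family of horoballs lies within distance $\ell$ of the \emph{boundary} $\partial H(\Lbd(G))$, not of the orbit $G\cdot o$. A normal subgroup $N$ of infinite index in a convex cocompact group $G$ of the second kind is geometrically tight (since $\Lbd(N)=\Lbd(G)$), yet the $N$-orbit of a point is certainly not coarsely dense in $H(\Lbd(N))=H(\Lbd(G))$: the quotient $H(\Lbd(G))/N$ has infinite diameter. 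Consequently your set $X=G\cdot o$ fails condition~(i) of the definition of a uniformly distributed set in exactly the cases the conjecture is about, and the rest of the argument never gets started.

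There is a second, related problem even if one replaces $X$ by a genuine uniformly distributed set in $H(\Lbd(G))$. The Patterson--Sullivan measure for $G$ is $\delta(G)$-conformal, while the relevant exponent for a uniformly distributed set is $\Delta(X)=\h(\Lbd(G))$, and the whole point of Sections~\ref{amenab-pants-section}--\ref{examples-dim-gap} is that these can differ. A shadow estimate at exponent $\delta(G)$ does not convert into a counting estimate for $X$ at the correct scale, so the covering argument you sketch does not yield the ratio bound in Definition~\ref{weaklybounded}. Your final paragraph effectively concedes that the argument collapses outside the geometrically finite case; what remains is not a proof but a restatement of why the conjecture is open.
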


Thus, it may well be that Fuchsian groups with the tightness 
property form the first class of groups beyond those we could 
treat here, which will be shown to satisfy 
Conjecture~\ref{non-amenab-conj-intro} in its full generality.

\medskip
\noindent
{\bf Acknowledgements.} The first author is indebted to Pekka Tukia for
the many interesting and eventually fruitful conversations on the idea of 
an extended Poincar\'e series and its critical exponent. 
We thank the referee of an earlier version of this manuscript for pointing out 
the connections to porosity and to the upper box-counting dimension. 
This has significantly increased the information content of our paper.

\section{Preliminaries}

Let $(\B^{n+1},d)$, $n\geq 1$, be the unit ball model of $(n+1)$-dimensional 
hyperbolic space with the hyperbolic distance $d$. Thus, the $n$-dimensional 
unit sphere $\S^n$ is the boundary at infinity of hyperbolic space. 
Let $\vol(\cdot)$ denote hyperbolic volume and $A(\cdot)$ hyperbolic area. 
The hyperbolic metric also allows computing lengths $\ell(\cdot)$ of curves in 
the usual way. \emph{Kleinian groups} are discrete subgroups of the group of 
orientation preserving isometries of hyperbolic space. The quotient
$M=M_G=\B^{n+1}/G$ of $(n+1)$-dimensional hyperbolic space through a torsion free
Kleinian group, that is, a group without elliptic elements, is an 
$(n+1)$-dimensional hyperbolic manifold. For ease of notation, we keep
the notation $d$ for the projected metric on $M$. If $n=1$, then $M$ is a 
hyperbolic surface and $G$ is referred to as a \emph{Fuchsian group}.
If $G$ is infinitely generated, we call the resulting hyperbolic surface
\emph{of infinite type}.

The \emph{limit set} $\Lbd(G)$ of a Kleinian group $G$ is the set of accumulation
points of an arbitrary $G$-orbit, and is a closed subset of $\S^n$. If $\Lbd(G)$
consists of more than two points, then it is uncountable and perfect, and
$G$ is called \emph{non-elementary}. Now let $\Lbd$ be some closed subset 
of $\S^n$. The hyperbolic convex hull of the union of all geodesics both
of whose end points are in $\Lbd$ is called the \emph{convex hull of} $\Lbd$, and
is denoted by $H(\Lbd)$. For a positive constant $\varepsilon >0$, the 
$\varepsilon$-neighbourhood of $H(\Lbd)$ is denoted by $H_\varepsilon(\Lbd)$.
When $\Lbd = \Lbd(G)$ for some Kleinian group $G$, the 
group acts discontinuously on the convex hull of $\Lbd(G)$, and the 
quotient $C(M_G) = H(\Lbd(G))/G$ is called the \emph{convex core} of 
$M_G=\B^{n+1}/G$.
Equivalently, the convex core is the smallest convex subset of $M_G$
containing all closed geodesics of $M_G$. A non-elementary 
Kleinian group $G$ is called \emph{convex cocompact} if the convex core
$C(M_G)$ is compact, and \emph{geometrically
finite} if some $\varepsilon$-neighbourhood 
$C_\varepsilon(M_G)=H_\varepsilon(\Lbd)/G$ of
the convex core has finite hyperbolic volume.

A point $\xi \in \Lbd(G)$ is a \emph{conical limit point} of $G$ if there exists
a ball $B$ in $\B^{n+1}$ such that for every geodesic ray $r$ towards
$\xi$ we have $g(B)\cap r \neq \emptyset$ for infinitely many distinct 
$g \in G$. The set of all conical limit points of $G$ is called the
\emph{conical limit set} and is denoted by $\Lbd_c(G)$.

A \emph{horoball} is an $(n+1)$-dimensional open ball in $\B^{n+1}$ whose
bounding sphere, called \emph{horosphere}, is tangent to $\S^n$. If $p$
is the fixed point of some parabolic element of the Kleinian group $G$,
then any horoball at $p$ is invariant under the stabiliser 
$\mathrm{Stab}_G(p)$ of $p$ in $G$. Moreover, there is a horoball $D_p$
at $p$ so that $D_p \cap g(D_p) = \emptyset$ for all 
$g \in G \setminus \mathrm{Stab}_G(p)$. Clearly, any other horoball contained
in $D_p$ also has this property. The projection of such a horoball
$D_p$ to $M=\B^{n+1}/G$ is called a \emph{cusp neighbourhood}. 
We can always find a cusp neighbourhood of volume one
for each $p$, which is called the canonical cusp neighbourhood.
We say that $p$ is a \emph{bounded parabolic fixed point} of $G$ if
the intersection of $C(M)$ with $\partial D_p / \mathrm{Stab}_G(p)$ is
compact, where $\partial D_p$ denotes the bounding horosphere of $D_p$.
A theorem of Beardon and Maskit \cite{bm74} asserts that $G$ is 
geometrically finite if and only if its limit set is the disjoint
union of $\Lbd_c(G)$ and the (at most countable) set of bounded parabolic
fixed points.

For a Kleinian group $G$ and points $x,z \in \B^{n+1}$, 
the \emph{Poincar{\'e} series} with exponent $s > 0$ is given by
$$
P^s(Gx,z) = \sum_{g \in G} e^{-s \, d(g(x),z)}.
$$
The \emph{critical exponent} $\delta=\delta(G)$ of $G$ is
\begin{eqnarray*}
\delta(G)&=&
\inf \, \{ s>0:P^s(Gx,z)<\infty\} \\
&=&\limsup_{R\to\infty} \, \frac{\log \card (B_R(z) \cap G(x))}{R}\, ,
\end{eqnarray*}
where $B_R(z)$ is the hyperbolic ball of radius $R$ centred at $z$ and
$\card(\cdot)$ denotes the cardinality of a set. By the triangle inequality,
$\delta$ does not depend on the choice of $x,z \in \B^{n+1}$. 
If $G$ is non-elementary, then $0<\delta\leq n$. 
Also, Roblin~\cite{rob02} showed that the above upper limit is in fact a limit.
$G$ is called \emph{of convergence type} if $P^\delta(Gx,z)<\infty$,
and \emph{of divergence type} otherwise.

A family of finite Borel measures $\{\mu_z\}_{z \in \B^{n+1}}$ on $\S^n$ 
is called \emph{$s$-conformal measure} for $s>0$ if $\{\mu_z\}$
are absolutely continuous to each other and, for each $z \in \B^{n+1}$
and $\xi \in \S^n$,
$$
\frac{d\mu_z}{d\mu_o} (\xi) = k(z,\xi)^s,
$$
where $k(z,\xi) = (1-|z|^2)/|\xi - z|^2$ is the \emph{Poisson kernel}
and $o$ the origin in $\B^{n+1}$.
Recall Patterson's construction \cite{pat76}, \cite{pat87} of 
$\delta$-conformal measures $\{\mu_z\}$ 
supported on the limit set $\Lbd(G)$ of a Kleinian group $G$,
which satisfy $g^*\mu_{g(z)}=\mu_z$ for every $z \in \B^{n+1}$ and for every 
$g \in G$. This $G$-invariance property can be rephrased as follows:
for every $g \in G$ and any measurable 
$A \subset \S^n$ we have
$$
\mu_o(g(A)) = \int_A |g'(\xi)|^\delta d\mu_o(\xi).
$$
Here, $|g'(\xi)|$ is the unique positive number called the linear stretching 
factor so that $g'(\xi)/|g'(\xi)|$ is orthogonal.

For further fundamentals on Kleinian groups and hyperbolic manifolds
we refer the reader for instance to the books \cite{bea83}, \cite{mas89}, 
\cite{rat94}, \cite{mt98}, \cite{kap01} and \cite{nic89}.

We assume familiarity with the notion of Hausdorff dimension
and packing dimension of a set in $\R^n$. 
The latter is defined in a similar manner to the Hausdorff dimension,
just using packings of disjoint balls centred in the given set instead of
coverings of the set (for further details see \cite{fal90} or \cite{mati95}).
We only recall that the \emph{upper box-counting dimension}
(also referred to as \emph{upper Minkowski dimension}) of $\Lbd \subset \R^n$ 
is given by
$$
\udB(\Lbd) = \limsup_{\varepsilon \to 0} 
\frac{\log N_\varepsilon(\Lbd)}{-\log \varepsilon},
$$
where $N_\varepsilon(\Lbd)$ is the smallest number of sets of diameter
at most $\varepsilon$ that cover $\Lbd$, or the largest number of
disjoint balls of radius $\varepsilon$ with centres in $\Lbd$. If 
the \emph{lower box-counting dimension} $\underline{\dim}_B(\Lbd)$ of $\Lbd$, obtained by replacing the 
limit superior with a limit inferior in the above definition, coincides with
$\udB(\Lbd)$, then the \emph{box-counting dimension} (also called
\emph{Minkowski dimension}) of $\Lbd$ exists
and is denoted $\dim_B(\Lbd)$ 
(see e.g. \cite{fal90} for more details).

The upper box-counting dimension is related to the porosity of a set.
We say that a subset $\Lbd \subset \R^n$ is {\it $c$-porous} if there is a 
constant $c>0$ so that for each $x \in \Lbd$ and all $r > 0$ the ball $D_r(x)$ 
of centre $x$ and radius $r$ contains a point $y$ with the Euclidean 
distance $d\,_{\R^n}(y,\Lbd) \geq cr$.
If $\Lbd$ is $c$-porous, then
$$
\overline{\dim}_B (\Lbd) \leq n-C_nc^n
$$
for some constant $C_n>0$ depending only on $n$
(see e.g. \cite{kr97}).

As usual, the limsup set of a countable collection $(A_i)_{i \in \N}$ of sets 
consists of all points which lie in infinitely many member sets of the given 
collection:
$$
\limsup \{ A_i : i\in\N \} = 
\{ x : x\in A_i \mbox{ for infinitely many } i\in\N \}.
$$

A homeomorphism $f:X \to X$ of a metric space $(X,d)$ into itself is called a 
{\em $K$-quasi-isometry} if there exist constants $K \geq 1$ and $L \geq 0$ 
such that
$$
\frac{1}{K}d(x,y)-\frac{L}{K} \leq d(f(x),f(y)) \leq K d(x,y)+L
$$
for any $x,y \in X$. A $K$-quasi-isometry is called a 
{\em rough isometry} if $K=1$.

\section{Critical exponent, Hausdorff dimension and convex core entropy}
\label{3}

For a compact Riemannian manifold $(X,g)$ the volume entropy is defined as
$$ 
\lim_{R \to \infty} 
\frac{\log \vol_g\,B_R(z)}{R},
$$
where $B_R(z)$ is the ball of radius $R$ centred at some $z \in \widetilde{X}$,
with $\widetilde{X}$ the universal cover of $X$, and $\vol_g$ is the 
volume element induced by the Riemannian metric $g$.
It is well-known that in this case the limit exists and is independent 
of the choice of $z \in \widetilde{X}$. Also, it is clear that for 
any (not necessarily compact) hyperbolic manifold
$\B^{n+1}/G$, with $G$ some Kleinian group, the volume entropy just
equals $n$. In the context of our work it is, however, more useful
to consider the following notion of volume entropy which takes into account 
the convex hull of the limit set $\Lbd(G)$ of $G$.

\begin{definition}
For a closed set $\Lambda$ in $\S^n$,
we define the \emph{convex core entropy} as
$$
\h(\Lbd) = \limsup_{R \to \infty} 
\frac{\log \vol\,(B_R(z) \cap H_\varepsilon(\Lbd))}{R} ,
$$
where  
$B_R(z)$ is the ball of centre $z$ and radius $R$ 
in $\B^{n+1}$. For a hyperbolic manifold $M_G=\B^{n+1}/G$, we define 
the convex core entropy $\h(M_G)$ to be $\h(\Lbd(G))$.
\end{definition}

The definition is independent of the choice of $z \in \B^{n+1}$ 
and of a positive constant $\varepsilon>0$. The latter is seen from
Proposition \ref{cc-entr-is-delta-X} below.
The simplest reason why we need to take the $\varepsilon$-neighbourhood is that,
in the degenerate case such as a Fuchsian group viewed as a Kleinian group,
the volume of the convex hull is zero and there is no use considering 
its entropy. In the other case,
by replacing $H_\varepsilon(\Lbd)$ with $H(\Lbd)$, we can define the `exact' 
convex core entropy, which is of course not greater than $\h(\Lbd)$, but 
a priori we do not know whether they are equal or not.
If $H(\Lbd)$ does not have any thin part such as a cusp or an arbitrarily 
narrow strait, then the two values are the same.
From the definition, we also have that 
$$
0 \leq \h(\Lbd) \leq \h(\S^n) = n
$$ 
for every closed set $\Lbd \subset \S^n$.

\begin{definition}
We call a discrete set $X=\{x_i\}_{i=1}^{\infty}$ in the convex hull 
$H(\Lbd) \subset \B^{n+1}$ \emph{uniformly distributed} if the following two 
conditions are satisfied:
\begin{itemize}
\item [(i)]
There exists a constant $M<\infty$ such that, for every point 
$z \in H(\Lbd)$, there is some $x_i \in X$ such that $d(x_i,z)\leq M$;
\item [(ii)]
There exists a constant $m>0$ such that, any distinct points 
$x_i$ and $x_j$ in $X$ satisfy $d(x_i,x_j) \geq m$.
\end{itemize}
\end{definition}

\begin{remark*}
In any convex hull $H(\Lbd) \subset \B^{n+1}$, we can always choose some 
uniformly distributed set. For example, consider the orbits of several points 
under some cocompact Kleinian group acting on $\B^{n+1}$ 
(or on $\B^{k} \subset \B^{n+1}$ in the degenerate case)
and take the intersection with $H(\Lbd)$. This gives uniformly distributed 
points in a thick part of $H(\Lbd)$. If we choose sufficiently many orbit 
points, they are distributed in a large portion of $H(\Lbd)$ and
the rest of $H(\Lbd)$ is a thin part of simple structure. 
Then we can fill in points suitably `by hand' in the thin part
to obtain a uniformly distributed set in $H(\Lbd)$.
In the upper half-space model $\H^{n+1}$, we may also use the set of dyadic 
points $\{c_k(m_1,\ldots,m_n)\}$ as in the proof of 
Theorem~\ref{basicinequality} below instead of the orbit points.
We propose the problem of existence of uniformly distributed sets in the
more general setting of metric spaces.
\end{remark*}

For a Kleinian group $G$ acting on $\B^{n+1}$, the critical exponent 
$\delta(G)$ is defined to be the infimum of all exponents $s>0$ such that the 
Poincar\'e series $\sum_{g \in G}e^{-s\,d(g(x),z)}$ with $x,z \in \B^{n+1}$ 
converges. Similarly, we define the critical exponent for a uniformly 
distributed set as follows.

\begin{definition}
For a uniformly distributed set $X$, we define the 
\emph{Poincar\'e series} with exponent $s>0$ and reference point 
$z \in \B^{n+1}$ by
$$
P^s(X,z)=\sum_{x \in X}e^{-s\,d(x,z)}.
$$
The \emph{critical exponent} for $X$ is
$$
\Delta = \Delta(X)=\inf \{s>0 \mid P^s(X,z)<\infty\}.
$$
The Poincar\'e series for $X$ (or more simply
$X$ itself) is of \emph{convergence type} if $P^\Delta(X,z)<\infty$, and of
\emph{divergence type} otherwise.
\end{definition}

As in the case of groups (see e.g. \cite{nic89}), it is not too difficult to
see that
$$
\Delta(X) = \limsup_{R \to \infty} \frac{\log \card (B_R(z) \cap X)}{R}
$$
for any $z \in \B^{n+1}$, where $\card (\cdot)$ denotes the cardinality of a set.

\begin{remark*}
For the orbit $X=Gz$ of a Kleinian group $G$, 
Roblin \cite{rob02} proved that the above 
limit superior is actually the limit. 
One can ask what condition ensures that this is also the case 
for our $\Delta(X)$ as well as for the convex core entropy $h_c(\Lambda)$.
Later, in Theorem~\ref{basicinequality}, we will see that $h_c(\Lambda)$ 
coincides with the upper box-counting dimension $\overline{\dim}_B(\Lbd)$. 
Thus the problem is transferred to asking when $\overline{\dim}_B(\Lbd)$ 
is equal to the lower box-counting dimension $\underline{\dim}_B(\Lbd)$.
\end{remark*}

\begin{proposition}
\label{cc-entr-is-delta-X}
Given a closed set $\Lbd \subset \S^n$, 
we have that
$$
\h(\Lbd) = \Delta(X)
$$
for any uniformly distributed set $X$ in the convex hull $H(\Lbd)$.
\end{proposition}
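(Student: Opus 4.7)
The plan is to establish the two inequalities $\h(\Lbd) \leq \Delta(X)$ and $\h(\Lbd) \geq \Delta(X)$ via a standard covering/packing argument that converts volume growth into orbit-counting growth, using the two defining properties of a uniformly distributed set. As a by-product, this shows that $\h(\Lbd)$ is independent of the auxiliary parameter $\varepsilon>0$, since $\Delta(X)$ involves no such parameter. The key quantitative facts I would invoke repeatedly are that a hyperbolic ball in $\B^{n+1}$ of fixed radius $r$ has volume bounded above and below by positive constants $V_r,v_r$ depending only on $r$ and $n$ (centre-independent).

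For the upper bound, I would first observe that every point of $H_\varepsilon(\Lbd)$ lies within hyperbolic distance $M+\varepsilon$ of some $x_i\in X$: given $y\in H_\varepsilon(\Lbd)$, pick $y'\in H(\Lbd)$ with $d(y,y')\leq\varepsilon$ and then $x_i\in X$ with $d(x_i,y')\leq M$ by property (i). Setting $M':=M+\varepsilon$, this gives
\[
B_R(z)\cap H_\varepsilon(\Lbd)\ \subseteq\ \bigcup_{x\in X\cap B_{R+M'}(z)} B_{M'}(x),
\]
so that
\[
\vol(B_R(z)\cap H_\varepsilon(\Lbd))\ \leq\ V_{M'}\cdot\card(X\cap B_{R+M'}(z)).
\]
Taking logarithms, dividing by $R$, and passing to the limit superior then yields $\h(\Lbd)\leq\Delta(X)$.

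For the lower bound, I would set $m':=\min(m/2,\varepsilon)$ and exploit property (ii) together with $X\subset H(\Lbd)$. The balls $B_{m'}(x_i)$, $x_i\in X$, are pairwise disjoint (since $d(x_i,x_j)\geq m\geq 2m'$), and each is contained in $H_{m'}(\Lbd)\subseteq H_\varepsilon(\Lbd)$. For $x_i\in X\cap B_{R-m'}(z)$ the ball $B_{m'}(x_i)$ also lies in $B_R(z)$ and has volume at least $v_{m'}$, so summing these disjoint contributions gives
\[
\vol(B_R(z)\cap H_\varepsilon(\Lbd))\ \geq\ v_{m'}\cdot\card(X\cap B_{R-m'}(z)),
\]
and again taking logs and $\limsup$ gives $\h(\Lbd)\geq\Delta(X)$.

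I do not expect a substantive obstacle, since conditions (i) and (ii) of uniform distribution are tailored precisely to supply the covering and packing data the argument needs. The one step requiring genuine care is the lower bound: one must ensure that the packing balls actually sit inside $H_\varepsilon(\Lbd)$, and this forces the truncation $m'=\min(m/2,\varepsilon)$ rather than the more natural $m/2$. A secondary point is to justify rigorously that $\Delta(X)$ admits the orbit-counting characterisation already cited in the excerpt (so that the Poincaré-series and ball-counting definitions agree); this is a routine comparison via dyadic decomposition of the series, analogous to the classical case of Kleinian groups.
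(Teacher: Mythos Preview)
Your proof is correct and follows essentially the same covering/packing argument as the paper: the upper bound via the $(M+\varepsilon)$-balls covering $H_\varepsilon(\Lbd)$, and the lower bound via the disjoint $m'$-balls with $m'=\min(m/2,\varepsilon)$ inside $H_\varepsilon(\Lbd)$. Your handling of the radius shifts ($R\pm M'$, $R-m'$) is in fact slightly more careful than the paper's, but the differences are inessential once one divides by $R$ and takes the $\limsup$.
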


\begin{proof}
By the definition of uniformly distributed set, we see that
the union of all hyperbolic (closed) balls $B_{M+\varepsilon}(x_i)$ 
of radius $M+\varepsilon$ centred at $x_i \in X$ covers the 
$\varepsilon$-neighbourhood $H_\varepsilon(\Lambda)$ of the convex hull. 
Hence we have
$$ 
\vol\,(B_R(z) \cap H_\varepsilon(\Lambda)) \leq 
\vol\,(B_{M+\varepsilon}) \cdot \#(B_R(z) \cap X),
$$
where $B_{M+\varepsilon}$ denotes any hyperbolic ball of radius 
$M+\varepsilon$. This gives the inequality $h_c(\Lambda) \leq \Delta(X)$.

On the other hand, to show the converse inequality, we set 
$m'=\min\{m/2,\varepsilon\}$ and take the hyperbolic (open) ball 
$B_{m'}(x_i)$ centred at $x_i \in X$ with radius $m'$.
Then $B_{m'}(x_i)$ is contained in $H_\varepsilon(\Lambda)$ and does not 
intersect any ball of the same radius centred at some other point in $X$.
From this observation, we see that
$$ 
\vol\, (B_{R+m'}(z) \cap H_\varepsilon(\Lambda)) \geq \vol\, (B_{m'}) \cdot
\#(B_R(z) \cap X),
$$
where $B_{m'}$ denotes any hyperbolic ball of radius $m'$.
This yields $h_c(\Lambda) \geq \Delta(X)$, and hence $h_c(\Lambda)=\Delta(X)$.
\end{proof}

For a convex cocompact Kleinian group $G$,
we can take the orbit $G(z)$ of some $z \in H(\Lambda(G))$ as a uniformly
distributed set $X$ in the convex hull of the limit set $\Lambda(G)$.
In this case, $\Delta(X)=\delta(G)$, and hence $\h(\Lbd(G)) = \delta(G)$
by the above proposition. We can extend this statement to geometrically finite 
Kleinian groups as is seen later.

Basic inequalities for our numerical invariants are the following.

\begin{theorem}\label{basicinequality}
For any closed set $\Lbd \subset \S^n$ we have 
$$
\dim_H(\Lbd) \leq \overline{\dim}_B(\Lbd)=\h(\Lbd).
$$
When $\Lbd=\Lbd(G)$ is the limit set of some non-elementary Kleinian group $G$, 
we have that
$$
\delta(G) \leq \dim_H (\Lbd(G)) \leq \overline{\dim}_B(\Lbd(G))=\h(\Lbd(G)).
$$
\end{theorem}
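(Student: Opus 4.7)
The plan is to establish the three (in)equalities in turn. The middle inequality $\dim_H(\Lbd) \leq \udB(\Lbd)$ is a standard fact of fractal geometry, and for non-elementary Kleinian groups the first inequality $\delta(G) \leq \dim_H(\Lbd(G))$ is Bishop--Jones \cite{bj97}: they prove $\delta(G) = \dim_H(\Lbd_c(G))$, and of course $\Lbd_c(G) \subseteq \Lbd(G)$. The new content is the equality $\udB(\Lbd) = \h(\Lbd)$, which I prove for an arbitrary closed $\Lbd \subset \S^n$.

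For this I pass to the upper half-space model $\H^{n+1}$. The case $\Lbd = \S^n$ is trivial (both sides equal $n$), so I may conjugate so that $\Lbd \subset \R^n$ is bounded. With base point $o = (0,\ldots,0,1)$, consider the dyadic points
$$ c_k(m) = (m_1 2^{-k}, \ldots, m_n 2^{-k},\, 2^{-k}), \qquad k \in \Z_{\geq 0},\ m \in \Z^n, $$
and let $X$ be the set of those $c_k(m)$ lying within a fixed hyperbolic distance of $H(\Lbd)$ (possibly after nearest-point projection to $H(\Lbd)$). Using $\cosh d((0,1),(x,y)) = 1 + (|x|^2 + (1-y)^2)/(2y)$, one checks that dyadic points are pairwise $(\log 2)$-separated and that every $p \in H(\Lbd)$ of Euclidean height in $[2^{-k-1}, 2^{-k}]$ lies within bounded hyperbolic distance of some $c_k(m) \in X$; thus $X$ is uniformly distributed in $H(\Lbd)$, and Proposition~\ref{cc-entr-is-delta-X} gives $\h(\Lbd) = \Delta(X)$.

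Next I compute $\Delta(X) = \limsup_R \log\#(B_R(o) \cap X)/R$. The distance formula yields $d(o, c_k(m)) = k \log 2 + O(1)$ for $|m| \leq 2^k$ and grows exponentially otherwise, so since $\Lbd$ is bounded the dyadic points at level $k$ with $|m| > 2^k$ contribute only $O(1)$ to $X$ per scale. The remaining $c_k(m) \in X$, those with $|m| \leq 2^k$, are in bijection up to bounded overlap with the dyadic cubes of side $2^{-k}$ in $\R^n$ meeting $\Lbd$, which number comparably to $N_{2^{-k}}(\Lbd)$. Hence
$$ \#(B_R(o) \cap X) \asymp \sum_{k=0}^{\lfloor R/\log 2 \rfloor} N_{2^{-k}}(\Lbd), $$
a sum whose logarithm agrees with $\log N_{e^{-R}}(\Lbd)$ up to $O(\log R)$. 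Dividing by $R$ and setting $\varepsilon = e^{-R}$,
$$ \Delta(X) = \limsup_{\varepsilon \to 0} \frac{\log N_\varepsilon(\Lbd)}{-\log \varepsilon} = \udB(\Lbd), $$
as claimed.

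The principal obstacle is making precise the bijection between dyadic points of $X$ at level $k$ and dyadic cubes of side $2^{-k}$ meeting $\Lbd$: an isolated meeting of such a cube with $\Lbd$ need not produce a point of $H(\Lbd)$ at the right scale, so the convex-hull restriction can in principle discard dyadic points. This is addressed by working with a bounded hyperbolic thickening of $H(\Lbd)$ (which does not affect $\h(\Lbd)$, being independent of the choice of $\varepsilon$) and, in applications to Kleinian groups, by the fact that $\Lbd(G)$ is perfect, so that cubes meeting $\Lbd(G)$ at a single point are negligible on the log scale.
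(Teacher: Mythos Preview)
Your approach is essentially the paper's: pass to $\H^{n+1}$, set up a dyadic grid, and compare the number $N_k(X)$ of grid points at height $\approx 2^{-k}$ lying near $H(\Lbd)$ with the number $N_k(\Lbd)$ of dyadic $2^{-k}$-cubes meeting $\Lbd$. The paper computes $\h(\Lbd)$ directly as a slab-volume growth rate and then establishes $3^{-n}N_k(\Lbd)\le N_k(X)\le 3^n N_k(\Lbd)$, whereas you route through Proposition~\ref{cc-entr-is-delta-X} and $\Delta(X)$; this is a cosmetic difference.

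The genuine issue is your handling of the lower bound $N_k(X)\gtrsim N_k(\Lbd)$, which you flag as the ``principal obstacle'' but do not actually resolve. Neither of your two fixes works. A bounded hyperbolic thickening $H_\varepsilon(\Lbd)$ still has Euclidean width $\asymp 2^{-k}$ at height $2^{-k}$, so it does not manufacture points of the grid over a cube that $H(\Lbd)$ itself misses; and appealing to perfectness of $\Lbd(G)$ only treats the Kleinian case, whereas the theorem is stated for arbitrary closed $\Lbd$. What is actually needed---and what the paper uses---is the elementary observation that for any $\xi\in I_k(m)\cap\Lbd$ one can pick $\eta\in\Lbd$ with $|\xi-\eta|\ge\diam(\Lbd)>0$; the geodesic $\overline{\xi\eta}\subset H(\Lbd)$ then meets the layer $\R^n\times[2^{-k-1},2^{-k}]$ (for all sufficiently large $k$) at a point whose horizontal distance from $\xi$ is $O(2^{-2k}/|\xi-\eta|)\ll 2^{-k}$, hence inside a cuboid $\tilde I_k(m')$ with $|m_i-m'_i|\le 1$. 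This gives $N_k(X)\ge 3^{-n}N_k(\Lbd)$ for large $k$ with no extra hypotheses on $\Lbd$. Once you insert this one-line geodesic argument in place of your final paragraph, the proof is complete and matches the paper's.
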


\begin{proof}
We prove that the convex core entropy $\h(\Lbd)$ of $\Lbd$ coincides with its 
upper box-counting dimension $\udB(\Lbd)$, thus rendering the first inequality 
of the theorem trivial (see e.g. \cite{fal90}).
Let $\Lambda\ (\neq \S^n)$ be a closed subset of $\S^n$. 
By using the upper half-space model $\H^{n+1}$ for hyperbolic space,
we may assume that $\Lambda$ is a compact subset of $\R^n$.
A uniformly distributed set $X$ in the convex hull $H(\Lambda)$
can be given as follows.
For each $(m_1,\ldots,m_n) \in \Z^n$ and each $k \in \Z$, we
define the following dyadic square of size $2^{-k}$ in $\R^n$
and a cuboid over it in $\H^{n+1}$ together with its `upper centre':
\begin{eqnarray*}
I_k(m_1,\ldots, m_n)&=&\left[\frac{m_1}{2^k},\frac{m_1+1}{2^k}\right] \times 
\cdots \times \left[\frac{m_n}{2^k},\frac{m_n+1}{2^k}\right]
\subset \R^n;\\
\tilde I_k(m_1,\ldots, m_n)&=&\left[\frac{m_1}{2^k},\frac{m_1+1}{2^k}\right] 
\times \cdots \times \left[\frac{m_n}{2^k},\frac{m_n+1}{2^k}\right]
\times \left[\frac{1}{2^{k+1}},\frac{1}{2^{k}} \right]
\subset \H^{n+1};\\
c_k(m_1,\ldots, m_n)&=&\left(\frac{m_1+1/2}{2^k},\ldots, \frac{m_n+1/2}{2^k}, 
\frac{1}{2^{k}}\right) \in \H^{n+1}.
\end{eqnarray*}
If $H(\Lambda) \cap  \tilde I_k(m_1,\ldots, m_n) \neq \emptyset$, then we 
choose the nearest point 
$$
x_k(m_1,\ldots, m_n) \in H(\Lambda) \cap  \tilde I_k(m_1,\ldots, m_n)
$$ 
to the upper-centre $c_k(m_1,\ldots, m_n)$. 
Set 
$$
X=\{x_k(m_1,\ldots, m_n) \mid k \in \Z,\ (m_1,\ldots, m_n) \in \Z^n\}.
$$
We see that $X$ is a uniformly distributed set in $H(\Lambda)$, except for the 
eventuality that there is a cluster of
points $x_k(m_1,\ldots, m_n)$ in a thin part of $H(\Lbd)$ 
that are very close to each other. 
If this is the case, we only have to select one such point from each 
cluster as a member of $X$ and remove the rest in order 
to make $X$ uniformly distributed.

For a fixed integer $k$, let $N_k(\Lambda)$ be the number of dyadic squares 
$\{I_k(m_1,\ldots, m_n)\}$ that intersect $\Lambda$. Then the upper box-counting 
dimension of $\Lambda$ is given by
$$
\udB (\Lambda)= \limsup_{k \to \infty} 
\frac{\log N_k(\Lambda)}{k \log 2}
$$
(see e.g. \cite[Chapter 3]{fal90}).

The convex core entropy is defined by
$$
\h(\Lbd) = \limsup_{R \to \infty} 
\frac{\log \vol\,(B_R(z) \cap H_\varepsilon(\Lbd))}{R} ,
$$
where  
$B_R(z)$ is the ball of centre $z$ and radius $R$ 
in $\H^{n+1}$. Here we may replace the point $z$ with 
any compact subset $Z$ of $\H^{n+1}$ to obtain the same quantity.
We consider $Z=W \times \{\tau\} \subset \H^{n+1}$, 
where $W \subset \R^n$ is a sufficiently large closed ball containing $\Lambda$,
and $\tau>0$ is sufficiently large, that is, above the highest 
$(n+1)$-th coordinate of $H_\varepsilon(\Lbd)$.
By using the fact that the distance between $Z$ 
and $W \times \{2^{-k}\}$ is comparable to $k \log 2$, we obtain
\begin{eqnarray*}
\h(\Lbd) &=& \limsup_{k \to \infty} \frac{\log \vol\,(\R^n \times [2^{-k},\infty) 
\cap H_\varepsilon(\Lbd))}{k \log 2}\\ 
&=&\limsup_{k \to \infty} \frac{\log \vol\,(\R^n \times [2^{-(k+1)},2^{-k}] 
\cap H_\varepsilon(\Lbd))}{k \log 2}.
\end{eqnarray*}
The latter equality always holds in the case of exponential growth.
Also, in general, the limit superior with respect to the continuous variable 
$R \to \infty$ is greater than or equal to that with respect to the discrete 
sequence $k \to \infty$, but here they coincide due to this particular 
property of exponential growth.

Let $N_k(X)$ denote the number of elements of the uniformly distributed set
$X$ that are in $\R^n \times [2^{-(k+1)},2^{-k}]$.
Then, $\vol\,(\R^n \times [2^{-(k+1)},2^{-k}] \cap H_\varepsilon(\Lbd))$ is 
comparable with $N_k(X)$ and hence
$$
\h(\Lbd)=\limsup_{k \to \infty} \frac{\log N_k(X)}{k \log 2}.
$$

Next, we compare $N_k(\Lbd)$ with $N_k(X)$. For every point 
$x_k(m_1,\ldots, m_n) \in X$, there is some $I_k(m'_1,\ldots, m'_n)$ 
intersecting $\Lbd$ with $|m_i-m'_i| \leq 1$ $(i=1,\ldots,n)$.
Conversely, for every square $I_k(m_1,\ldots, m_n)$ intersecting $\Lbd$, 
there is some point $x_k(m'_1,\ldots, m'_n) \in X$ with 
$|m_i-m'_i| \leq 1$ $(i=1,\ldots,n)$.
This shows that
$$
3^{-n} N_k(\Lbd) \leq N_k(X) \leq 3^n N_k(\Lbd),
$$
and hence 
$$
\h(\Lbd)=\overline {{\rm dim}}_B (\Lbd).
$$

Finally, in the case that $\Lbd=\Lbd(G)$ for some non-elementary 
Kleinian group $G$, the additional inequality stated in the theorem follows 
from the theorem of Bishop and Jones \cite{bj97} asserting that 
$\delta(G) = \dim_H (\Lbd_c(G))$.
\end{proof}

\begin{remark*}
Another exponent of similar flavour related to the upper box-counting dimension
has been introduced by Bishop \cite{bis96}. For any closed set 
$\Lambda \subset \S^n$, which we may assume to be contained in $\R^n$,
consider a Whitney decomposition $\mathcal Q$ of the complement 
$\R^n \setminus \Lambda$, which consists of dyadic cubes $\{Q_i\}_{i \in \N}$ 
of disjoint interior whose diameters are comparable to the distance to $\Lambda$.
Then the Whitney exponent $\Delta_{\rm Whit}(\mathcal Q)$ is defined to be the 
critical exponent 
of the series $\sum_{i \in \N} \diam(Q_i)^s$, and is
independent of the choice of the Whitney decomposition $\mathcal Q$.
Then $\Delta_{\rm Whit}(\mathcal Q) \leq {\overline{\rm dim}}_B (\Lambda)$ 
in general and 
$\Delta_{\rm Whit}(\mathcal Q) = {\overline{\rm dim}}_B (\Lambda)$ if $\Lambda$ 
has null $n$-dimensional measure. 
This is a generalisation of a similar result due to Tricot \cite{tri81}
for the $1$-dimensional case.
\end{remark*}

The volume entropy on a horoball can be considered similarly to the 
convex core entropy as follows.

\begin{lemma}\label{horoballentropy}
Let $D$ be some horoball in $\B^{n+1}$ 
and $B_R(z)$ the ball of radius $R>0$ centred at some $z \in \B^{n+1}$. Then 
$$
\lim_{R \to \infty} 
\frac{\log \vol\,(B_R(z) \cap D)}{R}=\frac{n}{2}.
$$
\end{lemma}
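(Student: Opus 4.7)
The plan is to reduce the statement to an explicit integral in the upper half-space model $\H^{n+1}$, where every horoball is isometric to the standard horoball $D=\{(x,t)\in\R^n\times(0,\infty):t\ge 1\}$ based at infinity. Since a bounded change in the reference point $z$ only shifts $R$ additively (by the triangle inequality), the value of the limit is unchanged if I take $z=(0,1)\in\partial D$. I would then use the standard formula
$$\cosh d((0,1),(x,t))=\frac{|x|^2+t^2+1}{2t}$$
together with the hyperbolic volume element $dx\,dt/t^{n+1}$.

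Slicing $B_R(z)\cap D$ by horospheres $\{t=\mathrm{const}\}$, the condition $d((0,1),(x,t))\le R$ becomes $|x|^2\le 2t\cosh R-t^2-1$. Because $t^2-2t\cosh R+1=(t-e^R)(t-e^{-R})$, this Euclidean ball is non-empty exactly when $t\in[e^{-R},e^R]$, and intersecting with $D$ restricts attention to $t\in[1,e^R]$. The hyperbolic area of each slice is $\omega_n(2t\cosh R-t^2-1)^{n/2}/t^n$, where $\omega_n$ is the Euclidean volume of the unit ball in $\R^n$. Integrating in $t$ with the radial factor $dt/t$ produces the key identity
$$\vol(B_R(z)\cap D)=\omega_n\int_1^{e^R}\frac{(2t\cosh R-t^2-1)^{n/2}}{t^{n+1}}\,dt.$$

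It then remains to bound this integral from above and below at order $e^{nR/2}$. For the upper bound I would use $2t\cosh R-t^2-1\le 2te^R$ to get
$$\vol(B_R(z)\cap D)\le \omega_n(2e^R)^{n/2}\int_1^{\infty}t^{-n/2-1}\,dt=\frac{2^{1+n/2}\omega_n}{n}\,e^{nR/2}.$$
For the matching lower bound I would restrict the integration to $t\in[1,2]$, where for $R$ large the elementary inequality $2t\cosh R-t^2-1\ge\cosh R\ge e^R/3$ holds, producing a lower estimate of the form $C\,e^{nR/2}$ with $C>0$ independent of $R$.

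Taking logarithms and dividing by $R$ in the sandwich $Ce^{nR/2}\le\vol(B_R(z)\cap D)\le C'e^{nR/2}$ gives the genuine limit (not only a limsup) equal to $n/2$. There is no real obstacle here; the main technical point is simply to make sure that both the upper and lower bounds are matched at the precise exponential order $nR/2$, with constants independent of $R$, so that the additive $O(1)$ error in $\log\vol(B_R(z)\cap D)$ disappears upon dividing by $R$.
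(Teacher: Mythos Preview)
Your proof is correct and follows essentially the same route as the paper: both reduce to the upper half-space model with $D=\{t\ge 1\}$ and $z=(0,1)$, slice by horospheres, and sandwich the volume between two expressions of order $e^{nR/2}$. The only notable difference is that your upper bound is sharper---you estimate the exact integrand by $2t\cosh R - t^2 - 1 \le 2te^R$ and obtain $\vol(B_R(z)\cap D)\le C'e^{nR/2}$ directly, whereas the paper encloses $B_R(z)\cap D$ in a cone-like region and gets only $\le R\,e^{nR/2}$, which of course still suffices for the limit.
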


\begin{proof}
We will show the statement in the upper half-space model 
$$
\H^{n+1}=\{(x,y) \mid x \in \R^n,\ y>0 \}
$$ 
of hyperbolic space. We may assume that $D=\{y \geq 1\}$ and
$z=(0,1)$. Then the intersection of $B_z(R)$ with the boundary
$\partial D=\{y = 1\}$ is the ball of Euclidean radius 
$\sqrt{2(\cosh R-1)}$. From this, we see that the volume of
the $c$-neighbourhood of $\partial D$ in $B_z(R)$ for some $c>0$ is 
comparable with $\exp(nR/2)$, and hence
$$
\liminf_{R \to \infty} \frac{\log \vol\,(B_R(z) \cap D)}{R} \geq \frac{n}{2}.
$$

On the other hand, $B_R(z) \cap D$ is contained in a region
$$
E=\{(x,y) \in \H^{n+1} \mid \vert x \vert \leq 
\sqrt{(\cosh R-1)/2}\,(y+1),\ 1 \leq y \leq e^R\},
$$
and $\vol\,(E)$ is bounded by
$$
\left(\frac{\cosh R-1}{2}\right)^{\frac{n}{2}} 
\int_1^{e^R}\frac{(y+1)^n}{y^{n+1}} dy
\leq R \exp(nR/2).
$$
This gives 
$$
\limsup_{R \to \infty} \frac{\log \vol\,(B_R(z) \cap D)}{R} \leq \frac{n}{2},
$$
and hence the assertion follows.
\end{proof}

\begin{remark*}
The volume of $B_R(z) \cap D$ is explicitly given by the definite integral
$$
\frac{\pi^{\frac{n}{2}}}{\Gamma(\frac{n}{2}+1)}\,\int_1^{e^R} 
\frac{(-y^2+2y \cosh R-1)^{\frac{n}{2}}}{y^{n+1}} dy.
$$
In the case where $n=1$, this is equal to 
\begin{eqnarray*}
& &2\cosh R\,\left(\frac{\pi}{2}-\arctan \sqrt{\frac{\cosh R-1}{2}}\right)
+2\sqrt{2(\cosh R-1)}\\
& &-2\left(\frac{\pi}{2}+\arctan \sqrt{\frac{\cosh R-1}{2}}\right).
\end{eqnarray*}
In the general case, this can be represented by using hypergeometric series.
\end{remark*}

As a direct consequence of Lemma~\ref{horoballentropy} we obtain the following.

\begin{theorem}
\label{lwr-bd-cx-hull-entr}
If the convex hull $H(\Lambda)$ of a closed set $\Lambda \subset \S^n$
contains a horoball, then $h_c(\Lambda) \geq n/2$. In particular, for
any Kleinian group $G$ acting on $\B^{n+1}$ with the property
that the convex hull of its limit set contains a horoball, we have 
$$
h_c(\Lambda(G)) \geq \frac{n}{2}.
$$
\end{theorem}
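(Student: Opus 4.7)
The plan is to deduce this immediately from Lemma~\ref{horoballentropy}, using monotonicity of volume under set inclusion. The hypothesis gives a horoball $D \subset H(\Lambda)$, and since $H(\Lambda) \subset H_\varepsilon(\Lambda)$ for every $\varepsilon > 0$, we have $D \subset H_\varepsilon(\Lambda)$. Consequently, for any reference point $z \in \B^{n+1}$ and every $R > 0$,
$$
\vol\,(B_R(z) \cap D) \leq \vol\,(B_R(z) \cap H_\varepsilon(\Lambda)).
$$

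Taking the logarithm, dividing by $R$, and passing to the limit superior as $R \to \infty$, the right-hand side by definition yields $h_c(\Lambda)$, while Lemma~\ref{horoballentropy} identifies the limit on the left-hand side as exactly $n/2$. This gives
$$
\frac{n}{2} \;=\; \lim_{R \to \infty} \frac{\log \vol\,(B_R(z) \cap D)}{R}
\;\leq\; \limsup_{R \to \infty} \frac{\log \vol\,(B_R(z) \cap H_\varepsilon(\Lambda))}{R}
\;=\; h_c(\Lambda),
$$
which is the first assertion. The second assertion is just the specialisation to $\Lambda = \Lambda(G)$.

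There is essentially no obstacle here: all the work has already been done in Lemma~\ref{horoballentropy}, and the only things to verify are (i) that the basepoint $z$ can be taken in common between the two limits, which is justified by the remark after the definition of $h_c(\Lambda)$ asserting independence of $z$, and (ii) that the estimate is uniform in the choice of $\varepsilon > 0$, which is likewise noted there. Thus the proof reduces to a one-line monotonicity argument followed by invoking the lemma.
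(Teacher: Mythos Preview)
Your proof is correct and matches the paper's approach exactly: the paper simply states that the theorem is ``a direct consequence of Lemma~\ref{horoballentropy}'' without writing out the monotonicity step you spell out. If anything, you have been more explicit than the authors.
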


\begin{remark*}
The above condition on the convex hull is trivially satisfied for 
non-elementary Kleinian groups $G$ with parabolic elements of maximal rank $n$. 
In that case, it is well-known \cite{bea68} that the critical exponent 
$\delta(G)$ is strictly greater than $n/2$, and we recover the statement of 
Theorem~\ref{lwr-bd-cx-hull-entr} since $\h(\Lambda(G))\geq \delta(G) > n/2$
by Theorem~\ref{basicinequality}. 
\end{remark*}

Note that there is a canonical way of putting a uniformly distributed set 
on a horoball $D$. See Cannon and Cooper \cite{cc92} for literature.
For instance, in the horoball $D=\{(x,y) \mid x \in \R^n,\ y \geq 1\}$ 
of the upper half-space model $\H^{n+1}$ as above, we can define
$$
X_D=\bigcup_{k \in \N \cup \{0\}}((2^k\Z)^n,2^k)
$$
to be a uniformly distributed set.

Using this uniformly distributed set on a horoball, it is not too 
difficult to show that for a geometrically finite Kleinian group
the convex core entropy coincides with the critical exponent.
However, this can be immediately deduced from Theorem~\ref{basicinequality}
and Stratmann and Urba{\'n}ski's result \cite{stur96} that the
Hausdorff, packing and box-counting dimensions of the limit set 
of a geometrically finite Kleinian group are coincident and agree with the 
critical exponent.

\begin{proposition}\label{geomfiniteequal}
For a non-elementary geometrically finite Kleinian group $G$,
we have that
$$
\delta(G)={\rm dim}_H(\Lambda(G))={\rm dim}_B(\Lambda(G))=\h(\Lbd(G)).
$$ 
\end{proposition}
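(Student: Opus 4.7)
The plan is to simply combine the chain of inequalities supplied by Theorem~\ref{basicinequality} with the coincidence of dimensions for geometrically finite groups proved by Stratmann and Urba\'nski. Since non-elementary geometrically finite groups have been analysed in great detail in \cite{stur96}, there is no need to do any direct geometric work inside the convex hull here; the content of the proposition is really just an observation that $h_c$ fits into the existing picture.

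Concretely, I would first invoke Theorem~\ref{basicinequality}, which gives the chain
$$\delta(G) \;\leq\; \dim_H(\Lbd(G)) \;\leq\; \overline{\dim}_B(\Lbd(G)) \;=\; h_c(\Lbd(G)).$$
The only nontrivial input that is still needed is an upper bound on $h_c(\Lbd(G))$ matching $\delta(G)$. For this I would cite the result of Stratmann and Urba\'nski \cite{stur96} that for any non-elementary geometrically finite Kleinian group the Hausdorff and box-counting dimensions of the limit set exist, are equal, and coincide with the critical exponent. In particular, $\overline{\dim}_B(\Lbd(G)) = \delta(G)$.

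Combining this equality with the chain above forces every inequality to be an equality, giving
$$\delta(G) \;=\; \dim_H(\Lbd(G)) \;=\; \dim_B(\Lbd(G)) \;=\; h_c(\Lbd(G)),$$
where the existence of $\dim_B(\Lbd(G))$ (i.e.\ the coincidence of upper and lower box-counting dimensions) is guaranteed by the Stratmann--Urba\'nski theorem, not by Theorem~\ref{basicinequality} alone.

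There is essentially no obstacle once these two ingredients are in place; the only thing to be careful about is not to claim that $h_c(\Lbd(G)) = \delta(G)$ is obtainable directly from the explicit uniformly distributed set $X_D$ on a horoball constructed above, because the parabolic subgroups of a geometrically finite group need not have maximal rank, and one would then still have to carry out the cusp-by-cusp entropy estimate. Routing through Theorem~\ref{basicinequality} and \cite{stur96} bypasses this entirely and is the cleanest path.
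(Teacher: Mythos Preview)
Your proposal is correct and matches the paper's own argument essentially verbatim: the paper states the proposition as an immediate consequence of Theorem~\ref{basicinequality} together with the Stratmann--Urba\'nski result \cite{stur96}, and even makes the same remark that a direct horoball computation is possible but unnecessary.
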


For a Kleinian group $G$ acting on $3$-dimensional hyperbolic space $\B^3$,
an extension of this proposition was obtained by Bishop \cite{bis97} and
also in \cite{bj97}. A Kleinian group $G$ is called {\it analytically finite} if
$(\S^2-\Lambda(G))/G$ consists of at most a finite number of compact Riemann 
surfaces possibly with finitely many punctures. 
Remark that this class includes any group $G$ of the first kind 
(for which $\Lambda(G)=\S^2$).
Geometrically finite groups are analytically finite and so are finitely 
generated groups acting on $\B^3$ by the Ahlfors finiteness theorem.
It was proved in \cite{bis97} that, if a non-elementary Kleinian group $G$ is analytically
finite and $\Lambda(G) \neq \S^2$, then 
$\delta(G)$ coincides with $\Delta_{\rm Whit}(\mathcal Q)$
for a Whitney decomposition $\mathcal Q$ of the complement of $\Lambda(G)$.
Note that the aforementioned result in \cite{tri81} and \cite{bis96} asserts that
this is equal to $\udB(\Lbd(G))$ if $\Lambda(G)$ has zero area.

On the other hand, the rather deep result of Bishop and Jones \cite{bj97} shows
that, if $G$ is analytically finite and geometrically infinite and if $\Lbd(G)$
has zero area, then $\delta(G)=2$. This in particular implies that,
whenever a Kleinian group $G$ is 
analytically finite, the box-counting dimension of its limit set exists
and coincides with the Hausdorff dimension $\dim_H(\Lambda(G))$.
Also by the solution of the Ahlfors conjecture (which follows from 
the Tameness Conjecture proven by Agol~\cite{ag04} and independently by 
Calegari and Gabai~\cite{cg06}),
every finitely generated Kleinian group $G$ has the limit set of zero area if $\Lambda(G) \neq \S^2$
and $\delta(G)=2$ if $\Lambda(G) = \S^2$.
In summary, we have the following theorem obtained by piecing together 
results from \cite{bis96}, \cite{bis97}, \cite{stur96}, 
\cite{bj97}, \cite{ag04} and \cite{cg06}.

\begin{theorem}
For a non-elementary analytically finite Kleinian group $G$ acting on $\B^3$, 
we have that
$$
\dim_H(\Lambda(G))=\dim_B(\Lambda(G))=\h(\Lambda(G)).
$$
In addition, if $G$ is finitely generated or if the $2$-dimensional measure 
of $\Lambda(G)$ is zero, then $\delta(G)$ coincides with these quantities.
\end{theorem}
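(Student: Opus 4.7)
The plan is a case analysis that pieces together Theorem~\ref{basicinequality}, Proposition~\ref{geomfiniteequal}, the results of Bishop~\cite{bis96,bis97}, Bishop--Jones~\cite{bj97}, Stratmann--Urba{\'n}ski~\cite{stur96}, and, for the second assertion, the resolution of the Ahlfors measure conjecture via the Tameness Conjecture of Agol~\cite{ag04} and Calegari--Gabai~\cite{cg06}. All of the work amounts to matching the hypotheses of the cited results against the correct subcase.

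First I would dispose of the geometrically finite case: Proposition~\ref{geomfiniteequal}, which incorporates \cite{stur96}, directly yields $\delta(G) = \dim_H(\Lbd(G)) = \dim_B(\Lbd(G)) = \h(\Lbd(G))$, so both conclusions follow at once. For the rest, assume $G$ is analytically finite but geometrically infinite.

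To establish the first equality $\dim_H(\Lbd(G)) = \dim_B(\Lbd(G)) = \h(\Lbd(G))$, I split into three subcases. If $\Lbd(G) = \S^2$, all four invariants equal $n=2$ trivially. If $\Lbd(G)$ has positive $2$-dimensional Lebesgue measure but is a proper subset of $\S^2$, then positivity of this measure forces $\dim_H(\Lbd(G)) \geq 2$, and combining this with the chain $\dim_H \leq \udB = \h \leq 2$ from Theorem~\ref{basicinequality} collapses all three to $2$ and yields the existence of $\dim_B$. In the remaining subcase, $\Lbd(G) \neq \S^2$ and $\Lbd(G)$ has zero area, so the deep theorem of Bishop and Jones~\cite{bj97} supplies $\delta(G) = 2$; squeezing with Theorem~\ref{basicinequality} and the upper bound $\h \leq 2$ again collapses all four invariants to $2$. (Alternatively, here one may combine Bishop's identity $\delta(G) = \Delta_{\mathrm{Whit}}(\mathcal Q)$ from~\cite{bis97} with the Tricot--Bishop equality $\Delta_{\mathrm{Whit}}(\mathcal Q) = \udB(\Lbd(G))$ in the zero-area regime~\cite{tri81,bis96} to arrive at the same conclusion without invoking~\cite{bj97}.)

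For the added equality with $\delta(G)$ I split the additional hypothesis. If $\Lbd(G)$ already has zero area, the geometrically infinite analytically finite subcase is settled by Bishop--Jones as above (with $\delta(G)=2$), while the geometrically finite subcase is covered by Proposition~\ref{geomfiniteequal}. If instead $G$ is assumed only to be finitely generated, then the Tameness Conjecture of~\cite{ag04,cg06}, which implies the Ahlfors measure conjecture, forces $\Lbd(G)$ either to equal $\S^2$ (in which case $\delta(G)=2$ is immediate) or to have zero $2$-dimensional Lebesgue measure, reducing the problem to the preceding case. The only real obstacle here is organizational: one must keep careful track of which theorem applies to which combination of \emph{geometrically finite} versus \emph{infinite}, \emph{$\Lbd = \S^2$} versus proper subset, and \emph{zero} versus \emph{positive area}, and in particular verify that the lone subcase which is \emph{not} covered by any of the cited identifications of $\delta$ with $\dim_H$, namely a proper-subset limit set of positive area in an analytically but not finitely generated group, is correctly excluded from the hypothesis of the stronger second assertion.
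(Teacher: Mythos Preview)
Your case analysis is correct and matches the paper's own approach, which presents the theorem as a summary of the cited results rather than giving a formal proof. The paper's discussion preceding the theorem runs through exactly the same subcases: Proposition~\ref{geomfiniteequal} for the geometrically finite case, the trivial squeeze when the limit set has positive area, the Bishop--Jones result $\delta(G)=2$ for analytically finite, geometrically infinite groups with zero-area limit set, and the Tameness/Ahlfors package for the finitely generated case.

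One small wording issue: in the finitely generated subcase with $\Lbd(G)=\S^2$ you write that ``$\delta(G)=2$ is immediate.'' It is not immediate; there exist (infinitely generated) Kleinian groups of the first kind with $\delta<2$. For finitely generated groups the conclusion $\delta(G)=2$ is itself part of the Tameness package (via Canary's work on topologically tame groups), and the paper explicitly attributes it there. You should cite it accordingly rather than calling it trivial.
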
 

Finally, in this section, we introduce a class of convex hulls $H(\Lambda)$ 
in hyperbolic space $\B^{n+1}$
such that the convex core entropy $h_c(\Lambda)$ is strictly less than $n$.

\begin{definition}
We say that a convex hull $H(\Lambda)$ in $\B^{n+1}$ is 
{\it $\ell$-tight} (or just tight in brief)
if every point $z \in H(\Lambda)$ is within distance $\ell \geq 0$ from
the boundary $\partial H(\Lambda)$. Also $H(\Lambda)$ is
{\em weakly $\ell$-tight} if there exist mutually disjoint horoballs 
$\{D_p\}_{p \in \Phi}$ with the set of tangency points $\Phi \subset \S^n$
such that every point $z \in H(\Lambda)\setminus \bigcup_{p \in \Phi} D_p$ 
is within distance $\ell$ from $\partial H(\Lambda)$.
\end{definition}

As first natural examples of Kleinian groups with the weak tightness
property we have all non-trivial normal subgroups of
geometrically finite groups of the second kind.
For a Kleinian group $G$ acting on $\B^{n+1}$, 
generalising an argument of Tukia \cite{tuk84} for
geometrically finite groups, it was proven in \cite{mat00} that,
if the convex hull $H(\Lambda(G))$ of the limit set of $G$ is 
weakly tight, then the Hausdorff dimension $\dim_H(\Lambda(G))$ 
is strictly less than $n$. In \cite{str06}, $G$ is called 
{\it geometrically tight} if $H(\Lambda(G))$ is weakly tight.
By essentially the same arguments, 
we can show that the convex core entropy $h_c(\Lambda(G))$ is
strictly less than $n$ also in this case. 
Actually, this fact is independent of group actions and valid in general 
for an arbitrary convex hull with the tightness property.

\begin{theorem}\label{bdneighbouring}
If a convex hull $H(\Lambda)$ in $\B^{n+1}$ is weakly $\ell$-tight,
then 
$$
h_c(\Lambda) \leq s(n,\ell) <n,
$$
where $s(n,\ell)$ is some constant depending only on $n$ and $\ell$. 
In particular, if a Kleinian group $G$ is geometrically tight,
then $h_c(\Lambda(G))<n$.
\end{theorem}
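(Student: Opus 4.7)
The plan is to invoke the equality $h_c(\Lambda) = \overline{\dim}_B(\Lambda)$ from Theorem~\ref{basicinequality} and then bound the upper box-counting dimension by means of the porosity inequality recorded in the preliminaries: a $c$-porous subset of $\R^n$ has $\overline{\dim}_B \leq n - C_n c^n$. The whole problem therefore reduces to producing a uniform porosity constant $c = c(n,\ell) > 0$ for $\Lambda$ out of the (weak) tightness of $H(\Lambda)$. The corollary for a geometrically tight Kleinian group is then immediate from the definition.

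I would first handle the strict $\ell$-tight case and work in the upper half-space model $\H^{n+1}$, with $\Lambda \subset \R^n$. Fix $\xi \in \Lambda$ and a scale $r>0$, and consider the point $z_r = (\xi, r) \in \H^{n+1}$. If $z_r \notin H(\Lambda)$, a hyperbolic ball of definite size around $z_r$ lies off $H(\Lambda)$ and its Euclidean shadow yields a ball of radius $\sim r$ in $B_{Cr}(\xi) \setminus \Lambda$. If $z_r \in H(\Lambda)$, tightness supplies $w \in \partial H(\Lambda)$ with $d(z_r,w) \leq \ell$; convexity of $H(\Lambda)$ then provides a supporting hyperbolic half-space at $w$, and hence a hyperbolic ball $B(u,\rho) \subset \B^{n+1} \setminus H(\Lambda)$ with $\rho$ bounded below by a constant depending only on $\ell$ and with $u$ within hyperbolic distance $\leq \ell$ of $w$. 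The Euclidean shadow of $B(u,\rho)$ is then a ball of radius comparable to $r \cdot c(n,\ell)$ sitting inside $B_{C(\ell) r}(\xi) \setminus \Lambda$, proving $c(n,\ell)$-porosity of $\Lambda$. Setting $s(n,\ell) = n - C_n c(n,\ell)^n$ completes this case.

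For the weakly $\ell$-tight case the previous argument goes through verbatim at any $(\xi,r)$ for which $z_r$ lies outside every horoball $D_p$; in particular it applies whenever $r$ is at most the Euclidean distance from $\xi$ to the boundary of the nearest horoball shadow. The remaining scales are those on which $z_r$ lies inside some $D_p$. Here I would exploit the horoball itself: since $D_p \subset H(\Lambda)$ is tangent to $\S^n$ at $p$ and the neighbouring horoballs are disjoint, the Euclidean shadow of $D_p$ is an open Euclidean ball $B_p \subset \R^n$ tangent to $p$ and meeting $\Lambda$ only in the direction where $H(\Lambda)$ continues. A ball of diameter comparable to the Euclidean radius of $B_p$, reflected across $p$ (or rather placed on the far side of the horoball from $H(\Lambda)$), lies outside $\Lambda$ and produces the needed gap of size $\sim r$ at the relevant scales. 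Combining the two regimes yields a single porosity constant $c(n,\ell)$, and the same porosity bound $s(n,\ell) = n - C_n c(n,\ell)^n < n$ applies.

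The main obstacle is the weak tightness step: the supporting hyperplane argument breaks down exactly when the test point $z_r$ sits deep inside one of the horoballs $D_p$, so one must show that the horoball geometry itself creates the missing gap in $\Lambda$. One has to verify that the horoballs may be chosen so that their shadows are genuinely disjoint from $\Lambda$ on a definite portion of their perimeter, uniformly in $p$; otherwise the constant $c(n,\ell)$ cannot be made independent of the horoball family. This is the point where a careful use of the disjointness of $\{D_p\}_{p\in\Phi}$ (as in the arguments of \cite{mat00}) is essential, and it is what prevents a direct quotation of the strictly tight case.
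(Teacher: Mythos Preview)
Your porosity argument for the strictly $\ell$-tight case is correct and is exactly what the paper carries out in the very next result, Theorem~\ref{porosityestimate}, where it obtains the explicit bound $h_c(\Lambda)\le n-Ce^{-2n\ell}$.  So for that case your approach and the paper's coincide.

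For the \emph{weakly} $\ell$-tight case, however, there is a genuine gap.  The paper explicitly flags the obstruction you identify (see the paragraph preceding Theorem~\ref{porosityestimate}): sets $\Lambda$ with weakly tight convex hull need not be (mean) porous, so the porosity inequality cannot be invoked directly.  Your proposed rescue does not work.  The claims that $D_p\subset H(\Lambda)$, that the Euclidean shadow of $D_p$ is a ball ``tangent to $p$'' meeting $\Lambda$ only on one side, and that a reflected ball ``lies outside $\Lambda$'' are all unjustified.  For a rank-$n$ cusp the parabolic stabiliser is a full lattice in $\R^n$, so $\Lambda$ accumulates at $p$ from \emph{every} direction; there is no preferred side on which the shadow misses $\Lambda$, and no uniform gap of size $\sim r$ is produced at the scales where $z_r$ lies deep in $D_p$.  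You correctly name this as the main obstacle, but you have not overcome it.

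The paper therefore takes a different route for Theorem~\ref{bdneighbouring}.  It imports from \cite{mat00} a nested family of cubes $\mathcal L_m=\{Q_{m,i}\}$ with the contraction property $\sum_{Q_{m+1}\in\mathcal L_{m+1}(Q_m)} d(Q_{m+1})^s \le c\, d(Q_m)^s$ for some $s=s(n,\ell)<n$ and $c<1$.  From these cubes it builds a uniformly distributed set $X=X'\cup\bigcup_{p} X_{D'}$, shows directly that $P^s(X',o)<\infty$ via the cube inequality, and controls the horoball pieces $X_{D'}$ using Lemma~\ref{horoballentropy}, obtaining $h_c(\Lambda)=\Delta(X)\le\max\{s(n,\ell),n/2\}<n$.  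This bypasses porosity entirely in the presence of horoballs.
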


\begin{proof}
By transferring $H(\Lbd)$ and $\Lbd$ to the upper-half plane model $\H^{n+1}$ 
of hyperbolic space together with its boundary $\R^n \cup \{\infty\}$,
we can assume that $\Lambda$ is a bounded subset of $\R^n$. 
For a Euclidean cube $Q$ in $\R^n$, we denote by $\overline Q$
the Euclidean cube in $\overline{\H^{n+1}}$ having $Q$ as the lower face.
Also $d(Q)$ denotes its side length and $x(Q) \in \H^{n+1}$ the centre of 
the upper face of $\overline Q$.

Assume that $H(\Lambda)$ is weakly $\ell$-tight.
It is shown in \cite{mat00} that we can choose the following family 
$\mathcal L_m=\{Q_{m,i}\}_{i \in \N}$ of cubes $Q_m=Q_{m,i}$ for each $m \in \N$. 
For every fixed $m$, all cubes $Q_m \in \mathcal L_m$ have mutually disjoint 
interior and intersect $\Lambda$. Also the union $\bigcup Q_m$ taken over all 
$Q_m \in \mathcal L_m$ covers all of $\Lambda$ except the set $\Phi$ of 
tangency points of the horoballs $\{D_p\}_{p \in \Phi}$ which appear in 
the definition of weak tightness for $H(\Lambda)$. 
Each cube $Q_m \in \mathcal L_m$ contains smaller cubes $Q_{m+1}$ in 
$\mathcal L_{m+1}$ that satisfy the following properties. Here 
we denote the family of these cubes $Q_{m+1}$ contained in a given $Q_m$ by 
$\mathcal L_{m+1}(Q_m)$. 
\begin{itemize}
\item 
For every point $z$ on the boundary $\partial H(\Lambda)$,
there is a upper centre $x(Q_m)$ of some $Q_m \in \mathcal L_m$, $m \in \N$, 
within a uniformly bounded distance of $z$.
\item
There exist positive constants $s=s(n,\ell)<n$ and $c<1$ depending only on $n$ 
and $\ell$ such that 
$$
\sum_{Q_{m+1} \in \mathcal L_{m+1}(Q_m)} d(Q_{m+1})^s \leq c d(Q_m)^s
$$
for each $Q_m \in \mathcal L_{m}$ and every $m \in \N$.
\end{itemize}
Note that the first property above is not explicitly written down in 
\cite{mat00} but it is clear from the construction.

We choose a point $x_m \in H(\Lambda)$ for each $Q_m$
that is closest to $x(Q_m)$.
Let $X'$ be the union of all such points $x_m$ taken over 
all $Q_m \in \mathcal L_m$ for all $m \in \N$. 
Then $X'$ is uniformly distributed everywhere in some neighbourhood of 
$\partial H(\Lbd)$ apart from the union of $\{D_p\}_{p \in \Phi}$. 
For each horoball $D=D_p$, we put a canonical set of points $X_{D'}$ in a 
uniformly distributed manner into $D'=D \cap H(\Lbd)$. 
Then the union of $X'$ and $X_{D'}$ for all horoballs defines 
a uniformly distributed set $X$ in $H(\Lambda)$ due to its weak tightness.

In each $X_{D'}$, we take the nearest point $x_{D'}$ to a given base point 
$o \in \H^{n+1}$.
By elementary hyperbolic geometry, we see that there is a constant $a>0$ 
independent of $D'$ such that
$$
d(x,o)+a \geq d(x,x_{D'})+d(x_{D'},o)
$$
is satisfied for every $x \in X_{D'}$. This yields
\begin{eqnarray*}
P^s(X_{D'},o) &=&\sum_{x \in X_{D'}}e^{-sd(x,o)}\\
&\leq& \sum_{x \in X_{D'}}e^{-s(d(x,x_{D'})+d(x_{D'},o)-a)}
= e^{sa}e^{-sd(x_{D'},o)}P^s(X_{D'},x_{D'}).
\end{eqnarray*}
For the sake of simplicity, we may assume that $X'$ contains each $x_{D'}$
by enlarging $X'$. Then we have
$$
P^s(X,o) \leq e^{sa} P^s(X',o) \cdot P^s(X_{D'},x_{D'}),
$$
where $D$ can be chosen arbitrarily from $\{D_p\}$.
Using this inequality and Lemma \ref{horoballentropy}, we have 
$h_c(\Lambda)=\Delta(X) \leq \max\{\Delta(X'),n/2\}$, 
where $\Delta(X')$ is the critical exponent defined for $X'$ instead of $X$.

In order to estimate $\Delta(X')$, we use the second property of 
$\mathcal L_m=\{Q_m\}$ as above.
It is well-known that the hyperbolic distance $d(o,x_m)$ can be estimated by
using the Euclidean distance from $x(Q_m)$ to the boundary $\R^n$. 
In particular, there is a uniform constant $C>0$ such that 
$e^{-d(o,x_m)} \leq C d(Q_m)$.
Then, for $s=s(n,\ell)$, we have
$$
P^s(X',o) \leq C^s \sum_{m \in \N} \sum_{Q_m \in \mathcal L_m} d(Q_m)^s
\leq C^s \sum_{m \in \N} c^m<\infty,
$$
which shows that $\Delta(X') \leq s(n,\ell) <n$.
\end{proof}

In the case where $H(\Lambda)$ is $\ell$-tight without negligible horoballs,
we can obtain a better estimate of the convex core entropy $h_c(\Lambda)$
in terms of $\ell$. This is done by using the coincidence of $h_c(\Lambda)$ with
the upper box-counting dimension $\overline{\dim}_B (\Lambda)$ and estimating 
the porosity constant for $\Lambda$ in terms of $\ell$. Then the upper estimate 
of $\overline{\dim}_B (\Lambda)$ in terms of the porosity constant yields 
the result. One may think that the above theorem can also be obtained in this 
way, by using the relation between $\overline{\dim}_B (\Lambda)$ and the 
so-called mean porosity introduced in \cite{kr97}, but it seems that sets 
$\Lambda$ for which $H(\Lambda)$ is weakly tight are not necessarily mean porous 
as in the definition, so the known estimate cannot be applied directly.

\begin{theorem}\label{porosityestimate}
If the convex hull $H(\Lambda) \subset \B^{n+1}$ of a closed set 
$\Lambda \subset \S^n$ is $\ell$-tight, then
$$
h_c(\Lambda)=\overline{\dim}_B (\Lambda) \leq n-C \, e^{-2n\ell},
$$
where $C>0$ is a constant depending only on $n$.
\end{theorem}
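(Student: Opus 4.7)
The strategy, as sketched in the paragraph preceding the statement, is to reduce to the porosity--dimension estimate from the Preliminaries by extracting a quantitative porosity constant from the tightness hypothesis. By Theorem~\ref{basicinequality} one already has $h_c(\Lambda) = \overline{\dim}_B(\Lambda)$, so the task reduces to bounding the upper box-counting dimension. Since a $c$-porous subset of $\R^n$ satisfies $\overline{\dim}_B \leq n - C_n c^n$, it will suffice to establish that $\ell$-tightness of $H(\Lambda)$ forces $\Lambda$ to be $c$-porous with $c \geq c_0 e^{-2\ell}$ for some dimensional constant $c_0 = c_0(n) > 0$; then taking $C = C_n c_0^n$ yields the displayed bound $n - C e^{-2n\ell}$.

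To establish the porosity I would pass to the upper half-space model $\H^{n+1}$, so that $\Lambda$ is a bounded closed subset of $\R^n$, and fix $x \in \Lambda$ together with a sufficiently small scale $r > 0$. Using an auxiliary point of $\Lambda$ distinct from $x$, one produces a point $z \in H(\Lambda)$ whose Euclidean height and horizontal distance from $x$ are both comparable to $r$, by intersecting a suitable geodesic in $H(\Lambda)$ with the horosphere $\{y = r\}$. By $\ell$-tightness there exists $z' \in \partial H(\Lambda)$ with $d(z,z') \leq \ell$; the standard identity $\cosh d(z_1,z_2) = 1 + |z_1 - z_2|^2/(2 y_1 y_2)$ then bounds the Euclidean displacement $|z-z'|$ and the ratio of heights by factors of order $e^\ell$. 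At $z'$ I would select a supporting hyperbolic hyperplane $P$ of the convex set $H(\Lambda)$; this is a Euclidean hemisphere (or vertical half-plane) orthogonal to $\R^n$, with equatorial sphere $S \subset \R^n$. Because $H(\Lambda)$ lies in one closed hyperbolic half-space bounded by $P$, passing to the boundary at infinity shows that $\Lambda$ is contained in one of the two closed Euclidean regions of $\R^n \setminus S$, say $\overline{U^+}$. Hence any Euclidean ball contained in the complementary region $U^-$ has its distance to $\Lambda$ bounded below by its distance to $S$.

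The main technical step, and the one I expect to be the principal obstacle, is a geometric verification that a Euclidean ball of radius $c_0\, r\, e^{-2\ell}$ can be fit inside $D_r(x) \cap U^-$. One factor of $e^{-\ell}$ accounts for the possible Euclidean drift of $z'$ away from $(x,r)$, which may be as large as $\lesssim r e^\ell$; the second factor of $e^{-\ell}$ controls the curvature of $S$ relative to the scale $r$. The verification should proceed by a case analysis on whether the hemisphere $P$ has radius large, comparable to, or small relative to $r$: in each configuration the sphere $S$ is respectively almost flat within $D_r(x)$, cuts $D_r(x)$ transversally, or bounds a small Euclidean ball near $x$, and in all three configurations a free ball of the claimed radius can be produced on the side of $S$ disjoint from $\Lambda$. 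Verifying that no degenerate choice of supporting hyperplane $P$ at the boundary point $z'$ can degrade this constant is the delicate point. Once $c_0 e^{-2\ell}$-porosity of $\Lambda$ is in hand, the porosity--dimension estimate quoted above gives $h_c(\Lambda) = \overline{\dim}_B(\Lambda) \leq n - C\, e^{-2n\ell}$, as required.
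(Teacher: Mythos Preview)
Your reduction to the porosity--dimension estimate is exactly the paper's strategy, and the target porosity constant $c \asymp e^{-2\ell}$ is the correct one. Where you diverge from the paper is in the argument that $\ell$-tightness forces this porosity, and here the paper's route is both simpler and avoids the obstacle you yourself flag.

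The paper argues by \emph{contrapositive} and makes one normalisation you do not: it sends a point of $\Lambda$ to $\infty$ in the upper half-space model, so that every vertical ray over a point of $\Lambda' := \Lambda \setminus \{\infty\}$ is a geodesic in $H(\Lambda)$. If $c$-porosity fails at $x \in \Lambda'$ and scale $r$ (so every $y \in D_r(x)$ lies within $cr$ of $\Lambda'$), then the vertical geodesics over these nearby points of $\Lambda'$ force $D_r(x) \times [cr,\infty)$ into $H(\Lambda)$. This chimney contains a Euclidean ball of radius $r$ centred at height $cr + r$, whose hyperbolic radius is $\tfrac{1}{2}\log(1 + 2/c)$; taking $c = 2e^{-2\ell}$ makes this exceed $\ell$, contradicting tightness. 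No supporting hyperplanes, no case analysis --- just a direct volume-in-the-convex-hull computation.

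Your direct approach via a boundary point $z'$ and a supporting hemisphere $P$ is correct in principle, but the step you call the principal obstacle is genuinely problematic as stated. The point $z'$ lies within hyperbolic distance $\ell$ of $z \approx (x,r)$, which in Euclidean terms permits a horizontal drift of order $r\sinh\ell$; hence the equatorial sphere $S$ of $P$ can sit at Euclidean distance $\gg r$ from $x$, so that $D_r(x)$ lies entirely on the $\Lambda$-side of $S$ and $D_r(x) \cap U^- = \emptyset$. Your proposed case split on the radius of $P$ does not address this, since the issue is the \emph{position} of $S$ relative to $D_r(x)$, not its curvature. One could try to repair this by placing $z$ at height of order $re^{-\ell}$ rather than $r$, or by selecting $z'$ more carefully, but the paper's contrapositive with $\infty \in \Lambda$ sidesteps the whole difficulty in two lines.
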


\begin{proof}
We again use the upper half-space model $\H^{n+1}$ and
assume that
$\Lambda$ is a closed set of $\R^n \cup \{\infty\}$ containing $\infty$
and set $\Lambda'=\Lambda-\{\infty\} \subset \R^n$.
We will show that, if $H(\Lambda)$ is $\ell$-tight,
then $\Lambda'$ is $c$-porous for $c=2e^{-2\ell}$. 
Then by the well-known estimate of the upper box-counting dimension
$$
\overline{\dim}_B (\Lbd') \leq n-C \, c^n
$$
and by Theorem \ref{basicinequality}, we have the required inequality.

Suppose to the contrary that $\Lbd'$ is not $c$-porous for $c=2e^{-2\ell}$. 
Then there are $x \in \Lbd'$ and $r>0$ such that
every point $y$ in the Euclidean ball $D_r(x) \subset \R^n$ satisfies 
$d\,_{\R^n}(y,\Lbd') < cr$.
It is easy to see that $D_r(x) \times [cr,\infty) \subset \H^{n+1}$ is contained 
in the convex hull $H(\Lambda)$, and in particular the Euclidean ball 
$\tilde D_r(z) \subset \H^{n+1}$ of centre $z=(x, cr+r)$ is in $H(\Lambda)$.
Since the hyperbolic radius of $\tilde D_r(z)$ is $\frac{1}{2} \log (1+2/c)$, 
the hyperbolic centre $z_0 \in H(\Lambda)$ of $\tilde D_r(z)$ satisfies 
$$
d(z_0, \partial H(\Lambda)) \geq \frac{1}{2} \log \left(1+\frac{2}{c}\right) > 
\frac{1}{2} \log \frac{2}{c}=\ell.
$$
This implies that $H(\Lambda)$ is not $\ell$-tight, which proves the claim.
\end{proof}

\section{Amenability of pants decompositions}
\label{amenab-pants-section}

We define amenability of graphs and then apply this concept to a certain type
of decomposition of a hyperbolic surface. A graph ${\mathcal G}=(V,E)$ consists 
of the set $V$ of vertices and the set $E$ of edges that connect vertices in $V$. 
A subgraph $K=(V',E')$ of ${\mathcal G}$ is a graph with $V' \subset V$ and 
$E' \subset E$ such that every edge in $E'$ connects vertices in $V'$.
The boundary $\partial K$ of a subgraph $K=(V',E')$ is the subset $V''$ of 
vertices in $V'$ that are connected to a vertex in $V \setminus V'$ by an 
edge in $E$.

\begin{definition}
\label{notions-amenability}
A connected graph ${\mathcal G}=(V,E)$ is called \emph{amenable} if
$$
\inf_K \frac{|\partial K|}{|K|} = 0,
$$
where the infimum is taken over all finite connected subgraphs $K$.
Moreover, ${\mathcal G}$ is called \emph{strongly amenable} if
$$
\lim_{m \to \infty} \inf_{K^{(m)}} \frac{|\partial K^{(m)}|}{|K^{(m)}|} = 0,
$$
where the infimum is taken over all finite connected subgraphs $K^{(m)}$
with $|K^{(m)}| \leq m$ and $v \in K^{(m)}$ for some fixed vertex $v \in V$.
Finally, ${\mathcal G}$ is defined to be \emph{uniformly strongly amenable} if
the above convergence as $m \to \infty$ is uniform independently of
the choice of $v \in V$.
\end{definition}

For more details on the notion of amenability see e.g. \cite{bro81}, 
\cite{zim84}. For amenability of graphs see e.g. \cite{woe00} and the appendix 
in \cite{hss00}. Note that a graph is assumed to be simple in some cases but our 
graph defined later can contain a loop edge at a vertex and double edges between 
two vertices. However, as far as our arguments are concerned, these edges 
do not cause problems. If necessary, we may add virtual vertices at the middle 
of all edges to make the graph simple.

\begin{remark*}
As a direct consequence of the definition, a graph is non-amenable 
if and only if
$$
\sup_K \frac{|K|}{|\partial K|} < \infty,
$$
where the supremum is again taken over all finite connected subgraphs $K$.
\end{remark*}

We say that a finitely generated group $G$ is (non-) amenable if the Cayley 
graph of $G$ with respect to some generating set $A$ is (non-) amenable.
This definition is independent of the choice of the generating set $A$.
Strong amenability and uniformly strong amenability of $G$ 
are defined in the same way.
However, since $\Gr$ is \emph{homogeneous} in the sense that
$G$ acts on the vertices of its Cayley graph transitively,
we see that these concepts of amenability are all the same for finitely 
generated groups.

A normal covering $M_2 \to M_1$ is called amenable if its covering 
transformation group, which is isomorphic to the quotient group 
$\pi_1(M_1)/\pi_1(M_2)$,
is amenable.

The following theorem is due to Brooks \cite{bro85}. Here, $\lambda_0$ denotes 
the bottom of the $L^2$-spectrum of a Riemannian manifold. 
Note that, for a covering $M_2 \to M_1$, the inequality 
$\lambda_0(M_1) \leq \lambda_0(M_2)$ is always satisfied.

\begin{theorem}[Brooks \cite{bro85}]
\label{brooks}
Let $M_1$ be a $(n+1)$-dimensional hyperbolic manifold given by a 
convex cocompact Kleinian group with $\lambda_0(M_1) < n^2/4$, 
and let $M_2$ be a normal cover of $M_1$. 
Then the covering $M_2 \to M_1$ is amenable if and only if 
$\lambda_0(M_2) = \lambda_0(M_1)$.
\end{theorem}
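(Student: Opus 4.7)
The strategy is classical. Let $\Gamma = \pi_1(M_1)/\pi_1(M_2)$ denote the deck transformation group of the normal cover $M_2 \to M_1$ and let $F \subset M_2$ be a precompact fundamental domain for its action. I would prove the two implications separately via the variational characterisation $\lambda_0(M) = \inf_f \int|\nabla f|^2/\int f^2$ over compactly supported smooth $f$, noting that the inequality $\lambda_0(M_2) \geq \lambda_0(M_1)$ always holds for coverings.

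For the direction ``$\Gamma$ amenable $\Rightarrow \lambda_0(M_2) = \lambda_0(M_1)$'': since $M_1$ is convex cocompact with $\lambda_0(M_1) < n^2/4$, the critical exponent $\delta$ of the underlying Kleinian group satisfies $\delta > n/2$ and $\delta(n-\delta) = \lambda_0(M_1)$, so by Sullivan a positive $\lambda_0(M_1)$-eigenfunction $\phi$ of $-\Delta$ on $M_1$ exists, obtained by integrating the Poisson kernel against the Patterson--Sullivan measure of dimension $\delta$. Lift it to a function $\tilde\phi$ on $M_2$; it is $\Gamma$-invariant, positive, satisfies $-\Delta\tilde\phi = \lambda_0(M_1)\tilde\phi$, but is not in $L^2$. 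Given a F\o{}lner sequence $A_n \subset \Gamma$ with $|\partial A_n|/|A_n| \to 0$, fix smooth cut-offs $\chi_n$ equal to $1$ on $\bigcup_{\gamma \in A_n}\gamma F$, supported in its $1$-neighbourhood, with $|\nabla\chi_n|$ uniformly bounded. Setting $\psi_n=\chi_n\tilde\phi$, integration by parts against the eigenfunction equation yields
$$\int_{M_2}|\nabla\psi_n|^2 = \lambda_0(M_1)\int_{M_2}\psi_n^2 + \int_{M_2}|\nabla\chi_n|^2\,\tilde\phi^2.$$
By $\Gamma$-invariance, $\int_{\gamma F}\tilde\phi^2$ is a constant $c_F$ independent of $\gamma$, so $\int\psi_n^2 \geq c_F |A_n|$, while the gradient error, supported near translates indexed by $\partial A_n$, is bounded by $C c_F |\partial A_n|$. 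The Rayleigh quotient of $\psi_n$ thus tends to $\lambda_0(M_1)$.

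For the direction ``$\Gamma$ non-amenable $\Rightarrow \lambda_0(M_2) > \lambda_0(M_1)$'': I would use the Doob-type conjugation $f \mapsto \tilde\phi^{-1}f$, which intertwines $-\Delta - \lambda_0(M_1)$ on $M_2$ with a non-negative symmetric divergence-form operator on $L^2(M_2,\tilde\phi^2\,d\mathrm{vol})$; its bottom of spectrum equals $\lambda_0(M_2)-\lambda_0(M_1)$. By the Cheeger inequality for weighted manifolds, it is controlled from below by the weighted Cheeger constant of $(M_2, \tilde\phi^2\,d\mathrm{vol})$. Discretising $M_2$ by the tiles $\{\gamma F\}_{\gamma \in \Gamma}$ and again using $\Gamma$-invariance of $\tilde\phi$, this weighted Cheeger constant is comparable, with constants depending only on $F$, to the combinatorial isoperimetric constant of the Cayley graph of $\Gamma$ with respect to the generating set dictated by the face pairings of $F$. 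Non-amenability forces the latter to be strictly positive, and hence $\lambda_0(M_2) > \lambda_0(M_1)$.

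The principal obstacle is the quantitative discrete-to-continuous transfer between the combinatorial isoperimetric profile of $\Gamma$ and the weighted analytic Cheeger constant of $(M_2, \tilde\phi^2\,d\mathrm{vol})$. It is exactly here that the hypothesis $\lambda_0(M_1) < n^2/4$ is essential, since it guarantees via the Patterson--Sullivan construction the existence of the positive ground state $\phi$ on $M_1$ (equivalently, $\delta > n/2$); without it the Doob conjugation is not available and the reduction collapses. As the authors note, Stadlbauer's later proof via random walks replaces the conjugation step with a Kesten-type criterion on an associated symbolic system, which is precisely why that argument removes the spectral assumption.
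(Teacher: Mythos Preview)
The paper does not prove this statement; Theorem~\ref{brooks} is quoted from Brooks~\cite{bro85} and used as a black box (with the remark that Stadlbauer~\cite{sta11} later removed the spectral hypothesis). There is therefore no in-paper argument to compare your proposal against.

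Your outline is essentially Brooks' original strategy: F\o{}lner truncation of the lifted $L^2$ ground state for the amenable direction, and the Doob $h$-transform followed by a weighted Cheeger inequality discretised along the $\Gamma$-tiling for the non-amenable direction. One point needs correcting. A fundamental domain $F\subset M_2$ for the $\Gamma$-action is, up to a null set, a copy of $M_1$; but a convex cocompact hyperbolic manifold of the second kind is \emph{not} compact---it carries infinite-volume funnel ends---so $F$ is not precompact as you assert, and the naive cut-off construction ``equal to $1$ on $\bigcup_{\gamma\in A_n}\gamma F$, supported in its $1$-neighbourhood'' is not directly available. What rescues the argument is exactly the hypothesis $\lambda_0(M_1)<n^2/4$: it forces $\delta>n/2$, so the Patterson--Sullivan ground state $\phi$ is a genuine $L^2$ eigenfunction on $M_1$ with exponential decay in the ends. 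Hence $c_F=\int_F\tilde\phi^{\,2}=\Vert\phi\Vert_{L^2(M_1)}^2<\infty$, the weighted measure $\tilde\phi^{\,2}\,d\mathrm{vol}$ has finite mass on each tile, and both the cut-off step and the discrete-to-continuous Cheeger comparison go through once one either works over the compact convex core or invokes that decay explicitly. With this adjustment your sketch matches Brooks' proof.
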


Recently, Stadlbauer~\cite{sta11} gave a new proof of Brooks' theorem
by developing further ideas of Kesten \cite{kes59} on random walks
on graphs, which allows one to drop the assumption $\lambda_0(M_1) < n^2/4$.

We also have the well-known correspondence between $\lambda_0$ and $\delta$
given by the following theorem.

\begin{theorem}[Elstrodt \cite{els73}, Patterson \cite{pat76}, 
Sullivan \cite{sul87}]
\label{els-pat-sul}
For any Kleinian group $G$ and $M_G=\B^{n+1}/G$ we have
$$
\lambda_0(M_G) = \left\{
\begin{array}{ll}
\frac{1}{4} \, n^2, & \mbox{if $\delta(G) \leq \frac{1}{2}\,n$} 
\vspace{2mm} \\
\delta(G)(n-\delta(G)), & \mbox{if $\delta(G) \geq \frac{1}{2}\,n$.} 
\end{array}
\right.
$$
\end{theorem}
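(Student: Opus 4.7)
The plan is to establish both inequalities in
\begin{equation*}
\lambda_0(M_G) \;=\; \max_{s \in [\delta(G), n]} s(n-s),
\end{equation*}
a single formula which yields $\tfrac14 n^2$ when $\delta \leq n/2$ (maximum at $s = n/2$) and $\delta(n-\delta)$ when $\delta \geq n/2$ (maximum at $s = \delta$). Two general facts underlie the argument: (a) Sullivan's variational characterisation, namely that on a complete Riemannian manifold $M$,
\begin{equation*}
\lambda_0(M) = \sup\{\lambda \geq 0 : \exists\, u > 0 \text{ with } -\Delta u = \lambda u\};
\end{equation*}
and (b) the Martin-type representation for positive eigenfunctions on $\B^{n+1}$: every positive $\lambda$-eigenfunction of the hyperbolic Laplacian on hyperbolic space is of the form $\tilde u(z) = \int_{\S^n} k(z,\xi)^s \, d\nu(\xi)$ for some finite positive measure $\nu$ on $\S^n$ and $s$ with $\lambda = s(n-s)$, reflecting the fact that each Poisson kernel $k(\cdot,\xi)^s$ is itself an $s(n-s)$-eigenfunction.

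For the lower bound, I would invoke Patterson's construction in its Sullivan generalisation to produce, for every $s \in [\delta, n]$, a $G$-invariant $s$-conformal measure $\mu_o^{(s)}$ on $\S^n$; the case $s > \delta$ is immediate from convergence of the Poincar\'e series at exponent $s$, while $s = \delta$ is the classical Patterson limiting construction recalled in the Preliminaries. The function
\begin{equation*}
\phi_s(z) \;=\; \int_{\S^n} k(z,\xi)^s\, d\mu_o^{(s)}(\xi)
\end{equation*}
is then positive on $\B^{n+1}$, is $G$-invariant by the conformal transformation rule $g^*\mu_{g(z)} = \mu_z$, and satisfies $-\Delta \phi_s = s(n-s)\phi_s$, so it descends to a positive $s(n-s)$-eigenfunction on $M_G$. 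By (a), $s(n-s) \leq \lambda_0(M_G)$ for every such $s$, and letting $s$ vary over $[\delta, n]$ yields $\lambda_0(M_G) \geq \max_{s \in [\delta, n]} s(n-s)$.

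For the upper bound, let $u > 0$ satisfy $-\Delta u = \lambda u$ on $M_G$ and lift it to a positive $G$-invariant $\lambda$-eigenfunction $\tilde u$ on $\B^{n+1}$. By (b) we may write $\tilde u(z) = \int k(z,\xi)^s d\nu(\xi)$ with $\lambda = s(n-s)$, and $G$-invariance of $\tilde u$ forces $\nu$ to be a $G$-invariant $s$-conformal measure. The crucial step, which I expect to be the main obstacle, is to prove that any such measure must have $s \geq \delta$: on the portion of $\nu$ supported in $\Lbd(G)$ this follows from a shadow-lemma type estimate giving $\nu(\mathrm{shadow}(g(o))) \asymp e^{-s d(g(o), o)}$, while on the portion supported in the ordinary set $\S^n \sm \Lbd(G)$ the same conclusion follows from the uniform Jacobian bound $|g'|^s \asymp e^{-s d(g(o), o)}$ on a fundamental domain for the action of $G$ on the ordinary set. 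In either case, summing over $g\in G$ shows that finiteness of $\nu$ forces $\sum_{g \in G} e^{-s d(g(o), o)} < \infty$, hence $s \geq \delta$ by definition of the critical exponent. Combining this with the Martin restriction $s \in [n/2, n]$ from (b) gives $s \in [\max(\delta, n/2), n]$ and therefore $\lambda = s(n-s) \leq \max_{t \in [\delta, n]} t(n-t)$; applying (a) once more yields the matching upper bound on $\lambda_0(M_G)$, completing the proof.
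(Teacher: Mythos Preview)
The paper does not give its own proof of this theorem; it is quoted as a classical result with attribution to Elstrodt, Patterson and Sullivan and then used as a black box (in the proofs of Theorems~\ref{amen-implies-equal} and~\ref{non-amen-equiv-delta<1} and in Example~\ref{panty}). There is therefore nothing in the paper to compare your attempt against.

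That said, your outline is the standard Sullivan argument and is structurally sound. One step is understated: the claim that a $G$-invariant $s$-conformal measure on $\S^n$ exists for every $s>\delta$ is not literally ``immediate from convergence of the Poincar\'e series at exponent $s$''. Convergence of $\sum_g e^{-s\,d(g(o),o)}$ is a statement about atomic measures on the orbit $G(o)\subset\B^{n+1}$, not about measures on the boundary. The quick fix, when the ordinary set is nonempty, is to pick $\xi_0\in\S^n\setminus\Lbd(G)$ and set $\nu=\sum_{g\in G}|g'(\xi_0)|^s\,\delta_{g(\xi_0)}$, which is finite precisely because the Poincar\'e series converges at $s$ and is visibly a $G$-invariant $s$-conformal measure; when $\Lbd(G)=\S^n$ one needs a separate construction. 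In any case, for the lower bound you do not need the whole interval $[\delta,n]$: the Patterson measure at $s=\delta$ handles the case $\delta\geq n/2$, and a single construction at $s=n/2$ (available since then $n/2>\delta$) handles $\delta<n/2$. Your upper bound via the Helgason--Martin representation together with the shadow-lemma estimate forcing $s\geq\delta$ is the correct route.
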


Note that the less intricate implication in Brooks' theorem, namely that
amenabi\-li\-ty implies equality of critical exponents, has been recovered 
and even generalised using different methods by Roblin~\cite{rob05} and 
Sharp~\cite{sha07}. The above results enable us to prove the following theorem. 

\begin{theorem}\label{normalsubgroup}
If $N$ is a non-trivial normal subgroup of a convex cocompact Klein\-ian
group $G$, then 
$$
\dim_H(\Lbd(N)) = \h(\Lbd(N)).
$$
Furthermore, 
$$
G/N: \mbox{ non-amenable } \; \Longleftrightarrow \;\; 
\delta(N) < \dim_H(\Lbd(N)).
$$
\end{theorem}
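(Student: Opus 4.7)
The plan is to reduce the first equality to Proposition~\ref{geomfiniteequal} using the standard fact that a non-trivial normal subgroup of a non-elementary Kleinian group inherits the full limit set, and then to deduce the second statement from Brooks' theorem (augmented by Stadlbauer's extension) together with the Elstrodt--Patterson--Sullivan identity.

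First, since $G$ is non-elementary and $N \vartriangleleft G$ is non-trivial, the limit set $\Lbd(N)$ is a non-empty closed $G$-invariant subset of $\Lbd(G)$; by minimality of $\Lbd(G)$ as a closed $G$-invariant set in $\S^n$, this forces $\Lbd(N) = \Lbd(G)$. In particular $H(\Lbd(N)) = H(\Lbd(G))$, so $\h(\Lbd(N)) = \h(\Lbd(G))$, and applying Proposition~\ref{geomfiniteequal} to the convex cocompact group $G$ yields
$$
\h(\Lbd(N)) = \h(\Lbd(G)) = \dim_H(\Lbd(G)) = \dim_H(\Lbd(N)) = \delta(G),
$$
which is the first claim.

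For the second statement, set $M_N = \B^{n+1}/N$ and $M_G = \B^{n+1}/G$. The natural map $M_N \to M_G$ is a normal Riemannian covering whose deck transformation group is isomorphic to $G/N$. By Theorem~\ref{brooks}, sharpened by Stadlbauer's removal of the hypothesis $\lambda_0(M_G) < n^2/4$, the quotient $G/N$ is amenable if and only if $\lambda_0(M_N) = \lambda_0(M_G)$. Combining this with Theorem~\ref{els-pat-sul}, the strict monotonicity of $s \mapsto s(n-s)$ on $[n/2, n]$, and the inequality $\delta(N) \leq \delta(G)$ (which follows from $N \subset G$), one translates the spectral equality into the equivalence $G/N$ amenable $\iff \delta(N) = \delta(G)$. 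Inserting $\delta(G) = \dim_H(\Lbd(N))$ from the first step gives
$$
G/N \text{ non-amenable} \iff \delta(N) < \dim_H(\Lbd(N)),
$$
as desired.

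The substantive analytic content is housed entirely in Brooks' theorem and Stadlbauer's refinement, so our task reduces to bookkeeping. The main obstacle to watch for in the translation is the regime $\delta(G) \leq n/2$, where $\lambda_0(M_G) = \lambda_0(M_N) = n^2/4$ automatically and the $\lambda_0$-based equivalence becomes uninformative; it is precisely here that one must appeal to Stadlbauer's version, which yields the equivalence $G/N$ amenable $\iff \delta(N) = \delta(G)$ directly in terms of critical exponents, bypassing the spectral bottleneck.
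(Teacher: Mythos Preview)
Your proof is correct and follows essentially the same route as the paper's. The only cosmetic difference is that for the first equality the paper invokes Proposition~\ref{cc-entr-is-delta-X} directly (taking a $G$-orbit as a uniformly distributed set in $H(\Lbd(G))$), whereas you appeal to the packaged Proposition~\ref{geomfiniteequal}; both reduce to the same fact that $\delta(G)=\dim_H(\Lbd(G))=\h(\Lbd(G))$ for convex cocompact $G$. Your explicit handling of the regime $\delta(G)\leq n/2$, and the observation that Stadlbauer's result must be read as an equivalence of critical exponents rather than of $\lambda_0$'s to avoid the degeneracy there, is a useful clarification that the paper's one-line proof leaves implicit.
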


\begin{proof}
We deduce the first statement from 
$\Lbd(N) = \Lbd(G)$ and $\dim_H(\Lbd(G)) = \delta(G) = \Delta(X)$, 
where $X$ is chosen to be some $G$-orbit in $H(\Lbd(G))$, 
together with Proposition~\ref{cc-entr-is-delta-X}.
The second statement follows from Brooks' theorem (Theorem~\ref{brooks}),
enhanced by Stadlbauer~\cite{sta11}. 
\end{proof}

As an illustration of the situation in Theorem~\ref{normalsubgroup} 
we give the following class of examples of
Kleinian groups whose critical exponent is strictly smaller than the 
Hausdorff dimension of their limit set (see also \cite{fs04}).

\begin{example}
\label{dim-gap-expl}
Let $G_0$ and $ G_1$ be two non-elementary convex cocompact
Kleinian groups acting on $\B^{n+1}$ with (open) fundamental domains 
$F_0$, $F_1$ respectively, such that 
$F_0^{{\bf c}}\, \cap F_1^{{\bf c}} = \emptyset$.  
For simplicity, we assume that $G_0$ is freely
generated by $g_1,\ldots, g_k$.
With $G = G_0 \, \ast \, G_1$,
let $\varphi : G \to G_1$ denote the canonical group
homomorphism, and define $N:= {\rm ker}(\varphi )$.  Clearly,
$$
N = \langle hg_ih^{-1}: i=1,...,k, h \in G_1 \rangle, 
$$
and $N$ is the normal subgroup of $G$ generated by $G_0$ in $G$.  
It follows that $G/N$ is isomorphic to $G_1$.  
Observe now that $G/N$ contains a free 
subgroup on two generators, and therefore $G/N$ is not amenable. Hence,
applying the theorem above, it follows that 
$\delta(N) < \dim_H(\Lbd(N))$.
\end{example}

In what follows, we restrict ourselves to 2-dimensional hyperbolic surfaces
and consider the amenability of objects which do not necessarily originate 
from normal coverings.

\begin{definition}
A pair of pants is a complete hyperbolic surface with geodesic boundary
whose interior is homeomorphic to the complement of three points in the
2-sphere. 
\end{definition}

Note that a pair of pants can contain cuspidal regions. Also, 
a connected hyperbolic surface with geodesic boundary is a pair of pants 
if and only if all of its simple closed geodesics are boundary components.
Thus, pairs of pants are exactly those 
hyperbolic surfaces with geodesic boundary which cannot be further simplified by 
cutting along simple closed geodesics.

The following definition contains an existence statement which is formulated
and proven in \cite{ar04} and \cite[Theorem 3.6.2]{hub06}.

\begin{definition}\label{pantsdecomposition}
Every connected, not simply connected hyperbolic surface $S$ has a 
\emph{pants decomposition} $\Pa$ obtained as follows. There exists a family of 
mutually disjoint simple closed geodesics 
$M$ on $S$ such that if $\overline{M}$ denotes the closure of $M$
in $S$ (viewed as a subset of $S$), then the closure of each connected 
component of $S \setminus \overline{M}$ is isometric to either
\begin{enumerate}
\item a pair of pants (thus the term pants decomposition),
\item a half annulus $\{z \in \C : 1 \leq |z| < R\}$ in 
$\{z \in \C : 1/R < |z| < R\}$, for some $1<R<\infty$, 
endowed with the corresponding hyperbolic metric, or
\item a closed half plane in $\B^2$ with the hyperbolic metric.
\end{enumerate}
$\Pa$ is the family of all pairs of pants (components of type (1)), which is in
the convex core $C(S)$ of $S$.
\end{definition}

For details on the pants decomposition of a hyperbolic surface we refer
the reader for instance to \cite[Sections 3.5 and 3.6]{hub06}.

\begin{remark*}
Pants decompositions are not unique as can be easily seen already for the simple
example of a compact surface of genus 2. 
\end{remark*}

\begin{definition}
For a pants decomposition $\Pa$ of the hyperbolic surface $S$,  
the \emph{associated graph} $\Gr = \Gr(\Pa)$ is defined to be
the graph whose vertices are given by the pairs of pants in $\Pa$ 
and which has edges between pairs of pants whenever these are adjacent via 
some boundary simple closed geodesic. We endow the graph $\Gr(\Pa)$ with the
usual metric $d_\Pa$ in which edges have length 1. 
\end{definition}

\begin{remark*}
A pair of pants can be adjacent to itself, and two pairs of pants can be 
adjacent via two distinct simple closed geodesics. In other words, 
the graph $\Gr(\Pa)$ can contain loop edges at some vertices and double edges 
between vertices.
\end{remark*}

\begin{definition}
We say that 
$S$ has \emph{strongly bounded geometry} if 
$S$ admits a pants decomposition $\Pa$
such that there are constants 
$C \geq c > 0$ with the property that the lengths of geodesic boundary 
components of each pair of pants possibly containing a cusp
are bounded from above by $C$ and from below by $c$. We call such $\Pa$
\emph{uniform pants decomposition}.
\end{definition}

\begin{theorem}
\label{pantsgraph-qi-convexcore}
Let $S$ be a hyperbolic surface with strongly bounded geometry and $\Pa$
a uniform pants decomposition of $S$.
If $\Gr$ is the graph associated to $\Pa$, then $\Gr$ is 
quasi-isometric to the convex core $C(S)$ 
without the union of all canonical cusp neighbourhoods. 
In particular, if $\Gr$ and $\Gr'$ are graphs associated to 
different uniform pants decompositions of $S$, 
then $\Gr$ and $\Gr'$ are quasi-isometric.
\end{theorem}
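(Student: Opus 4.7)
The plan is to construct an explicit map from the vertex set of $\Gr$ to the truncated convex core $C^*(S) := C(S) \setminus \bigcup_{p} D_p$ (with $D_p$ ranging over the canonical cusp neighbourhoods of $S$), assigning to each pair of pants a base point in its thick part, and then to verify the quasi-isometry inequalities using two uniform geometric ingredients: a diameter bound for truncated pants and the Collar Lemma.

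First I would establish that the hyperbolic diameter of $P \cap C^*(S)$ is bounded above by a constant $A = A(c,C)$ uniformly in $P \in \Pa$. A pair of pants is determined up to isometry by the lengths of its three boundary components (a cuspidal end being interpreted as length zero), and the truncation of a cuspidal end by the canonical horocycle of length $1$ is similarly determined. Strong bounded geometry confines these lengths to the compact parameter space $(\{0\} \cup [c,C])^3$, so continuity yields the uniform bound. For each $P \in \Pa$ I then choose a base point $z_P \in P \cap C^*(S)$ at definite distance from both $\partial P$ and the horocyclic boundary components, and set $\Phi(P) := z_P$.

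The upper Lipschitz bound is immediate: if $P$ and $P'$ are adjacent vertices of $\Gr$ they share a boundary geodesic of length at most $C$, so $z_P$ and $z_{P'}$ both lie in $(P \cup P') \cap C^*(S)$, a set of hyperbolic diameter at most $2A + C$; induction gives $d(\Phi(P),\Phi(P')) \leq (2A+C)\, d_\Pa(P,P')$. Quasi-surjectivity is equally immediate, since every $x \in C^*(S)$ lies in some $P \cap C^*(S)$ and thus within hyperbolic distance $A$ of $z_P$. For the lower Lipschitz bound, let $\gamma$ be a rectifiable arc in $C^*(S)$ from $z_P$ to $z_{P'}$ of length $R$; the ordered sequence of pants traversed by $\gamma$ gives a path in $\Gr$ from $P$ to $P'$, so $d_\Pa(P,P')$ is at most the number of boundary-geodesic crossings of $\gamma$. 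By the Collar Lemma, each simple closed geodesic of length at most $C$ carries a standard collar of width $w := \log \coth(C/4) > 0$, and the collars of distinct geodesics in $\Pa$ are pairwise disjoint. Each crossing therefore consumes at least $w$ of the length of $\gamma$, giving $d_\Pa(P,P') \leq R/w + 1$.

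The main obstacle is exactly this lower Lipschitz bound: a priori a rectifiable path in $C^*(S)$ could cross many boundary curves without accumulating length, and it is precisely the uniform upper bound $C$ on boundary lengths (via the Collar Lemma) that rules this out; the lower bound $c$ plays a secondary role, ensuring through compactness of the pants parameter space that the truncated pants have uniformly bounded diameter. The ``in particular'' statement then follows because quasi-isometry is an equivalence relation on metric spaces: if $\Gr$ and $\Gr'$ are both quasi-isometric to $C^*(S)$, they are quasi-isometric to each other.
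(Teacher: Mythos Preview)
Your approach is essentially correct and close in spirit to the paper's: both arguments decompose a path between base points into its intersections with the pants and use uniform bounds on the pieces. Your upper Lipschitz bound via induction along an edge path in $\Gr$ is in fact cleaner than the paper's route, which takes the shortest geodesic and then has to compare the number $n$ of pants it meets to $d_\Pa$ via an auxiliary quantity $n_{\min}$.

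There is one imprecision in your lower Lipschitz step. For an \emph{arbitrary} rectifiable arc $\gamma$ in $C^*(S)$, the claim ``each crossing consumes at least $w$ of the length'' is not true as stated: $\gamma$ can cross the same boundary geodesic $\alpha$ twice in quick succession, with the intervening piece as short as you like, so the number of crossings need not be bounded by $R/w$. The disjointness of collars only separates crossings of \emph{distinct} geodesics. The easy fix is to take $\gamma$ to be the geodesic in $S$ (which exists since $C(S)$ is convex) rather than a general rectifiable arc: then the lift $\tilde\gamma$ in $\B^2$ meets each lift of each $\alpha$ at most once, and consecutive crossings downstairs correspond to distinct lifts of (possibly the same) $\alpha$, whose collar strips are disjoint in $\B^2$; hence each piece between crossings has length at least $2w$. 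Since $d_{C^*(S)}(z_P,z_{P'}) \geq d_S(z_P,z_{P'})$, this gives the bound you want. (The paper's proof asserts the analogous piecewise lower bound $c' \leq \ell(\gamma_i)$ without spelling out the mechanism; the Collar Lemma you invoke is exactly what makes it work.)
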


\begin{proof}
The second statement follows directly from the first. The proof of the first
statement is a standard argument which we give for the sake of completeness.
Let $x$ and $y$ be points in $C(S)$ without the union of all 
canonical cusp neighbourhoods, and let $\gamma$ be the shortest geodesic path 
connecting them. Then $\gamma$ decomposes into geodesic paths 
$\gamma_i = \gamma \cap P_i$, $i\in\{1,\ldots,n\}$, where the $P_i$ are 
the pairs of pants from $\Pa$ met by $\gamma$. 
Since $\Pa$ is a uniform pants decomposition, it is not difficult to see that 
there exist constants $C' \geq c' > 0$ depending only on $\Pa$ so that 
$$
c' \leq \ell(\gamma_i) \leq C', \;\; i \in \{1,\ldots,n\}.
$$
Since $d(x,y) = \ell(\gamma) = \sum_{i=1}^n \ell(\gamma_i)$ we thus obtain
$$
c' d_\Pa(P_1,P_n) -2c' \leq d(x,y) \leq C'n.
$$
Now let $n_{\min}$ be the minimal number of pants in $\Pa$ that \emph{must} 
be crossed by some geodesic segment of length $d(x,y)$.
Then we clearly have
$n_{\min} \geq d(x,y)/C'$ and thus
$$
n \leq \frac{d(x,y)}{c'} \leq \frac{C'}{c'} n_{\min} 
\leq \frac{C'}{c'} d_\Pa(P_1,P_n).
$$ 
Therefore,
$$
d(x,y) \leq C'n \leq \frac{(C')^2}{c'} d_\Pa(P_1,P_n),
$$
which finishes the proof.
\end{proof}

By using \cite[Theorem~4.7]{woe00}, which essentially
ensures the invariance of amenability under quasi-isometries, we obtain the
following corollary as a direct consequence of 
Theorem~\ref{pantsgraph-qi-convexcore}.

\begin{corollary}
If $S$ is a hyperbolic surface with strongly bounded geometry, 
and if $\Gr$ and $\Gr'$ are graphs associated to different uniform pants 
decompositions of $S$, then 
$$
\Gr: \mbox{ amenable } \; \Longleftrightarrow \;\; \Gr': \mbox{ amenable.}
$$
\end{corollary}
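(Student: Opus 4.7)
The corollary is essentially a two-line consequence of Theorem~\ref{pantsgraph-qi-convexcore} together with the cited invariance result, so I would structure the plan as a short chain of implications rather than a lengthy argument.

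First, I would apply Theorem~\ref{pantsgraph-qi-convexcore} twice: once to the uniform pants decomposition giving $\Gr$, and once to the one giving $\Gr'$. This yields quasi-isometries
\[
\Gr \;\overset{\mathrm{q.i.}}{\sim}\; C(S) \setminus \textstyle\bigcup_p D_p/\mathrm{Stab}_G(p) \;\overset{\mathrm{q.i.}}{\sim}\; \Gr',
\]
where the middle space (the convex core with canonical cusp neighbourhoods removed) is the same in both cases, since it is defined intrinsically from $S$ and does not depend on a choice of pants decomposition. Because the composition of two quasi-isometries is again a quasi-isometry (the multiplicative and additive constants compose), this gives a quasi-isometry $\Gr \to \Gr'$.

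Second, I would invoke \cite[Theorem~4.7]{woe00}, which is exactly the statement that amenability of a graph (of bounded geometry, which is automatic here since each pair of pants in a uniform decomposition has at most three adjacent pairs of pants, giving uniformly bounded vertex degree) is preserved under quasi-isometry. Applying this to the quasi-isometry $\Gr \to \Gr'$ obtained in the previous step immediately yields the biconditional.

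The only mild subtlety, and what I would flag as the step deserving a sentence of care, is that the intermediate object $C(S) \setminus \bigcup D_p/\mathrm{Stab}_G(p)$ is a geodesic metric space rather than a graph, so one should observe that quasi-isometry between metric spaces composes correctly with quasi-isometry between a graph and a metric space to yield a quasi-isometry between two graphs; this is standard but worth noting to justify invoking the Woess result, which is stated for graphs. Alternatively, one can bypass this by defining, in each pair of pants, a single representative vertex and verifying directly from the proof of Theorem~\ref{pantsgraph-qi-convexcore} that the induced map $\Gr \to \Gr'$ (sending each $P \in \Pa$ to any $P' \in \Pa'$ whose interior meets that of $P$) is quasi-isometric, with constants depending only on the two upper and lower length bounds appearing in the definition of uniform pants decomposition.
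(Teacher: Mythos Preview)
Your proposal is correct and follows exactly the paper's approach: the corollary is stated in the paper as an immediate consequence of Theorem~\ref{pantsgraph-qi-convexcore} (whose second assertion already records that $\Gr$ and $\Gr'$ are quasi-isometric) together with \cite[Theorem~4.7]{woe00}. Your additional remarks about bounded vertex degree and the intermediate metric space are sensible caveats, but the core argument is the same two-step chain the paper uses.
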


\begin{remark*}
\label{pantsgraph-unifdistrset}
Given a hyperbolic surface $S=\B/G$ with strongly bounded geometry, 
and a uniform pants decomposition $\Pa$ of $S$, 
one can produce a uniformly distributed set $X$ in $H(\Lambda(G))$ 
in the following way: First, choose a point in each pair of pants of $\Pa$ 
and take the lifts of all such points to $H(\Lambda(G))$;
secondly, in each horoball $D$ that is a lift of a cusp neighbourhood of $S$,
we take the set of points $X_D$ canonically as in Section \ref{3}.
\end{remark*}

\begin{remark*}
If $N$ is a non-trivial normal subgroup of a convex cocompact Fuchsian
group $G$ acting on $\B^2$ so that the normal cover $S = \B^2/N$ preserves
the structure of a pants decomposition $\Pa$ of $\B^2/G$, 
then $S$ has strongly bounded geometry, and, 
by Theorem~\ref{pantsgraph-qi-convexcore}, 
the graph $\Gr(\Pa)$ associated to $\Pa$ is 
quasi-isometric to the Cayley graph of $G/N$.
\end{remark*}

\section{Amenability and the dimension gap}

In this section, we continue to investigate hyperbolic surfaces $S=\B^{n+1}/G$
with strongly bounded geometry and prove the following two theorems. 
They provide partial information on the relationship between amenability of 
the graph $\Gr=\Gr(\Pa)$ associated to a uniform pants decomposition $\Pa$
and gaps between the three quantities $\delta(G)$, $\dim_H(\Lbd(G))$ and 
$h_c(\Lbd(G))$.

\begin{theorem}
\label{amen-implies-equal}
Let $S$ be a hyperbolic surface given by the Fuchsian group $G$. Assume
$S$ is of infinite type with strongly bounded geometry. 
Let $\Gr$ be the graph associated to a uniform pants decomposition $\Pa$ of $S$. 
If $\Gr$ is uniformly strongly amenable, then all $\delta(G)$, $\dim_H(\Lbd(G))$ 
and $h_c(\Lbd(G))$ are either equal to $1$ or less than $1$ at the same time.
\end{theorem}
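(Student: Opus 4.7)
The plan is to reduce the theorem to proving the single implication $\h(\Lbd(G))=1 \Rightarrow \delta(G)=1$ under the hypotheses. Combined with the chain $\delta(G)\leq \dim_H(\Lbd(G))\leq \h(\Lbd(G))\leq n=1$ from Theorem~\ref{basicinequality}, this implication immediately yields the stated trichotomy: all three invariants equal $1$, or all three are strictly less than $1$. I would convert the desired implication into a spectral statement via Theorem~\ref{els-pat-sul}: in dimension $n=1$ one has $\delta(G)=1$ if and only if $\lambda_0(S)=0$. The task is then to produce a sequence of test functions $\phi_m\in C_c^\infty(S)$ whose Rayleigh quotients tend to $0$.

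The test functions are built from F\o lner subgraphs. Uniform strong amenability provides, for each basepoint $v\in\Gr$ and each $m$, a finite connected subgraph $K^{(m)}\ni v$ of size at most $m$ with $|\partial K^{(m)}|/|K^{(m)}|\leq \varepsilon_m\to 0$. Setting $K_m^*:=\bigcup_{P\in K^{(m)}} P\subset S$, I would take $\phi_m$ to be a smooth cutoff equal to $1$ on $K_m^*$, decaying to $0$ in unit-width collars of each boundary component of $K_m^*$ in $S$, with the standard exponentially decaying extension into any cusp met by $K_m^*$. Strong bounded geometry forces $\int_S \phi_m^2\,dA \asymp |K^{(m)}|$, while $\int_S|\nabla\phi_m|^2\,dA$ decomposes into three contributions: an inter-pants part of size $\asymp|\partial K^{(m)}|$ from simple closed geodesics shared with pants outside $K^{(m)}$; a cusp part of size $O(1)$ per cusp component of $K_m^*$, absorbable into the first; and a funnel part coming from boundary components of $K_m^*$ that lie on the topological boundary of the convex core $C(S)$. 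The first two parts together give a Rayleigh quotient contribution of $O(\varepsilon_m)\to 0$.

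The key remaining step is to control the funnel part using the hypothesis $\h(\Lbd(G))=1$. By Proposition~\ref{cc-entr-is-delta-X} together with the construction of uniformly distributed sets from the pants decomposition (see the remark after Theorem~\ref{pantsgraph-qi-convexcore}), $\h=1$ is equivalent to the pants of $S$ having maximal exponential growth rate $1$ with respect to the hyperbolic distance on $C(S)$. A positive density of funnel-facing pants within large metric balls would reduce the local growth rate strictly below $1$ and contradict $\h(\Lbd(G))=1$. The uniformity of strong amenability then allows the basepoint $v$ of $K^{(m)}$ to be shifted into a region where the fraction of funnel-facing pants within graph distance $m$ of $v$ tends to $0$, so that the funnel contribution to the Rayleigh quotient of $\phi_m$ is $o(1)$. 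Combining everything gives $\lambda_0(S)=0$, and hence $\delta(G)=1$ by Theorem~\ref{els-pat-sul}.

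The main obstacle I foresee is making the funnel-sparsity statement quantitative and compatible with the basepoint shift afforded by uniform strong amenability; this is where the distinction between \emph{amenable} and \emph{uniformly strongly amenable} in Definition~\ref{notions-amenability} becomes essential, as plain amenability would not guarantee F\o lner subgraphs of sufficiently small boundary ratio around a prescribed good basepoint, and neither would mere strong amenability at a single basepoint, which might happen to sit deep inside a funnel-rich region of $S$.
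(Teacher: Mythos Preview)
Your overall architecture is the same as the paper's: reduce to showing $\h(\Lbd(G))=1\Rightarrow\delta(G)=1$, translate to $\lambda_0(S)=0$, and exploit uniformly strong amenability to place F\o lner subgraphs at well-chosen basepoints so that the only boundary contribution comes from inter-pants geodesics. You also correctly isolate the real difficulty as the funnel part, and correctly see that this is exactly where the \emph{uniformity} in uniform strong amenability is spent.

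The gap is in your justification of funnel-sparsity. You argue that a positive density of funnel-facing pants in large metric balls would force the ``local growth rate'' below $1$, contradicting $\h=1$. But $\h$ measures growth of the lifted pants (a uniformly distributed set in $H(\Lbd(G))\subset\B^2$), not growth of the graph $\Gr$ inside $S$; your sentence referencing ``the pants of $S$ having maximal exponential growth rate $1$ with respect to the hyperbolic distance on $C(S)$'' conflates the two. There is no direct density statement to be read off from $\h=1$ in the quotient, and making this precise would amount to reproving a nontrivial porosity estimate.

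The paper closes this gap with a cleaner dichotomy that avoids any density argument. It splits on whether $H(\Lbd(G))$ is weakly $\ell$-tight. If it is, Theorem~\ref{bdneighbouring} gives $\h(\Lbd(G))<1$ directly, so all three invariants are $<1$. If it is not, then there exist points $z_i\in C(S)$ (outside canonical cusps) with $d(z_i,\partial C(S))\to\infty$. Uniform strong amenability now produces, for each $\varepsilon>0$, an integer $m$ and F\o lner subgraphs $K_i^{(m)}\ni v_i$ with $|K_i^{(m)}|\le m$ and $|\partial K_i^{(m)}|/|K_i^{(m)}|<\varepsilon$. Since each $K_i^{(m)}$ contains at most $m$ pants of uniformly bounded diameter (outside cusps), the corresponding geodesic domains $W_i^\sharp$ have \emph{uniformly bounded} diameter; hence for $i$ large they lie entirely in the interior of $C(S)$ and carry \emph{zero} funnel boundary, not merely a sparse one. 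Then $A(W_i^\sharp)=2\pi|K_i^{(m)}|$ and $\ell(\partial W_i^\sharp)\le 2C|\partial K_i^{(m)}|$, so the Cheeger constant $h(S)=\infty$, whence $\lambda_0(S)=0$ by Theorem~\ref{estimates} and $\delta(G)=1$ by Theorem~\ref{els-pat-sul}.

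Two minor points. First, your smooth cutoff construction is fine but unnecessary: the paper works directly with geodesic domains $W^\sharp$ and the Cheeger--Buser inequalities (Theorem~\ref{estimates}), which packages the test-function step. Second, once you use the tightness dichotomy, cusps are harmless because $A(W^\sharp)=2\pi|K|$ by Gauss--Bonnet regardless of cusps, and cusp boundaries do not contribute to $\ell(\partial W^\sharp)$; no separate exponential-decay tail is needed.
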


\begin{theorem}
\label{non-amen-equiv-delta<1}
Let $S$ be a hyperbolic surface given by the Fuchsian group $G$. Assume
$S$ is of infinite type with strongly bounded geometry.  
Let $\Gr$ be the graph associated to a uniform pants decomposition $\Pa$ of $S$. 
If $\Gr$ is non-amenable, then $\delta(G) < 1$.
\end{theorem}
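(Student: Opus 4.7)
The plan is to combine three ingredients: the Elstrodt--Patterson--Sullivan correspondence, Cheeger's isoperimetric inequality, and a geometric discretization that converts the graph-theoretic non-amenability of $\Gr$ into a Cheeger bound for $S$.

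By Theorem~\ref{els-pat-sul} specialized to $n=1$, $\delta(G) < 1$ is equivalent to $\lambda_0(S) > 0$, and by Cheeger's inequality $\lambda_0(S) \geq h(S)^2/4$, where $h(S) = \inf_\Omega \ell(\partial\Omega)/A(\Omega)$ runs over relatively compact open connected $\Omega \subset S$ with smooth boundary. It therefore suffices to prove $h(S) > 0$. The strategy is to exploit the quasi-isometry of $\Gr$ with $C(S)$ minus the canonical cusp neighbourhoods (Theorem~\ref{pantsgraph-qi-convexcore}) to transport non-amenability of $\Gr$ to a uniform positive lower bound on this Cheeger ratio.

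For the discretization step, let $\eta > 0$ be the non-amenability constant of $\Gr$, and fix a connected $\Omega \subset S$. Associate the finite subgraph
$$
K_\Omega = \bigl\{\, P \in \Pa \,:\, P \cap \Omega \neq \emptyset \,\bigr\} \subset \Gr,
$$
which is connected because $\Omega$ is. Split $K_\Omega = K^+ \cup K^-$ where $K^+ = \{P \in K_\Omega : A(P \cap \Omega) \geq \pi\}$, and recall that $A(P) = 2\pi$ for every $P \in \Pa$ by Gauss--Bonnet. I will establish two estimates with constants depending only on the uniform length bounds $c \leq \ell(\gamma) \leq C$ of the decomposing simple closed geodesics:
\begin{itemize}
\item[(i)] $A(\Omega) \leq 2\pi\,|K^+| + \ell(\partial \Omega) + A_{\mathrm{fn}}$. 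For $P \in K^-$ one has $A(P \cap \Omega) < \pi \leq A(P)/2$, and since $P$ embeds locally in $\H^2$ (Cheeger constant $1$), each contributes $A(P \cap \Omega) \leq \ell(\partial(P\cap\Omega) \cap \mathrm{int}\,P)$, summing to at most $\ell(\partial\Omega)$; the term $A_{\mathrm{fn}}$ collects the part of $\Omega$ lying in the half-annulus (funnel) components of $S \setminus \overline{M}$.
\item[(ii)] $\ell(\partial\Omega) \geq c_1\,|\partial K_\Omega|$. Each edge of $\partial K_\Omega$ corresponds to a separating geodesic $\gamma$ with $\ell(\gamma) \in [c,C]$; the collar lemma yields a tubular neighbourhood of $\gamma$ of uniform width that $\partial\Omega$ must cut through, since $\Omega$ meets the adjacent pants in $K_\Omega$ but not the pants outside $K_\Omega$. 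This contributes length $\geq c_1$, and the standard collars are pairwise disjoint by strongly bounded geometry.
\end{itemize}
Combining (i), (ii) with $|\partial K_\Omega| \geq \eta\,|K_\Omega| \geq \eta\,|K^+|$ yields
$$
\ell(\partial\Omega) \,\geq\, c_1 \eta\,|K^+| \,\geq\, \frac{c_1\eta}{2\pi}\,\bigl(A(\Omega) - \ell(\partial\Omega) - A_{\mathrm{fn}}\bigr).
$$
Because each half-annulus funnel has uniformly positive local Cheeger constant under strongly bounded geometry, one also gets $A_{\mathrm{fn}} \leq C_2\,\ell(\partial\Omega)$. Rearranging then produces $\ell(\partial\Omega) \geq h_0\,A(\Omega)$ for some $h_0 = h_0(c,C,\eta) > 0$ independent of $\Omega$, establishing $h(S) \geq h_0$.

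The main obstacle is estimate (ii): not the existence of a witnessing cut for each graph edge, which is a direct collar-lemma computation, but rather the avoidance of double-counting when many graph-boundary edges might in principle share the same piece of $\partial\Omega$. The disjointness of standard collars around distinct decomposing geodesics, guaranteed by the uniform lower and upper length bounds, is what makes the contributions add. A secondary delicate point is that some $P \in K^+$ may contain arcs of $\partial\Omega$ that loop inside $P$ and cut off a small subregion; these arcs are not counted in (ii) but are precisely what estimate (i) controls through the local $\H^2$-isoperimetric inequality. Handling these two bookkeeping points carefully, along with the precise implementation of the funnel reduction, is the technical heart of the proof.
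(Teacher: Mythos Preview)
Your overall architecture is the same as the paper's: reduce $\delta(G)<1$ to $\lambda_0(S)>0$ via Theorem~\ref{els-pat-sul}, then to a Cheeger-type bound via Theorem~\ref{estimates}, and convert non-amenability of $\Gr$ into that bound. The paper, however, does not attack arbitrary smooth domains $\Omega$ directly. It passes through the intermediate constants $h^{\Pa}(S)$ (over finite unions of pants) and $h^*(S)$ (over geodesic domains): non-amenability of $\Gr$ gives $h^{\Pa}(S)<\infty$ by a two-line count, and then Lemma~\ref{h*-hP} and Proposition~\ref{matsuzaki} (the latter imported from \cite{fr90}, \cite{mat05}) lift this to $h(S)<\infty$. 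That factorisation is precisely what lets the paper avoid the discretization difficulties you run into.

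Your estimate (ii) is false as stated, and the problem is not just bookkeeping. Let $\Omega$ be a disk of hyperbolic radius $\varepsilon$ centred on a decomposing geodesic $\gamma_{12}$ between two pants $P_1,P_2$, with $\partial\Omega$ transverse to $\gamma_{12}$. Then $K_\Omega=\{P_1,P_2\}$, and generically both vertices lie in $\partial K_\Omega$ (each has a neighbour outside $K_\Omega$), so $|\partial K_\Omega|=2$; yet $\ell(\partial\Omega)\to 0$ as $\varepsilon\to 0$. Your collar justification does not apply: the fact that $\Omega$ meets $P\in K_\Omega$ while an adjacent $P'\notin K_\Omega$ does \emph{not} force $\partial\Omega$ anywhere near the collar of $\gamma=P\cap P'$; $\Omega$ may sit far from $\gamma$ inside $P$. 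In this particular counterexample the final inequality survives trivially because $|K^+|=0$, but the logical chain $\ell(\partial\Omega)\geq c_1|\partial K_\Omega|\geq c_1\eta|K_\Omega|\geq c_1\eta|K^+|$ you rely on is broken. A repair would need to apply the non-amenability bound to a different subgraph (for instance the pants wholly contained in $\Omega$) and then argue carefully at its frontier; carrying this through essentially amounts to rediscovering the reduction to geodesic domains that the paper imports as Proposition~\ref{matsuzaki}. A secondary imprecision: in estimate (i) you write ``$P$ embeds locally in $\H^2$ (Cheeger constant $1$)'', but a pair of pants is not simply connected, and what you actually need is a uniform relative (Neumann) isoperimetric constant for pants with boundary lengths in $[c,C]$; this is true under strongly bounded geometry but is not the constant $1$ and requires its own argument.
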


In order to prove Theorems \ref{amen-implies-equal} and 
\ref{non-amen-equiv-delta<1}, we shall need some prerequisites.

\begin{definition}
\label{cheeger-ct}
The \emph{isoperimetric constant} or \emph{Cheeger constant} of a 
hyperbolic surface $S$ is defined as
$$
h(S) = \sup_W \frac{A(W)}{\ell(\partial W)},
$$
where the supremum is taken over all relatively compact domains 
$W \subset S$ with smooth boundary $\partial W$. 
Here, $A(W)$ denotes the hyperbolic area of $A$ and 
$\ell(\partial W)$ the hyperbolic length of the boundary. 
\end{definition}

\begin{remark*}
\label{cheeger-thm}
Cheeger's constant satisfies $h(S) \geq 1$ for any hyperbolic surface $S$.
See \cite{am99} for a proof not relying on Theorems~\ref{els-pat-sul} 
and \ref{estimates}.
\end{remark*}

Fundamental estimates of $\lambda_0(S)$ in terms of $h(S)$ are given as follows
by Cheeger \cite{che70} and Buser \cite{bus82}.

\begin{theorem}\label{estimates}
For every hyperbolic surface $S$ we have
$$
\frac{1}{4 \, h(S)^2} \leq \lambda_0(S) \leq \frac{B}{h(S)}
$$
for some universal constant $B>0$.
\end{theorem}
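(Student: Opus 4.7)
The plan is to prove the two inequalities by independent classical arguments: Cheeger's method for the lower bound and Buser's test-function construction for the upper bound. Both are standard, but the conventions differ from the usual ones because in this paper $h(S)$ is defined as $\sup A(W)/\ell(\partial W)$ rather than its reciprocal; I would therefore translate between conventions as I go.

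For the lower bound $\lambda_0(S) \geq 1/(4 h(S)^2)$, I would follow Cheeger's classical argument via the coarea formula. Given a nonnegative test function $f \in C_c^\infty(S)$, set $g = f^2$ and apply the coarea formula
$$\int_S |\nabla g|\, dA = \int_0^\infty \ell(\partial \{g \geq t\})\, dt.$$
For almost every $t$ the superlevel set is a relatively compact smooth domain, so by Definition \ref{cheeger-ct} we have $\ell(\partial\{g \geq t\}) \geq A(\{g \geq t\})/h(S)$. Integrating and using the layer-cake identity $\int_0^\infty A(\{g \geq t\})\, dt = \int_S g\, dA$ yields $\int_S |\nabla g|\, dA \geq h(S)^{-1} \int_S f^2 \, dA$. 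Since $|\nabla g| = 2|f||\nabla f|$, Cauchy--Schwarz gives $\int_S |\nabla g|\, dA \leq 2 (\int f^2)^{1/2}(\int |\nabla f|^2)^{1/2}$. Combining and squaring bounds the Rayleigh quotient from below by $1/(4 h(S)^2)$, and taking the infimum over $f$ gives the lower bound.

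For the upper bound $\lambda_0(S) \leq B/h(S)$, I would use Buser's construction of explicit test functions. Choose a smooth relatively compact domain $W$ with $A(W)/\ell(\partial W) \geq h(S) - \varepsilon$, and build a cut-off function $f$ equal to $1$ on $W$ minus an inward collar of $\partial W$ of width $r \sim 1/h(S)$ and decaying linearly across that collar. On the collar, $|\nabla f|^2 \leq 1/r^2$, and normal-coordinate volume comparison in curvature $-1$ (Bishop--Gromov style) shows that the collar has area at most a universal constant times $r\, \ell(\partial W)$, provided $r$ is bounded. Thus the Rayleigh quotient satisfies
$$\frac{\int_S |\nabla f|^2 \, dA}{\int_S f^2 \, dA} \leq \frac{C\, \ell(\partial W)/r}{A(W) - C\, r\, \ell(\partial W)},$$
and optimizing over $r$ in terms of $\ell(\partial W)/A(W) \leq 1/(h(S)-\varepsilon)$ yields the bound $B/h(S)$ for a universal $B$. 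Letting $\varepsilon \to 0$ completes the proof.

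The main obstacle is the upper bound: in the collar construction one must ensure that the normal exponential map from $\partial W$ remains a diffeomorphism up to width $r$, so that the volume comparison applies without multiple counting. Buser's trick is to restrict attention to the set of points in the collar reached by a unique minimizing geodesic normal to $\partial W$; in negative curvature these normal geodesics spread apart, which both simplifies the volume bound and makes the relevant constants depend only on the curvature lower bound $-1$, hence universal. The lower bound, by contrast, is essentially automatic from the coarea formula and requires no geometric finesse.
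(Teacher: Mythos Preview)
The paper does not prove this theorem; it is stated with attribution to Cheeger \cite{che70} and Buser \cite{bus82} and used as a black box. Your sketch of Cheeger's coarea argument for the lower bound is correct and standard, once the convention $h(S)=\sup A(W)/\ell(\partial W)$ is taken into account as you do.

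For the upper bound, your outline is in the spirit of Buser's construction, but the stated collar width $r \sim 1/h(S)$ does not produce the claimed bound. With that choice and $\ell(\partial W)/A(W) \approx 1/h(S)$, the energy is of order $(1/r^2)\cdot r\,\ell(\partial W) \sim h(S)\,\ell(\partial W) \sim A(W)$, while the $L^2$-mass is of order $A(W)$, so the Rayleigh quotient is only bounded by a constant, not by $B/h(S)$. The simple collar argument does work if one fixes $r$ of order~$1$: then the collar area is at most $C\,\ell(\partial W)$ (with $C$ depending only on the curvature bound $-1$), and the Rayleigh quotient is at most $C\,\ell(\partial W)/(A(W)-C\,\ell(\partial W)) \sim C/h(S)$ once $h(S)$ is large; the remaining bounded range of $h(S)$ is handled by the trivial bound $\lambda_0(S) \leq \lambda_0(\B^2)=1/4$. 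Alternatively, and this is what Buser actually proves in Ricci curvature $\geq -(n-1)$, one has $\lambda_0 \leq C_1 h_{\mathrm{usual}} + C_2 h_{\mathrm{usual}}^2$ with $h_{\mathrm{usual}}=1/h(S)$, and the form stated here then follows from $h(S)\geq 1$ (the remark after Definition~\ref{cheeger-ct}). Either route repairs your sketch; as written, the choice of $r$ and the claimed outcome of the optimisation are inconsistent.
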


\begin{definition}
A \emph{geodesic domain} $W^*$ in the hyperbolic surface $S$ is a 
domain of finite area so that $\partial W^*$ consists of finitely many 
simple closed geodesics; define the isoperimetric constant given by such
domains as
$$
h^*(S) = \sup_{W^*} \frac{A(W^*)}{\ell(\partial W^*)},
$$
where the supremum is taken over all geodesic domains $W^*$ in $S$.
\end{definition}

By definition, a geodesic domain $W^*$ in $S$ does not contain any funnels 
(i.e. half annuli in (2) of Definition \ref{pantsdecomposition})
of $S$. Also, geodesic domains are not necessarily relatively compact, 
as they may contain cusps. Nonetheless, we have the following estimate, 
which is essentially a direct consequence of the definitions and \cite{fr90} and
\cite[Theorem~7]{mat05}.

\begin{proposition}
\label{matsuzaki}
Let $S$ be a hyperbolic surface of infinite type. 
Then,
$$
h^*(S) \leq h(S) \leq h^*(S) + 1.
$$
\end{proposition}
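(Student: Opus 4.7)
The plan is to prove the two inequalities separately, the first by approximation and the second by modifying the boundary of a general smooth domain into a geodesic one.

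For the lower bound $h^*(S) \leq h(S)$, I would exhaust any finite-area geodesic domain $W^*$ by a family of relatively compact smooth domains. Since $W^*$ has finite area, it contains only finitely many cusps; for each cusp, truncate $W^*$ by a horocycle deep inside the cusp (at hyperbolic distance $t$ from a fixed reference horocycle) and locally smooth the finitely many corners where each horocycle meets $\partial W^*$. Call the resulting relatively compact smooth domain $W_t$. As $t \to \infty$, the removed cuspidal tails have combined area of order $e^{-t}$, and the added horocyclic arcs plus corner corrections contribute total length of order $e^{-t}$. Therefore $A(W_t)/\ell(\partial W_t) \to A(W^*)/\ell(\partial W^*)$, and taking the supremum over all $W^*$ gives $h^*(S) \leq h(S)$.

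For the upper bound $h(S) \leq h^*(S)+1$, given an arbitrary relatively compact smooth domain $W$, the plan is to construct a geodesic domain $W^*$ such that $\ell(\partial W^*) \leq \ell(\partial W)$ and $A(W) \leq A(W^*) + \ell(\partial W)$, from which $A(W)/\ell(\partial W) \leq h^*(S)+1$ follows. First, I would discard from consideration the null-homotopic components of $\partial W$ (which bound disks in $S$ and can be absorbed into $W$ without worsening the ratio) and the components homotopic into cusps (which can be pushed into the corresponding cusp and similarly handled). Each remaining component of $\partial W$ is freely homotopic to a unique simple closed geodesic, and I would simultaneously replace these components by their geodesic representatives to obtain a geodesic domain $W^*$. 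Invoking \cite{fr90} and \cite[Theorem~7]{mat05}, the symmetric difference $W \triangle W^*$ is contained in a union of annular collars around the replaced curves, whose total hyperbolic area is bounded by $\ell(\partial W)$, giving the required $+1$ term.

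The principal technical obstacle is the geometric and topological coordination of the replacement step: distinct components of $\partial W$ might be freely homotopic to geodesics that coincide or cross, and the collars around short geodesics can be so wide that the naive area comparison fails. Ensuring that the geodesic replacements remain mutually disjoint and simple, and that the area swap is controlled uniformly by $\ell(\partial W)$ rather than by a topological invariant of $W$, is exactly what is packaged into \cite[Theorem~7]{mat05}; the infinite-type hypothesis on $S$ is used here to rule out exceptional global configurations (such as $W$ filling out a closed subsurface) that would otherwise force a topology-dependent correction in place of the uniform constant $1$. Once this technical input is in hand, the Gauss--Bonnet bookkeeping closing the estimate is routine.
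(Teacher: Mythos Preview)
The paper does not supply a proof of this proposition; it merely states it as ``essentially a direct consequence of the definitions and \cite{fr90} and \cite[Theorem~7]{mat05}.'' Your proposal invokes exactly the same references for the hard direction $h(S)\le h^*(S)+1$ and adds a correct, self-contained cusp-truncation argument for $h^*(S)\le h(S)$, so it is in full agreement with---indeed, more explicit than---what the paper offers.

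One minor remark: your suggestion that the infinite-type hypothesis is what secures the uniform constant~$1$ (by ruling out ``$W$ filling out a closed subsurface'') is probably not the operative reason. The estimate packaged in \cite[Theorem~7]{mat05} does not rely on infinite type, and the hypothesis appears here mainly because that is the ambient setting of the paper and because it guarantees that geodesic domains exist, so that $h^*(S)$ is well defined. This is a point of interpretation rather than a gap in your argument.
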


\begin{definition}
Let $S$ be a hyperbolic surface of infinite type with strongly bounded geometry, 
and $\Pa$ some uniform pants decomposition of $S$. Set
$$
h^{\Pa}(S) = \sup_{W^\sharp} \frac{A(W^\sharp)}{\ell(\partial W^\sharp)},
$$
where the supremum is taken over all finite connected unions $W^\sharp$ of pants  
from the pants decomposition $\Pa$. 
\end{definition}

\begin{lemma}
\label{h*-hP}
Let $S$ be a hyperbolic surface of infinite type with strongly bounded geometry, 
and $\Pa$ some uniform pants decomposition of $S$. Then there is a 
constant $b = b(\Pa) > 0$ so that
$$
h^*(S) \leq b \: h^{\Pa}(S).
$$
\end{lemma}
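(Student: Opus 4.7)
The plan is to associate, to each geodesic domain $W^*$ in $S$, a union of pants $W^\sharp$ from $\Pa$ with comparable isoperimetric ratio. I declare a pant $P \in \Pa$ to be \emph{included} if $A(W^* \cap P) \geq A(P)/2 = \pi$, and set $W^\sharp$ to be the union of all included pants; only finitely many are included since $A(W^*) < \infty$. The set $W^\sharp$ need not be connected, but if it splits into connected components $W^\sharp_j$, then $A(W^\sharp)/\ell(\partial W^\sharp) \leq \max_j A(W^\sharp_j)/\ell(\partial W^\sharp_j) \leq h^{\Pa}(S)$, so it suffices to work with $W^\sharp$ as a whole and compare with the best component at the end.

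The central geometric input is an arc-length lemma: by strongly bounded geometry (pants boundaries of length in $[c,C]$), there exists a constant $c_0 = c_0(\Pa) > 0$ such that every geodesic arc $\beta \subset P$ with endpoints on $\partial P$ and not contained in $\partial P$ satisfies $\ell(\beta) \geq c_0$. For arcs joining distinct boundary components, this follows from the collar lemma, which gives a uniform lower bound on the distance between boundary geodesics of length $\geq c$. For arcs returning to the same boundary component, essentiality in $P$ forces the arc to encircle one of the other boundary components or a cusp, and the corresponding standard or horocyclic collar provides the lower bound.

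Two estimates follow. First, if $P$ is excluded and $W^* \cap P \neq \emptyset$, then $W^* \cap P$ is a proper non-empty open subset of $P$, so $\partial W^* \cap \mathrm{int}(P)$ contains at least one essential geodesic arc of length $\geq c_0$; since arcs in distinct pants are disjoint, the number $N$ of such pants satisfies $N \leq \ell(\partial W^*)/c_0$. Consequently
\begin{equation*}
A(W^*) \; \leq \; A(W^\sharp) + \pi N \; \leq \; A(W^\sharp) + \frac{\pi}{c_0}\,\ell(\partial W^*).
\end{equation*}
Second, every $\gamma \in \partial W^\sharp$ is either contained in $\partial W^*$, or is adjacent to some pant containing an essential arc of $\partial W^*$: if neither of the two pants adjacent to $\gamma$ contains such an arc, then $A(W^* \cap P) \in \{0, 2\pi\}$ on each side by the no-boundary argument, so the included side has $P \subset \overline{W^*}$ and the excluded side has $P' \cap W^* = \emptyset$, forcing $\gamma \subset \partial W^*$ (the case where $\gamma$ lies on $\partial C(S)$ is analogous). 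Since each pant has at most $3$ boundary curves and $\ell(\gamma) \leq C$, this gives $\ell(\partial W^\sharp) \leq \ell(\partial W^*) + 3C N \leq K\,\ell(\partial W^*)$ with $K = 1 + 3C/c_0$.

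Dividing the two estimates yields $A(W^*)/\ell(\partial W^*) \leq K \cdot A(W^\sharp)/\ell(\partial W^\sharp) + \pi/c_0 \leq K\,h^\Pa(S) + \pi/c_0$. The additive constant is absorbed via the trivial lower bound $h^\Pa(S) \geq A(P)/\ell(\partial P) \geq 2\pi/(3C)$ obtained by taking $W^\sharp$ to be any single pant, giving $h^*(S) \leq b\,h^\Pa(S)$ for a constant $b = b(\Pa)$. The main obstacle is the arc-length lemma in full generality; in particular, handling cuspidal pants requires supplementing the standard collar lemma with horocyclic collar estimates to secure the uniform lower bound $c_0$.
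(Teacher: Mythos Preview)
Your argument is essentially correct, but it takes a longer road than the paper's. The paper defines $W^\sharp$ simply as the union of \emph{all} pants of $\Pa$ that meet $W^*$. This buys two things immediately: $W^* \subset W^\sharp$, so $A(W^*)\le A(W^\sharp)$ with no additive error to absorb; and $W^\sharp$ is automatically connected, since $W^*$ is. The paper then lets $n$ be the number of pants in $W^\sharp$ meeting $\partial W^*$, invokes exactly your arc-length lemma to get $\ell(\partial W^*)\ge a\,n$, and observes that only those $n$ pants can contribute to $\partial W^\sharp$ (a pant entirely inside $W^*$ has all its neighbours in $W^\sharp$), with at most two boundary curves each, giving $\ell(\partial W^\sharp)\le 2Cn$. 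Combining these yields the clean multiplicative bound $A(W^*)/\ell(\partial W^*)\le (2C/a)\,A(W^\sharp)/\ell(\partial W^\sharp)$ directly. Your half-area inclusion criterion forces you to track the area lost to excluded pants and then absorb the resulting additive $\pi/c_0$ via the lower bound $h^{\Pa}(S)\ge 2\pi/(3C)$; this works but is unnecessary.

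One small slip to watch: in your second estimate you bound the number of boundary curves of $W^\sharp$ not lying in $\partial W^*$ by $3CN$, where $N$ was defined as the number of \emph{excluded} pants meeting $W^*$. But the pant adjacent to such a $\gamma$ that carries an arc of $\partial W^*$ may well be the \emph{included} one. You really want $N'$, the total number of pants whose interior meets $\partial W^*$; the same arc-length lemma gives $N'\le \ell(\partial W^*)/c_0$, so the estimate survives with $N'$ in place of $N$. Also note the edge case $W^\sharp=\emptyset$, where dividing by $\ell(\partial W^\sharp)$ is illegal; there your first estimate alone gives $A(W^*)/\ell(\partial W^*)\le \pi/c_0$, which is again absorbed by the lower bound on $h^{\Pa}(S)$.
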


\begin{proof}
Take an arbitrary geodesic domain $W^*$ in $S$.
For this $W^*$, let $W^\sharp$ be the finite connected union of pants from 
$\Pa$ that consists of all pants having non-empty intersection with $W^*$.
Since $W^* \subset W^\sharp$, we have $A(W^*) \leq A(W^\sharp)$.
On the other hand, we compare $\ell(\partial W^*)$ with $\ell(\partial W^\sharp)$
by considering the number of pants in $W^\sharp$ intersecting $\partial W^*$.
Let $n \geq 1$ be this number. There is a constant $a=a(\Pa)>0$ such that
every simple closed geodesic in $S$ has at least one piece of length greater
than or equal to $a$ in common with any pair of pants from $\Pa$ it
intersects.
It follows that $\ell(\partial W^*) \geq an$. 
To each pair of pants in $W^\sharp$ intersecting $\partial W^*$,
there correspond at most two boundary components of $\partial W^\sharp$,
and the length of each boundary component is bounded from above by $C=C(\Pa)$,
which is the constant in the strongly bounded geometry condition.
Hence $\ell(\partial W^\sharp) \leq 2nC$.
Therefore we have
$$
\frac{A(W^*)}{\ell(\partial W^*)} \leq \frac{A(W^\sharp)}{an} \leq 
\frac{2C}{a} \frac{A(W^\sharp)}{\ell(\partial W^\sharp)}.
$$
By setting $b=2C/a$ and taking the supremum over all such $W^*$ and 
$W^\sharp$, we obtain the required inequality.
\end{proof}

We are now in the position to prove Theorems~\ref{amen-implies-equal} 
and \ref{non-amen-equiv-delta<1}.

\begin{proof}[Proof of Theorem~\ref{amen-implies-equal}]
We consider the case where the convex hull $H(\Lbd(G))$ of the limit set 
of $G$ is weakly tight.
This means that there exists a constant $\ell \geq 0$ 
such that every point $z$ in the
convex core $C(S)$ except in the canonical cusp neighbourhoods is within
distance $\ell$ from the boundary $\partial C(S)$.
If this is satisfied, then Theorem~\ref{bdneighbouring} asserts that 
$h_c(\Lbd(G))<1$. Since $\delta(G) \leq \dim_H(\Lbd(G)) \leq h_c(\Lbd(G))$ by 
Theorem~\ref{basicinequality}, these quantities are all less than $1$ 
in this case.

Conversely, assume that the above condition is not satisfied.
We will prove that $\delta(G)=1$ in this case. Then we also have
$\dim_H(\Lbd(G))=h_c(\Lbd(G))=1$ and the proof is complete.
By our assumption, we have a sequence of points $z_i$ in $C(S)$ without 
the union of canonical cusp neighbourhoods such that the distance from 
$z_i$ to $\partial C(S)$ tends to $\infty$ as $i \to \infty$.

Let $v_i$ be the vertex of the graph $\Gr$ corresponding to the pair of pants 
that contains $z_i$. Uniformly strong amenability of $\Gr$ implies that, 
for any $\varepsilon >0$, there exist a positive integer $m$ and, for each $i$,
a finite connected subgraph $K^{(m)}_i$ with $v_i \in K^{(m)}_i$ and
$|K^{(m)}_i| \leq m$, such that 
$$
\frac{|\partial K^{(m)}_i|}{|K^{(m)}_i|}<\varepsilon.
$$
Let $W_i^\sharp$ be a finite connected union of the pants from $\Pa$
corresponding to the subgraph $K^{(m)}_i$. Note that $W_i^\sharp$ contains $z_i$
and $W_i^\sharp$ consists of at most $m$ pants from $\Pa$. 
Uniformity of the pants decomposition $\Pa$ implies that each pair of pants 
in $\Pa$ has a bounded diameter except for canonical cusp neighbourhoods. 
Hence the diameters of the $W_i^\sharp$ without canonical cusp neighbourhoods 
are uniformly bounded independently of $i$.
Since the distance from $z_i$ to $\partial C(S)$ tends to $\infty$ as 
$i \to \infty$, the subsurfaces $W_i^\sharp$ are contained in the interior 
of $C(S)$ for all sufficiently large $i$.

We express $A(W_i^\sharp)$ and $\ell(\partial W_i^\sharp)$ by using 
$|K^{(m)}_i|$ and $|\partial K^{(m)}_i|$ for a sufficiently large $i$. 
It is easy to see that $A(W_i^\sharp)=2\pi |K^{(m)}_i|$.
By the fact that $W_i^\sharp$ is contained in the interior of $C(S)$,
the pairs of pants in $W_i^\sharp$ which correspond to elements of 
$\partial K^{(m)}_i$ are the only ones contributing to $\partial W_i^\sharp$, 
and have at most two boundary curves in $\partial W_i^\sharp$, that is,
the number of the boundary components of $\partial W_i^\sharp$ is 
bounded by $2|\partial K^{(m)}_i|$. Hence
$$
\frac{A(W_i^\sharp)}{\ell(\partial W_i^\sharp)}
\geq \frac{2\pi |K^{(m)}_i|}{2C |\partial K^{(m)}_i|}
> \frac{\pi}{C \varepsilon}.
$$
However, since $\varepsilon$ can be taken arbitrarily small, 
this estimate actually implies that $h(S)=\infty$. 
Then, by Theorems~\ref{els-pat-sul} and \ref{estimates}, 
we conclude that $\delta(G)=1$.
\end{proof}

\begin{proof}[Proof of Theorem~\ref{non-amen-equiv-delta<1}]
For the uniform pants decomposition $\Pa$ of $S$,
we consider some finite connected union $W^\sharp$ of pants 
from $\Pa$. Also,
for the graph $\Gr$ associated to $\Pa$, let $K$ be the subgraph of $\Gr$ 
associated to $W^\sharp$. 
Note that $K$ is connected and, by definition of 
$\Gr$, no pairs of pants are discarded while constructing $K$ from 
$W^\sharp$. 
The area $A(W^\sharp)$ is given by $2\pi |K|$.

The pairs of pants in $W^\sharp$ which correspond to elements of 
$\partial K$ have at least one 
boundary curve in 
$\partial W^\sharp$. Also, note that pairs of pants that produce
loop edges at vertices in $K$ do not contribute to 
$\partial W^\sharp$, for the corresponding vertices of $K$ cannot be in
$\partial K$. 
Therefore, the fact that $\Pa$ is a uniform pants decomposition implies 
$$
\ell(\partial W^\sharp) \geq c \, |\partial K|.
$$
But $\Gr$ was assumed to be non-amenable, and we conclude that
$$
h^{\Pa}(S) \leq \frac{2\pi}{c} \sup_K \frac{|K|}{|\partial K|} < \infty,
$$
where the supremum is taken over all finite connected subgraphs $K$ of
$\Gr$. By Proposition~\ref{matsuzaki} and Lemma~\ref{h*-hP} 
it now follows that
$$
h(S) \leq h^*(S) + 1 \leq b \: h^{\Pa}(S) + 1 \leq
\frac{2\pi b}{c} \sup_K \frac{|K|}{|\partial K|} + 1 < \infty.
$$
Finally, Theorem~\ref{estimates} implies that 
$\lambda_0(S) > 0$. Employing Theorem~\ref{els-pat-sul}, this
finishes the proof.
\end{proof}

We have the following corollary to both Theorems~\ref{amen-implies-equal} 
and \ref{non-amen-equiv-delta<1}.

\begin{corollary}\label{anyway-true}
Let $S$ be a hyperbolic surface given by the Fuchsian group $G$. Assume that
$S$ is of infinite type with strongly bounded geometry.  
Let $\Gr$ be the graph associated to some uniform pants decomposition 
$\Pa$ of $S$. 
Assume that $h_c(\Lbd(G))=1$ and $\Gr$ is homogeneous.
Then
$$
\Gr: \mbox{ non-amenable } \; \Longleftrightarrow \;\; \delta(G) < 1 .
$$
\end{corollary}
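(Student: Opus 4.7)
The plan is to derive the corollary directly from Theorems~\ref{amen-implies-equal} and \ref{non-amen-equiv-delta<1}, using the coincidence of amenability notions for homogeneous graphs.

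First I would dispose of the implication ``$\Gr$ non-amenable $\Rightarrow \delta(G)<1$,'' which is immediate from Theorem~\ref{non-amen-equiv-delta<1}. Note that this direction does not require the assumption $\h(\Lbd(G))=1$ nor homogeneity.

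For the converse, I would prove the contrapositive: if $\Gr$ is amenable, then $\delta(G)=1$. Here is where the hypothesis of homogeneity enters. As stated in the discussion following Conjecture~\ref{non-amenab-conj-intro} (and to be explained in Section~\ref{amenab-pants-section}), for homogeneous graphs the three notions of amenability, strong amenability, and uniformly strong amenability coincide. Thus, assuming $\Gr$ is amenable, $\Gr$ is in fact uniformly strongly amenable. Theorem~\ref{amen-implies-equal} then applies and yields that $\delta(G)$, $\dim_H(\Lbd(G))$ and $\h(\Lbd(G))$ are either all equal to $1$ or all strictly less than $1$. Since by hypothesis $\h(\Lbd(G))=1$, the trichotomy forces $\delta(G)=1$. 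This completes the contrapositive.

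There is no serious obstacle, since the whole argument is an assembly of already-established results. The only point deserving care is verifying that the reduction from amenability to uniformly strong amenability via homogeneity is actually applicable in our setting (i.e.\ that the graph $\Gr$ arising from a uniform pants decomposition qualifies as a ``graph'' in the sense of the amenability definitions used in Section~\ref{amenab-pants-section}, possibly after the cosmetic step of subdividing loops and double edges as indicated in the remark following Definition~\ref{notions-amenability}). Once this is in place, the corollary follows by a one-line combination of the two theorems.
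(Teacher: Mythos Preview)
Your proposal is correct and follows exactly the approach the paper intends: the forward implication is Theorem~\ref{non-amen-equiv-delta<1} verbatim, and the converse is the contrapositive via Theorem~\ref{amen-implies-equal}, using homogeneity to pass from amenability to uniformly strong amenability and then the hypothesis $\h(\Lbd(G))=1$ to force $\delta(G)=1$. The paper does not spell out a proof beyond stating that the corollary follows from these two theorems, so your write-up is in fact more detailed than the original.
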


When $G$ is a non-trivial normal subgroup of a cocompact Fuchsian group,
Theorem~\ref{normalsubgroup} provides a similar consequence.
However, in Corollary~\ref{anyway-true}, we have only to assume that the graph 
$\Gr$ is homogeneous, which is weaker than the condition that $S$ itself admits 
the covering group action.

\begin{remark*}
Amenability of graphs is preserved by quasi-isometries 
(see e.g. \cite[Theorem~4.7]{woe00}).
If $\Gr$ is `quasi-homogeneous' in the sense that any vertex of $\Gr$ 
is mapped to a fixed vertex by a uniform quasi-isometry, 
then Corollary \ref{anyway-true} is still true under this weaker 
assumption on $\Gr$. 
\end{remark*}

Motivated by the statements of Theorem~\ref{brooks} and 
Corollary~\ref{anyway-true} we formulate the following 
conjecture.

\begin{conjecture}
\label{non-amenab-conj}
Let $S$ be a hyperbolic surface of infinite type having 
strongly bounded geometry, and assume $S = \B^2/G$, with $G$ a Fuchsian group. 
Let $\Gr$ be the graph associated to some uniform pants decomposition 
$\Pa$ of $S$. Then,
$$
\Gr: \mbox{ non-amenable } \; \Longleftrightarrow \;\; \delta(G) < \h(\Lbd(G)) .
$$
\end{conjecture}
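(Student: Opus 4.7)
The plan is to unify Theorems~\ref{amen-implies-equal} and \ref{non-amen-equiv-delta<1} by transferring the amenability/spectral-gap dichotomy of Kesten~\cite{kes59}, in the form used by Stadlbauer~\cite{sta11} to reprove Brooks' theorem, from the discrete graph $\Gr$ to the transfer operator controlling $\delta(G)$. The guiding picture is that $\Gr$ should arise as a factor of a symbolic system coming from the geodesic flow on $C(S)$, so the conjectured equivalence becomes a Kesten-type dichotomy for this factor: amenability on the graph side has to match the absence of a spectral gap on the thermodynamic side.

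First I would build a Markov coding of the geodesic flow on $C(S)$ adapted to $\Pa$, associating to a bi-infinite geodesic the sequence $(P_{i_k})_{k \in \Z}$ of pants it crosses, together with entry/exit labels on each boundary geodesic. Uniformity of $\Pa$ forces the roof function $\tau$ (first-return time to a suitable cross-section in each pair of pants) to be bounded above and below by positive constants, so one obtains a topological Markov shift $(\Sigma,T)$ whose admissibility is dictated by $\Gr$, together with a geometric potential $\phi_s=-s\tau$. Standard thermodynamic formalism then realises $\delta(G)$ as the unique $s\geq 0$ at which the Ruelle transfer operator $\mathcal{L}_{\phi_s}$ has spectral radius $1$. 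On the other hand, Proposition~\ref{cc-entr-is-delta-X} together with the quasi-isometry of Theorem~\ref{pantsgraph-qi-convexcore} shows that $\h(\Lbd(G))$ coincides, up to the normalisation by the average value of $\tau$, with the topological entropy $h_{\mathrm{top}}(T)$. In this framework the strict inequality $\delta(G)<\h(\Lbd(G))$ becomes precisely the existence of a spectral gap for $\mathcal{L}_{\phi_\delta}$ above its leading eigenvalue.

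Next I would exploit the natural factor map $\pi\colon\Sigma\to V(\Gr)^{\Z}$ sending an orbit to its sequence of visited pants, so that any equilibrium state for $\phi_\delta$ pushes forward to a stationary nearest-neighbour random walk on $\Gr$. After a standard symmetrisation, the $\ell^2$-spectral radius of this walk in Kesten's sense should agree with that of the compression of $\mathcal{L}_{\phi_\delta}$ to functions which factor through $\pi$. Kesten's theorem applied to $\Gr$, in the form for graphs of bounded geometry as in \cite{woe00}, then gives that this spectral radius is strictly less than $1$ if and only if $\Gr$ is non-amenable. Combined with the previous step, this yields both directions of the conjecture: non-amenability of $\Gr$ forces a strict spectral gap and hence $\delta(G)<\h(\Lbd(G))$, while amenability removes the gap and forces $\delta(G)=\h(\Lbd(G))$.

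The main obstacle is that Stadlbauer's framework is tailored to a finite base system with a normal group extension, whereas here $S$ is of infinite type and need not cover any finite surface. One therefore has to construct the coding from scratch and verify that (a) $h_{\mathrm{top}}(T)$ really equals $\h(\Lbd(G))$, and not merely a lower bound, and (b) the factor $\pi$ continues to satisfy the Kesten dichotomy despite the absence of a group-extension structure. A further serious issue is the presence of cusps inside the pants of $\Pa$: the ensuing parabolic returns destroy uniform hyperbolicity of $(\Sigma,T)$ and almost certainly require an inducing step, in the spirit of \cite{jae11}, before Kesten's theorem becomes applicable. Once these technical points are settled, the remaining ingredients -- divergence of the Poincar\'e series at $\delta$ on the amenable side, and exponential contraction of $\mathcal{L}_{\phi_\delta}$ on the non-amenable side -- should follow from the general thermodynamic theory.
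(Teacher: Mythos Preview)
The statement you are attempting to prove is Conjecture~\ref{non-amenab-conj}, which the paper does \emph{not} prove; it is presented as an open problem, supported only by the partial results of Theorems~\ref{amen-implies-equal} and~\ref{non-amen-equiv-delta<1} and the examples of Section~\ref{examples-dim-gap}. There is therefore no proof in the paper to compare your proposal against. Your outline is in fact precisely the research direction the authors themselves suggest in the introduction, where they write that further developments of Kesten's work~\cite{kes59}, as employed by Stadlbauer~\cite{sta11}, should be more instrumental than modifying Brooks' approach, and they cite~\cite{jae11} for progress on group-extended Markov systems.

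That said, your proposal is a strategy, not a proof, and the obstacles you list at the end are genuine gaps rather than routine technicalities. The most serious is your point~(b): Stadlbauer's dichotomy relies essentially on a \emph{group} extension, and replacing the covering group by the bare graph $\Gr$ --- which need not be a Cayley graph, as the Diestel--Leader example in Section~\ref{bd-type-cond} illustrates --- is exactly the heart of the open problem. A further issue you gloss over is the amenable direction. Theorem~\ref{amen-implies-equal} shows only that under uniform strong amenability $\delta(G)$ and $\h(\Lbd(G))$ are simultaneously equal to $1$ or simultaneously less than $1$; it does \emph{not} show they coincide. Your assertion that ``amenability removes the gap and forces $\delta(G)=\h(\Lbd(G))$'' is therefore not a unification of existing results but precisely the missing implication. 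Relatedly, your identification of $\h(\Lbd(G))$ with $h_{\mathrm{top}}(T)$ (up to normalisation by $\tau$) is unjustified: the convex core entropy measures volume growth in the universal cover, and without something like the bounded type condition of Section~\ref{patterson-for-X} there is no a priori reason the symbolic entropy of your section should recover it rather than merely bound it from below.
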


\begin{remark*}
Our further strategy, pursued in Sections~\ref{patterson-for-X}
and~\ref{bd-type-cond}, will be to provide a condition under which 
$\dim_H(\Lbd(G))=\h(\Lbd(G))$ holds, in which case $\delta(G) < \dim_H(\Lbd(G))$ 
would follow from the above conjecture.
\end{remark*}

\begin{remark*}
Formulating the above statements in higher dimensions depends on defining a
higher dimensional analogue to the pants decomposition of a hyperbolic surface.
\end{remark*}

\section{Examples of dimension gap}
\label{examples-dim-gap}

In this section, we will illustrate two examples of a surface $S=\B^2/G$ 
for which the condition $\delta(G) < \h(\Lbd(G))$ is satisfied.
In these examples, the graph associated to some uniform pants decomposition of 
$S$ is non-amenable. Thus, these examples may serve as an evidence for the 
conjecture raised in the previous section.

The building block for our Riemann surface will be a pair of pants $P_\ell$ 
all of whose boundary geodesics $a_1$, $a_2$ and $a_3$ have the same length 
$\ell>0$. Let $H_\ell$ be the symmetric half of $P_\ell$, which is the 
right-angled hexagon having the sides $a_1^+$, $a_2^+$ and $a_3^+$ of length 
$\ell/2$. Let $o$ be the centre of $H_\ell$ and $r(\ell)>0$ be the distance 
between $o$ and the midpoint of $a_i^+$ ($i=1,2,3$). 
Then, a simple calculation using hyperbolic trigonometry yields that
$$
r(\ell)={\rm arccosh}\sqrt{\frac{4\sinh^2(\ell/4)+1}{3\sinh^2(\ell/4)}}.
$$
In particular, $r(\ell)$ is a decreasing function of $\ell$ and
$$
\lim_{\ell \to \infty}r(\ell)={\rm arccosh}\frac{2}{\sqrt{3}}=
\frac{\log 3}{2} \approx 0.55.
$$

\begin{example}\label{panty}
Take the pair of pants $P_\ell$ and glue a funnel (half annulus) along $a_1^+$ 
and two other pants $P_\ell$ along $a_2^+$ and $a_3^+$ without twist, that is, 
in such a way that their symmetric axes meet and their half hexagons 
$H_\ell$ are connected.
The resulting surface has four boundary components and we glue four other pants 
$P_\ell$ along them in the same manner. Continuing this process, we obtain a 
hyperbolic surface $S$ of infinite type
which has one boundary component at infinity.
Let $G$ be a Fuchsian group such that $S=\B^2/G$.

We choose the midpoints of the sides of all right-angled hexagons $H_\ell$ in 
$S$ and denote the set of all such midpoints by $\hat X \subset C(S)$.
Let $X$ be the inverse image of $\hat X$ under the projection $\B^2 \to S$.
Then, clearly, $X$ is a uniformly distributed set in the convex hull $H(\Lbd(G))$ 
of the limit set of $G$, and Proposition \ref{cc-entr-is-delta-X}
states that the convex core entropy $\h(\Lbd(G))$ is given by $\Delta(X)$.

We will estimate this $\Delta(X)$.
Let $x_0 \in X$ be a point such that its projection $\hat x_0 \in \hat X$ is the 
midpoint of the side on $\partial C(S)$ facing the boundary at infinity of $S$. 
The number of points in $\hat X$ within distance $2n\,r(\ell)$ from $\hat x_0$ 
can be estimated as
$\card (\hat X \cap B_{2nr(\ell)}(\hat x_0)) \geq 2^n$ for every $n \geq 0$. 
Hence we have
\begin{eqnarray*}
\Delta(X)&=&
\limsup_{R \to \infty} \frac{\log \card (X \cap B_R(x_0))}{R}\\
&\geq& \limsup_{n \to \infty} \frac{\log \card (\hat X 
\cap B_{2n\,r(\ell)}(\hat x_0))}{2n\,r(\ell)}\\
&\geq& \lim_{n \to \infty} \frac{\log (2^n)}{2n\,r(\ell)}
=\frac{\log 2}{2r(\ell)}.
\end{eqnarray*}
Since $\log 2 \approx 0.69$, we see that $\h(\Lbd(G))>1/2$ for all sufficiently 
large $\ell$.

On the other hand, we will estimate $\delta(G)$ by using the isoperimetric 
constant $h^{\Pa}(S)$ for the pants decomposition $\Pa$ trivially obtained from 
the construction of $S$. Let $W^\sharp$ be an arbitrary connected finite 
(not single) union of pants from $\Pa$.
Then the number of components of $\partial W^\sharp$ is always 
greater than 
the number of pants in $W^\sharp$. Hence
$$
h^{\Pa}(S)=\sup_{W^\sharp}\frac{A(W^\sharp)}{\ell(\partial W^\sharp)} \leq
\frac{2\pi}{\ell}.
$$
By Proposition \ref{matsuzaki} and Lemma \ref{h*-hP}, we have
$$
(1 \leq)\ h(S) \leq h^*(S)+1 \leq b h^{\Pa}(S)+1=\frac{2b \pi}{\ell}+1,
$$
where $b=b(\Pa)>0$ is a constant depending on $\Pa$.
As $\ell$ tends to infinity, $h(S) \to 1$, and thus $\lambda_0(S) \to 1/4$ by
Theorem~\ref{estimates}. Therefore, $\delta(G) \to 1/2$ by 
Theorem~\ref{els-pat-sul}. Hence, for all sufficiently large $\ell$, 
we have the strict inequality $\delta(G)<\h(\Lbd(G))$.
\end{example}

In the above example, the convex core entropy $\h(\Lbd(G))$ is actually 
equal to one in spite of the fact that the above arguments cannot produce this 
value. This is because $\dim_H(\Lbd(G)) \leq \h(\Lbd(G))$ by
Theorem~\ref{basicinequality} and $\dim_H(\Lbd(G))=1$ by \cite[Theorem 2]{mat00}.
(Actually the 1-dimensional Hausdorff measure of $\Lbd(G)$ is positive 
in this case.) On the other hand, since the graph associated to the uniform 
pants decomposition $\Pa$ is non-amenable, 
Theorem~\ref{non-amen-equiv-delta<1} yields $\delta(G)<1$. Thus,
$$
\delta(G)<1=\dim_H(\Lbd(G)) = \h(\Lbd(G))
$$
for this example, which also shows that the assumption of amenability is 
necessary in the statement of Theorem~\ref{amen-implies-equal}.

In what follows, we give another example of a situation where 
$\delta(G)<\h(\Lbd(G))$,
with $\h(\Lbd(G))$ being strictly less than one,
for which we also get control on the Hausdorff dimension of the limit set, in 
the sense that $\dim_H(\Lbd(G)) <1$.

\begin{example}
We first take the same surface $S$ as in Example~\ref{panty} but then we cut off
some of the pants in the following way. Let $\Gr$ be the graph associated
with the pants decomposition of $S$. This is a binary tree which is rooted at 
the vertex $v_0$ corresponding to the pair of pants that contains $\hat x_0$, 
and which branches into two edges at every vertex (the valency is three at 
every vertex except $v_0$). 
There are four vertices at depth 2 from $v_0$.
We remove one of the four vertices and the edge just terminating at this vertex
as well as the subtree rooted at this vertex. 
By this surgery, there remain 3 vertices $v_2$ at depth 2 from $v_0$. 
We repeat the same process beginning from the three $v_2$, namely,
remove one of the four vertices $v_4$ at depth 2 from each $v_2$. 
Thus there remain 9 vertices $v_4$ at depth 4 from $v_0$. 
We continue this surgery 
inductively to obtain a new graph $\Gr'$.
Then we rebuild a hyperbolic surface $S'$ from $\Gr'$ by replacing each 
vertex of $\Gr'$ with the pair of pants $P_\ell$ and by attaching funnels 
to the boundary components resulting from this cut-off process.
Let $G'$ be a Fuchsian group such that $S'=\B^2/G'$.

Let $\hat X' \subset C(S')$ be the set of 
all midpoints of the sides as before and
let $X'$ be the inverse image of $\hat X$ under the projection $\B^2 \to S'$.
Taking the base points $x_0 \in X'$ and $\hat x_0 \in \hat X'$
for the associated Poincar{\'e} series, we estimate $\Delta(X')$.
Let $d(\ell)$ be the distance between the midpoints of any two distinct 
sides of $H_\ell$, which thus satisfies $d(\ell) \leq 2r(\ell)$.
By the construction of $S'$, we have
$\card (\hat X \cap B_{2n\,d(\ell)}(\hat x_0)) \geq 3^n$ for every $n \geq 0$. 
Therefore,
\begin{eqnarray*}
\Delta(X')&=&
\limsup_{R \to \infty} \frac{\log \card (X' \cap B_R(x_0))}{R}\\
&\geq& \limsup_{n \to \infty} \frac{\log \card (\hat X' 
\cap B_{2n\,d(\ell)}(\hat x_0))}{2nd(\ell)}\\
&\geq& \lim_{n \to \infty} \frac{\log (3^{n})}{2n\,d(\ell)}
=\frac{\log 3}{2d(\ell)} \geq \frac{\log 3}{4r(\ell)}.
\end{eqnarray*}

However, if we use $r(\ell)$ as before, the fact that 
$\lim_{\ell \to \infty}r(\ell)=\log 3/2$ means that $\Delta(X')$
barely fails the desired estimate $\Delta(X') > 1/2$.
To overcome this problem, we have to deal with $d(\ell)$ directly.
A more complicated but straightforward calculation employing 
hyperbolic trigonometry yields
$$
d(\ell)=2\ {\rm arccosh}\sqrt{\frac{5\sinh^2(\ell/4)+1}{4\sinh^2(\ell/4)}}.
$$
In particular, $d(\ell)$ is a decreasing function of $\ell$ and
$$
\lim_{\ell \to \infty}d(\ell)=2\ {\rm arccosh}\frac{\sqrt{5}}{2} \approx 0.96.
$$
Hence we have $\h(S')=\h(\Lbd(G'))>1/2$ for all sufficiently large $\ell$.

The estimate of $\delta(G')$ is given by the same arguments as in 
Example~\ref{panty} and we have $\delta(G') \to 1/2$ as $\ell \to \infty$.
Thus, for all sufficiently large $\ell$, the strict inequality 
$\delta(G')<\h(\Lbd(G'))$ is obtained.
Note that, by construction, 
any point in the convex core $C(S')$ is within a bounded distance from the 
boundary $\partial C(S')$. In this case, Theorems~\ref{bdneighbouring} or \ref{porosityestimate}
imply $\h(\Lbd(G'))<1$.
\end{example}

\section{Patterson measures for uniformly distributed sets}
\label{patterson-for-X}

Fix a uniformly distributed set $X$ in the convex hull 
$H(\Lbd) \subset \B^{n+1}$ of a closed set $\Lbd \subset \S^n$. 
For any exponent $s$ greater than $\Delta=\Delta(X)$ and for any 
$z \in \B^{n+1}$, we consider the purely atomic measure $\mu_z^s$ on $\B^{n+1}$ 
given by
$$
\mu_z^s=\frac{1}{P^s(X,o)}\sum_{x \in X}e^{-sd(x,z)} {\bf 1}_{x}.
$$
Here, $o$ is the origin of $\B^{n+1}$ and ${\bf 1}_{x}$ is the point mass at $x$.
The total mass $\sigma(\mu_z^s)$ of $\mu_z^s$ is $P^s(X,z)/P^s(X,o)$.
Since $\sigma(\mu_o^s)=1$, we have
$$
e^{-sd(z,o)} \leq \sigma(\mu_z^s) \leq e^{sd(z,o)}.
$$

For the moment, we impose the following assumption on the divergence of
the Poincar\'e series at the exponent $\Delta$:
$$
P^\Delta(X,z)=\infty.
$$
We say that (the Poincar\'e series for) $X$ is of {\em divergence type} if this condition is satisfied.
Later, we will see that this is automatically satisfied for the 
Poincar\'e series for a uniformly distributed set $X$ of {\em bounded type} 
defined below (see Corollary~\ref{bounded-divergent}).

Similar to the usual Patterson construction of conformal measures on $\S^n$,
we take a decreasing sequence of exponents 
$\{s_j\}$ with $\lim_{j \to \infty} s_j=\Delta$.
Then, passing to a subsequence if necessary, we have a weak-$\ast$ limit of the 
sequence of purely atomic measures $\{\mu_z^{s_j}\}$ on the compact topological 
space $\overline{\B^{n+1}}=\B^{n+1} \cup \S^n$, which we denote by $\mu_z$. 
Note that this may be depending on the choice of the sequence of exponents 
$\{s_j\}$, but we expect that $\mu_z$ is uniquely determined 
independently of $\{s_j\}$.
It is well-known that when $X$ is the orbit of a convex cocompact Kleinian 
group, this uniqueness holds.
Although we cannot assume uniqueness of $\mu_z$ for uniformly distributed sets 
in general, it is easy to see that if $\{\mu_o^{s_j}\}$ has a weak-$\ast$ 
limit $\mu_o$ for $z=o$, then $\{\mu_z^{s_j}\}$ has a weak-$\ast$ limit 
$\mu_z$ for every $z \in \B^{n+1}$ by the same sequence of exponents $\{s_j\}$. 
Hence, whenever we consider a family of measures $\{\mu_z\}_{z \in \B^{n+1}}$, 
we always regard them as the weak-$\ast$ limits for the same sequence $\{s_j\}$.
We call $\{\mu_z\}_{z \in \B^{n+1}}$ a {\it Patterson measure} for 
the uniformly distributed set $X$ in the convex hull $H(\Lbd)$.

\begin{proposition}
Let $\{\mu_z\}_{z \in \B^{n+1}}$ be a Patterson measure for 
a uniformly distributed set $X$ in the convex hull $H(\Lbd)$ 
of a closed set $\Lbd \subset \S^n$.
Then the following properties are satisfied:
\begin{enumerate}
\item
The support of each $\mu_z$ is in $\Lbd$;
\item
The measures $\{\mu_z\}$ are absolutely continuous
to each other and the Radon-Nikodym derivative satisfies
$$
\frac{d\mu_z}{d\mu_o}(\xi)=k(z,\xi)^\Delta
$$
for any $z \in \B^{n+1}$ and for any $\xi \in \Lbd$. Here
$k(z,\xi)=(1-|z|^2)/|\xi-z|^2$ is the Poisson kernel and $\Delta=\Delta(X)$.
\end{enumerate}
\end{proposition}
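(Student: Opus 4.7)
My plan is to approach (1) by exploiting the divergence type assumption. Since $P^\Delta(X,o)=\infty$, monotone convergence forces $P^{s_j}(X,o)\to\infty$ as $s_j\downarrow\Delta$. Consequently, for each fixed $x\in X$, the atomic weight $e^{-s_j d(x,z)}/P^{s_j}(X,o)$ vanishes in the limit, so mass escapes from every compact subset of $\B^{n+1}$. Thus any weak-$\ast$ accumulation point of $\{\mu_z^{s_j}\}$ is supported on the set of accumulation points of $X$ inside $\overline{\B^{n+1}}$, which, by discreteness of $X$, lie entirely on $\S^n$. Since $X\subset H(\Lbd)$, those accumulation points must belong to $\overline{H(\Lbd)}\cap\S^n=\Lbd$, which proves (1).

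For (2), I would exploit the standard asymptotic relation between hyperbolic distance and the Busemann function in the unit ball model: as $x\to\xi\in\S^n$ along any route,
$$d(x,z)-d(x,o)\longrightarrow-\log k(z,\xi),$$
with convergence that is uniform on compact subsets of $\S^n$ for any fixed $z$. Introducing the continuous function
$$\Phi_s(x)=e^{-s(d(x,z)-d(x,o))}\quad(x\in\B^{n+1}),$$
this limit identifies a continuous extension of $\Phi_s$ to the compactification $\overline{\B^{n+1}}$ whose boundary values are precisely $k(z,\xi)^s$. From the definition of $\mu_z^s$ one has the exact identity
$$\int f\,d\mu_z^s=\int f(x)\,\Phi_s(x)\,d\mu_o^s(x)$$
for every $f\in C(\overline{\B^{n+1}})$. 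Passing $s_j\downarrow\Delta$ and using that $\mu_o^{s_j}\to\mu_o$ weak-$\ast$, together with the continuity of $\Phi_{s_j}\to\Phi_\Delta$ up to the boundary, yields
$$\int f\,d\mu_z=\int_\Lbd f(\xi)\,k(z,\xi)^\Delta\,d\mu_o(\xi),$$
and varying $f$ gives the Radon--Nikodym formula and, as a corollary, mutual absolute continuity of the $\mu_z$.

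The main obstacle I anticipate is promoting the pointwise boundary limit $\Phi_s(x)\to k(z,\xi)^s$ into the \emph{joint} continuity on $\overline{\B^{n+1}}\times[\Delta,\Delta+1]$ needed to interchange the weak-$\ast$ limit along $s_j\to\Delta$ with the integration against $\Phi_{s_j}$. This reduces to uniform (in $s$ and in $\xi$ on compact subsets of $\S^n$) control of the difference $d(x,z)-d(x,o)+\log k(z,\xi)$ as $x$ approaches $\xi$, which is a standard but careful computation in hyperbolic geometry of the unit ball; once that uniformity is in hand, the remainder of the argument is routine bookkeeping for weak-$\ast$ convergence of finite Borel measures on the compact space $\overline{\B^{n+1}}$.
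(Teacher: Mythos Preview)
Your approach is correct and is precisely the standard Patterson argument the paper defers to: the paper gives no proof here, stating only that it ``can be obtained in a similar manner to the case of Patterson measures for Kleinian groups.''

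Two minor remarks. In (1), vanishing of individual atomic weights does not by itself force mass off compact sets; use instead that $X\cap K$ is finite for each compact $K\subset\B^{n+1}$, so the numerator $\sum_{x\in X\cap K}e^{-s_j d(x,z)}\le\#(X\cap K)$ stays bounded while the denominator diverges. In (2), the obstacle you anticipate dissolves once you record the uniform bound $|d(x,z)-d(x,o)|\le d(z,o)$ from the triangle inequality: this already makes $\Phi_{s_j}\to\Phi_\Delta$ uniformly on all of $\overline{\B^{n+1}}$, and since $\sigma(\mu_o^{s_j})=1$, the splitting
\[
\int f\Phi_{s_j}\,d\mu_o^{s_j}-\int f\Phi_\Delta\,d\mu_o
=\int f(\Phi_{s_j}-\Phi_\Delta)\,d\mu_o^{s_j}
+\Bigl(\int f\Phi_\Delta\,d\mu_o^{s_j}-\int f\Phi_\Delta\,d\mu_o\Bigr)
\]
handles the limit interchange with no further hyperbolic estimates beyond the continuous extension of $d(\cdot,z)-d(\cdot,o)$ to the boundary.
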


This proposition says that the Patterson measure $\{\mu_z\}_{z \in \B^{n+1}}$ 
is a $\Delta$-conformal measure that is supported 
on $\Lbd$. A proof can be obtained in a similar manner to the case of Patterson 
measures for Kleinian groups.

The total mass of $\mu_z$ is given by
$$
\sigma(\mu_z)=\lim_{j \to \infty} \sigma(\mu^{s_j}_z)=
\lim_{j \to \infty} \frac{P^{s_j}(X,z)}{P^{s_j}(X,o)}.
$$
Then, for every $z \in \B^{n+1}$,
$$
e^{-\Delta d(z,o)} \leq \sigma(\mu_z) \leq e^{\Delta d(z,o)}.
$$

\begin{definition}
We say that a uniformly distributed set $X$ is of {\em bounded type}
if there exists a constant $\rho \geq 1$ such that
$$
\frac{\card (X \cap B_R(x))}{\card (X \cap B_R(o))} \leq \rho
$$
for every $x \in X$ and for every $R>0$. 
Here we assume that the origin $o \in \B^{n+1}$ belongs to $X$.
\end{definition}

When $X$ is the orbit of a convex cocompact Kleinian group, 
$X$ is clearly of bounded type. This is not the case for a uniformly distributed 
set $X$ in the convex hull of a geometrically
finite group with parabolic elements. Hence, 
it is useful to adapt the bounded type condition in such a way
that it also covers geometrically finite Kleinian groups.
This will be done later in this section.

It is easy to see that, if $X$ is of bounded type, then  
$P^{s}(X,x)/P^{s}(X,o)\leq \rho$
for every $x \in X$ and every 
$s>\Delta(X)$, and hence 
$$
\limsup_{s \searrow \Delta}\frac{P^{s}(X,x)}{P^{s}(X,o)} \leq \rho.
$$
This is also true for every $x$ in $H(\Lbd)$ by replacing $\rho$ with 
$K=\rho e^{\Delta M}$,
where $M>0$ is the constant in the definition of uniform distribution.
Then, by an elementary observation in hyperbolic geometry,
we see that the total mass $\sigma(\mu_z)$ of the Patterson measure $\mu_z$ is
uniformly bounded by $K$ for every $z \in \mathbb B^{n+1}$.

We will now deal with the situation that the Poincar\'e series of $X$ 
converges at the exponent $\Delta$.
We shall introduce a Patterson function and a modified Poincar\'e series
to construct a Patterson measure $\mu_z$, as in the case of the ordinary 
Poincar\'e series for Kleinian groups (see for instance \cite{ft06}).

\begin{definition}
For a uniformly distributed set $X$,
a continuous, non-decreasing function 
$h:(0,\infty) \to (0,\infty)$ is called a {\em Patterson function}
(or sometimes \emph{slowly varying function})
if the following conditions are satisfied:
\begin{enumerate}
\item
The {\em modified Poincar\'e series}
$$
P_h^s(X,z)=\sum_{x \in X} 
h(d(x,z))e^{-s d(x,z)}
$$
converges for $s>\Delta(X)$ and diverges 
for $s \leq \Delta(X)$;
\item
For every $\varepsilon >0$, there exists a constant $r_0>0$ such that
$h(t+r) \leq e^{\varepsilon t} h(r)$ for all $r \geq r_0$ and $t>0$;
\item
There exists a constant $C>0$ such that
$h(r+t) \leq C \, h(r) \, h(t)$ for all $r>0$ and $t>0$.
\end{enumerate}
\end{definition}

The same proof as in the case of orbits of Kleinian groups shows that a 
Patterson function exists for any uniformly distributed set $X$. Notice that,
if $P^\Delta(X,z)=\infty$, then the constant function
$h(t) \equiv 1$ can be a Patterson function for $X$.
Hence, by the same construction using the modified Poincar\'e series
instead of the ordinary one, we have a $\Delta$-conformal measure 
$\{\mu_z\}_{z \in \mathbb B^{n+1}}$ 
supported on $\Lambda$, which we again call a Patterson measure for $X$.

\begin{proposition}\label{modified-bounded}
Assume that the uniformly distributed set $X$ is of bounded type.
Then there exists a Patterson function $h$
for $X$ and a constant $\rho \geq 1$ 
such that the modified Poincar\'e series satisfies
$$
\frac{P_h^{s}(X,x)}{P_h^{s}(X,o)} \leq \rho
$$
for every $x \in X$ and for every $s>\Delta(X)$.
\end{proposition}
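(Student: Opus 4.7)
My plan is to first invoke the standard Patterson construction to produce a function $h$ satisfying (1)--(3), and then to deduce the ratio bound by a layer-cake / Abel-summation argument against the counting function $N_R(z) := \#(X \cap B_R(z))$.

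Existence of a continuous, non-decreasing Patterson function $h$ satisfying (1)--(3) is classical. In the divergence-type case I would simply take $h \equiv 1$; otherwise I would build $h$ as a slowly varying function, exactly as in \cite{pat76} or \cite{nic89}, so that the modified series $P_h^s$ diverges precisely at $s = \Delta$ while remaining convergent for $s > \Delta$. The construction and verification of (1)--(3) do not use any group structure, so they transfer verbatim from the Kleinian setting to uniformly distributed sets.

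For the ratio estimate, set $\phi(R) := h(R) e^{-sR}$. The central observation is that by condition (2) applied with any fixed $\varepsilon \in (0, \Delta)$, one obtains $r_0 = r_0(\varepsilon)$ --- crucially independent of $s$, since we only need $\varepsilon < s$ and $s > \Delta$ --- such that $\phi(R+t) \leq e^{-(s-\varepsilon)t}\phi(R)$ for $R \geq r_0$, so that $\phi$ is non-increasing on $[r_0, \infty)$ and decays to $0$. Using $\phi(\infty) = 0$ and Fubini, I would rewrite
$$P_h^s(X, z) = \int_{[0,\infty)} N_R(z)\, (-d\phi(R)).$$
Splitting at $r_0$, the tail integral has $-d\phi \geq 0$, so the bounded type hypothesis $N_R(x) \leq \rho\, N_R(o)$ gives
$$\int_{[r_0, \infty)} N_R(x)\,(-d\phi) \leq \rho \int_{[r_0, \infty)} N_R(o)\,(-d\phi).$$
The head integral over $[0, r_0)$, where $\phi$ may fail to be monotone, I would bound crudely by the total variation of $\phi$ on $[0, r_0]$ times $N_{r_0}(z)$. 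By condition (ii) of uniform distribution together with the bounded type condition, $N_{r_0}(x)$ is bounded by an absolute constant depending only on $r_0$, $m$ and $\rho$. Hence the head contributions at both $x$ and $o$ are uniformly bounded in $s$ and $x$, and in particular the tail integral at $o$ differs from $P_h^s(X, o)$ by a uniform constant. Since $o \in X$ forces $P_h^s(X, o) \geq \phi(0) = h(0) > 0$, this additive error is absorbed into a constant multiple of $P_h^s(X, o)$, yielding $P_h^s(X, x) \leq \rho' P_h^s(X, o)$ with $\rho'$ depending only on $\rho$, $h$, $r_0$ and the uniform distribution parameters.

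The main obstacle is the non-monotonicity of $\phi$ on $[0, r_0)$, which prevents a clean one-shot comparison via bounded type. I would sidestep this by choosing $\varepsilon < \Delta$ once and for all so that $r_0$ is independent of $s$, and then using the uniform bound on $N_{r_0}$ to absorb the finite head piece. A minor point to verify is the Fubini / integration-by-parts rewriting, which is straightforward because $\phi \to 0$ at infinity and $P_h^s$ converges absolutely for $s > \Delta$.
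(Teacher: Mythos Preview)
Your proposal is correct and follows essentially the same route as the paper: both compare $P_h^s(X,x)$ to $P_h^s(X,o)$ by an Abel-summation / layer-cake argument against the counting function $N_R(\cdot)$, using the bounded-type inequality $N_R(x)\le\rho\,N_R(o)$. The paper's proof simply asserts that this counting bound yields
\[
\sum_{d(x_i,x)\le R} h(d(x_i,x))e^{-s d(x_i,x)} \le \rho \sum_{d(x_j,o)\le R} h(d(x_j,o))e^{-s d(x_j,o)}
\]
for every $R$, and then lets $R\to\infty$; this step tacitly uses that $\phi(R)=h(R)e^{-sR}$ is non-increasing, which the paper does not comment on. Your treatment is more careful: you fix $\varepsilon<\Delta$ once, obtain an $s$-independent $r_0$ from property~(2), handle the tail $[r_0,\infty)$ via the clean layer-cake comparison, and absorb the head $[0,r_0)$ using the uniform bound on $N_{r_0}$ coming from the separation constant $m$. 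The price is a slightly larger constant $\rho'$ in place of $\rho$, which is permitted by the statement. So the two arguments are the same in spirit; yours simply makes explicit the monotonicity issue that the paper glosses over (alternatively, one could note that $h$ can be chosen so that $h(t)e^{-\Delta t}$ is non-increasing, whence $\phi$ is globally non-increasing for all $s>\Delta$, and then the paper's one-line argument goes through with the original $\rho$).
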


\begin{proof}
Since $X=\{x_i\}_{i \in \N}$ is of bounded type, there is by definition a constant 
$\rho \geq 1$ such that
$\card (X \cap B_R(x)) \leq \rho \, \card (X \cap B_R(o))$
for every $x \in X$ and for every $R>0$. This implies that
$$
\sum_{d(x_i,x) \leq R} h(d(x_i,x))e^{-s d(x_i,x)} \leq 
\rho \sum_{d(x_j,o) \leq R} h(d(x_j,o))e^{-s d(x_j,o)}
$$
for every $R>0$.
Taking the limit as $R \to \infty$, we have the desired estimate.
\end{proof}

By the same argument as before, we see from Proposition \ref{modified-bounded} 
that, if $X$ is of bounded type, then the total mass $\sigma(\mu_z)$ 
of the Patterson measure $\mu_z$ for $X$ is uniformly bounded by $K$
independently of $z \in \mathbb B^{n+1}$.

\begin{proposition}
The Patterson measure $\mu_z$ for $X$ has no atom in $\Lbd$ if $X$ is of 
bounded type.
\end{proposition}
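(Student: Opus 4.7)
The plan is to argue by contradiction, combining the $\Delta$-conformality of the Patterson measure with the uniform upper bound on total mass furnished by the bounded type hypothesis. Specifically, I would assume that $\mu_o$ has an atom of positive mass $c>0$ at some point $\xi_0 \in \Lbd$ and then derive a contradiction by letting the base point approach $\xi_0$ radially.

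First, by the conformality stated in the preceding proposition, the Radon--Nikodym derivative $d\mu_z/d\mu_o$ is pointwise given by the continuous function $\xi \mapsto k(z,\xi)^\Delta$. Evaluating at the atom $\xi_0$ therefore yields
$$
\mu_z(\{\xi_0\}) = k(z,\xi_0)^\Delta \, \mu_o(\{\xi_0\}) = c\, k(z,\xi_0)^\Delta
$$
for every $z \in \B^{n+1}$. Next, choose a radial sequence $z_n = t_n \xi_0 \in \B^{n+1}$ with $t_n \nearrow 1$; then a direct computation gives $k(z_n,\xi_0) = (1+t_n)/(1-t_n) \to \infty$, so $\mu_{z_n}(\{\xi_0\}) \to \infty$.

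Finally, I invoke Proposition~\ref{modified-bounded} together with the remark immediately following it, which upgrades the ratio bound from $X$ to a uniform bound $\sigma(\mu_z) \leq K$ valid for all $z \in \B^{n+1}$. Since $\mu_{z_n}(\{\xi_0\}) \leq \sigma(\mu_{z_n}) \leq K$, this contradicts the divergence from the previous step. Hence $\mu_o$ has no atom in $\Lbd$, and the conclusion transfers to every $\mu_z$ in the family by mutual absolute continuity.

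I do not foresee a serious obstacle: both essential ingredients (pointwise conformality and the uniform total mass bound) are already in place. The only mild subtlety is the passage to a pointwise evaluation of the Radon--Nikodym formula at $\xi_0$, which is justified by the continuity of the Poisson kernel together with absolute continuity. If one wished to avoid this step, the same contradiction could be obtained by replacing $\{\xi_0\}$ with neighbourhoods $U_m \searrow \{\xi_0\}$ and applying a standard Fatou-type argument.
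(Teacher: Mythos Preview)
Your proposal is correct and follows essentially the same argument as the paper: assume an atom at $\xi_0$, use conformality to write $\mu_z(\{\xi_0\})=k(z,\xi_0)^\Delta\mu_o(\{\xi_0\})$, let $z$ tend to $\xi_0$ radially so that the Poisson kernel blows up, and contradict the uniform bound $\sigma(\mu_z)\leq K$ coming from the bounded type assumption. The paper's version is terser but identical in substance.
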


\begin{proof}
If $\mu_z$ has an atom at $\xi \in \Lbd$, then 
$$
\sigma(\mu_{z}) \geq \mu_{z}(\{\xi\})=k(z,\xi)^\Delta\mu_o(\{\xi\}),
$$
and this diverges to $\infty$ when $z \in \mathbb B^{n+1}$ tends to $\xi$
radially. However, this contradicts the fact that the total mass 
$\sigma(\mu_{z})$ is uniformly bounded.
\end{proof}

One of the very useful properties of Patterson measures for groups is
given by the so-called \emph{Sullivan shadow lemma}. 
In our case we obtain a partial result, 
which will be sufficient for further considerations.
 
\begin{definition}
For a point $x \in \mathbb B^{n+1}$ different from the origin $o$,
the antipodal point $x^\ast \in \mathbb S^n$ is the end point of the 
geodesic ray from $x$ passing through $o$. Let $B(x,\ell)$ be
a hyperbolic ball with the centre $x$ and radius $\ell>0$.
The \emph{shadow} $S(x,\ell)$ of the ball $B(x,\ell)$ is the set of points 
$y \in \mathbb S^n$ so that the geodesic from $x^\ast$ to $y$
intersects $B(x,\ell)$.
\end{definition}

The diameter of a shadow $S(x,\ell)$ is measured using the spherical distance on 
$\mathbb S^n$ and is denoted by $\diam S(x,\ell)$. In the case of the orbits
of Kleinian groups, the shadow lemma asserts that the Patterson measure 
of a shadow and the $\Delta$-power of its diameter are comparable. 
In our case, we have a similar statement as follows.

\begin{theorem}\label{shadow}
Let $X$ be a uniformly distributed set of bounded type and
$\mu_z$ the Patterson measure for $X$ with uniformly bounded total mass 
$\sigma(\mu_z) \leq K$.
Fix a radius $\ell>0$.
Then there exists a constant $A>0$ depending only on $K$ and $\ell$ such that
the diameter of a shadow $S(x,\ell)$ for any $x \in X\setminus \{o\}$ satisfies
$$
A\,(\diam S(x,\ell))^\Delta \geq \mu_o(S(x,\ell)).
$$
\end{theorem}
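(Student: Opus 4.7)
The plan is to exploit $\Delta$-conformality of the Patterson measure family $\{\mu_z\}$ together with the uniform total mass bound $\sigma(\mu_z)\le K$ guaranteed by the bounded type hypothesis (via Proposition~\ref{modified-bounded}). Conformality gives $d\mu_o(\xi) = k(x,\xi)^{-\Delta}\,d\mu_x(\xi)$, so
\[
\mu_o(S(x,\ell)) \;=\; \int_{S(x,\ell)} k(x,\xi)^{-\Delta}\,d\mu_x(\xi).
\]
It therefore suffices to produce a uniform lower bound on $k(x,\xi)$ for $\xi\in S(x,\ell)$, together with a lower bound on $\diam S(x,\ell)$, both in terms of $d(o,x)$ and with multiplicative constants depending only on $\ell$.

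For the Poisson kernel estimate, the point is that if $\xi\in S(x,\ell)$ then by definition the geodesic from $x^\ast$ to $\xi$ meets the hyperbolic ball $B(x,\ell)$. Since $x^\ast$, $o$, $x$ lie on a common geodesic, the Busemann cocycle $\beta_\xi$ (with $k(x,\xi)=e^{-\beta_\xi(o,x)}$) satisfies $\beta_\xi(o,x)\le -d(o,x)+2\ell$, whence
\[
k(x,\xi) \;\ge\; e^{d(o,x)-2\ell}\qquad(\xi\in S(x,\ell)).
\]
For the diameter estimate, standard hyperbolic trigonometry in the ball model supplies a constant $c(\ell)>0$, depending only on $\ell$, with
\[
\diam S(x,\ell) \;\ge\; c(\ell)\,e^{-d(o,x)}
\]
for all $x\in X\setminus\{o\}$; the regime of $d(o,x)$ bounded is handled by observing that $\diam S(x,\ell)$ then has a positive lower bound depending only on $\ell$, so after enlarging $A$ if necessary we may as well assume $d(o,x)$ is large.

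Combining these estimates with the bound $\mu_x(S(x,\ell))\le\sigma(\mu_x)\le K$ yields
\[
\mu_o(S(x,\ell)) \;\le\; e^{-\Delta(d(o,x)-2\ell)}\,\sigma(\mu_x) \;\le\; K\,e^{2\ell\Delta}\,e^{-\Delta d(o,x)} \;\le\; K\,e^{2\ell\Delta}\,c(\ell)^{-\Delta}\,(\diam S(x,\ell))^{\Delta},
\]
so $A := K\,e^{2\ell\Delta}\,c(\ell)^{-\Delta}$ depends only on $K$ and $\ell$, as required.

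There is no serious technical obstacle here; the argument is essentially the conformality identity plus elementary hyperbolic geometry. The crucial point, rather than a hard step, is the availability of the uniform bound $\sigma(\mu_z)\le K$: for a general uniformly distributed set one only has $\sigma(\mu_z)\le e^{\Delta d(o,z)}$, which would exactly cancel the factor $e^{-\Delta d(o,x)}$ produced by conformality and leave nothing useful. This is precisely where the bounded type hypothesis enters, and this is also why only one direction of the classical shadow lemma is asserted here — the converse inequality $\mu_o(S(x,\ell))\ge A'(\diam S(x,\ell))^{\Delta}$ would require a substitute for the group invariance used in the Kleinian case, which is not available for an arbitrary uniformly distributed set.
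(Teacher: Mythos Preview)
Your proof is correct and follows essentially the same route as the paper's: both use the conformality relation $d\mu_x = k(x,\cdot)^\Delta\,d\mu_o$, a lower bound for the Poisson kernel on the shadow, and the uniform mass bound $\sigma(\mu_x)\le K$. The only cosmetic difference is that the paper obtains the Poisson kernel bound directly in the form $k(x,\xi)\ge b/(a\,\diam S(x,\ell))$ by conjugating with a M\"obius map $\gamma_x$ sending $x$ to $o$, whereas you pass through the intermediate quantity $e^{d(o,x)}$ via the Busemann estimate and then invoke $\diam S(x,\ell)\ge c(\ell)e^{-d(o,x)}$ separately; the two computations are equivalent.
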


\begin{proof}
Let $\gamma_x$ be a conformal automorphism of $\mathbb B^{n+1}$ 
that sends $x$ to $o$. Its linear stretching factor 
$|\gamma'_x(\xi)|$ at $\xi \in \mathbb S^n$  
coincides with the Poisson kernel $k(x,\xi)$.
We will estimate $|\gamma'_x(\xi)|$ on $S(x,\ell)$.
For any $\xi \in S(x,\ell)$, set $\eta=\gamma_x(\xi)$. Then 
$$
|\gamma'_x(\xi)|=\frac{1}{|(\gamma^{-1}_x)'(\eta)|}=
\frac{|\eta-\gamma_x(o)|^2}{1-|\gamma_x(o)|^2}
=\frac{|\eta-\gamma_x(o)|^2}{1-|x|^2}.
$$
Here we have constants $a>0$ and $b>0$ depending only on $\ell$ such that
$$
1-|x|^2 \leq a\ \diam S(x,\ell) \;\;\mbox{ and} 
\quad|\eta-\gamma_x(o)|^2 \geq b
$$
for any $\eta$ in $\gamma_x(S(x,\ell))$. Therefore,
$$
k(x,\xi)=|\gamma'_x(\xi)| \geq \frac{b}{a\ \diam S(x,\ell)},
$$
and hence
\begin{eqnarray*}
K \geq \sigma(\mu_x) &\geq& \int_{S(x,\ell)}d\mu_x(\xi)\\
&=& \int_{S(x,\ell)}k(x,\xi)^\Delta d\mu_o(\xi)\\
& \geq & \left(\frac{b}{a}\right)^\Delta 
\frac{\mu_o(S(x,\ell))}{(\diam S(x,\ell))^\Delta}.
\end{eqnarray*}
By taking $A=K(a/b)^\Delta$, we obtain the assertion.
\end{proof}

Employing the shadow lemma (Theorem \ref{shadow}) and condition (i) 
of the definition of uniformly distributed sets, we can obtain the 
lower estimate of the Hausdorff dimension of 
$\Lambda$ by the critical exponent $\Delta=\Delta(X)$.
This is a generalisation of the corresponding result for the Hausdorff dimension 
of limit sets of convex cocompact Kleinian groups.

\begin{theorem}\label{hausdorff}
Let $\Lbd$ be a closed subset on $\mathbb S^n$ and assume that
there is a uniformly distributed set $X$ of bounded type
in the convex hull $H(\Lbd)$.
Then the $\Delta$-dimensional Hausdorff measure of $\Lbd$ is positive.
In particular, $\dim_H (\Lbd) \geq  \Delta(X)$, and hence we have that 
$\dim_H (\Lbd) =\Delta(X) = \h(\Lbd)=\dim_B(\Lbd)$.
\end{theorem}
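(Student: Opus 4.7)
The plan is to prove $\dim_H(\Lbd) \geq \Delta$ via the standard mass distribution principle, using the Patterson measure $\mu_o$ for $X$ (whose total mass is uniformly bounded thanks to $X$ being of bounded type) together with the shadow lemma from Theorem~\ref{shadow}. The remaining equalities will then follow immediately from the chain $\Delta(X) = \h(\Lbd) = \overline{\dim}_B(\Lbd) \geq \dim_H(\Lbd)$ established in Proposition~\ref{cc-entr-is-delta-X} and Theorem~\ref{basicinequality}, and the existence of the box-counting dimension will follow from the sandwich $\dim_H \leq \underline{\dim}_B \leq \overline{\dim}_B$.

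The key geometric step is a local comparison: for every $\xi \in \Lbd$ and every sufficiently small spherical radius $r > 0$, I want to produce a point $x \in X$ whose shadow $S(x,\ell)$ (for some fixed $\ell$ depending only on the constant $M$ from the definition of uniformly distributed set) contains the spherical ball $B(\xi,r)$ and has $\diam S(x,\ell)$ comparable to $r$. To arrange this, I pick a point $y$ in $H(\Lbd)$ lying on (or close to) the geodesic ray from $o$ toward $\xi$ at hyperbolic depth roughly $\log(1/r)$ from $o$; such a $y$ exists because $\xi$ is accumulated by geodesics with both endpoints in $\Lbd$ (there is more than one limit point; otherwise the statement is trivial). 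By the uniform distribution property (i), there is some $x \in X$ with $d(x,y) \leq M$. By elementary hyperbolic geometry in the ball model, once $\ell$ is chosen larger than $M$ plus a fixed constant, the shadow $S(x,\ell)$ contains $B(\xi,r)$, while $\diam S(x,\ell)$ remains bounded above by a constant multiple of $r$.

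Combining this containment with Theorem~\ref{shadow} yields
\begin{equation*}
\mu_o(B(\xi,r)) \;\leq\; \mu_o(S(x,\ell)) \;\leq\; A\,(\diam S(x,\ell))^{\Delta} \;\leq\; C\,r^{\Delta},
\end{equation*}
with $C$ independent of $\xi$ and $r$. Since $\mu_o$ is a positive finite measure supported on $\Lbd$ (positivity being guaranteed by the Patterson construction, which produces a probability measure when normalised), the standard mass distribution principle (see for instance \cite{fal90}) then gives $\mathcal{H}^{\Delta}(\Lbd) > 0$, so that $\dim_H(\Lbd) \geq \Delta$. Together with Theorem~\ref{basicinequality} and Proposition~\ref{cc-entr-is-delta-X}, this forces all four quantities to coincide and, since $\dim_H = \overline{\dim}_B$, the box-counting dimension $\dim_B(\Lbd)$ exists and equals this common value.

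The main obstacle is the geometric containment: one must verify carefully that a point $y$ at depth $\log(1/r)$ on a ray terminating at $\xi \in \Lbd$ can indeed be chosen inside $H(\Lbd)$ (which uses that $\Lbd$ has at least two points, so that there exist geodesics in $H(\Lbd)$ clustering toward $\xi$), and that the replacement of $y$ by a nearby $x \in X$ with $d(x,y) \leq M$ distorts the associated shadow only by constants depending on $M$ and $\ell$. Once these quantitative estimates on Poisson kernels and shadows are set up, the rest of the argument is a direct transcription of the convex-cocompact Patterson-Sullivan proof with the group orbit replaced by $X$ and the bounded-type hypothesis replacing the $G$-invariance.
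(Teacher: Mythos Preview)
Your approach is essentially the same as the paper's. The paper simply invokes ``usual arguments (see Nicholls \cite[Section 4.6]{nic89}) relying on Theorem~\ref{shadow}'' to deduce $m(\Lbd) \geq c\,\mu_o(\Lbd) > 0$, and then closes with Theorem~\ref{basicinequality} and Proposition~\ref{cc-entr-is-delta-X} exactly as you do; what you have written is a correct unpacking of those usual arguments, with the shadow--ball comparison and the mass distribution principle made explicit.
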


\begin{proof}
By usual arguments (see Nicholls \cite[Section 4.6]{nic89})
relying on Theorem \ref{shadow}, the $\Delta$-dimensional Hausdorff measure $m$
satisfies $m(\Lbd) \geq c \, \mu_o(\Lbd)$ for some constant $c>0$. 
Since $\mu_o(\Lbd)>0$, we have that $m(\Lbd)>0$. 
This implies that $\dim_H (\Lbd) \geq  \Delta(X)$. 
By Theorem~\ref{basicinequality} and Proposition~\ref{cc-entr-is-delta-X}, 
we conclude $\dim_H (\Lbd)=\Delta(X)=\h(\Lbd)=\overline{\dim}_B(\Lbd)$.
Thus, $\dim_B(\Lbd)$ exists and equals all of these invariants.
\end{proof}

\begin{remark*}
We have relied on the shadow lemma (Theorem~\ref{shadow}) in order to show 
Theorem~\ref{hausdorff}.
An alternative proof may be possible by using the arguments in Bishop and 
Jones~\cite{bj97} which give $\dim_H(\Lbd_c(G))=\delta(G)$ for any 
non-elementary Kleinian group $G$. However, these arguments require some
degree of homogeneity of $X$ and it is not easy to deduce this only from 
the bounded type condition for $X$. 
In other words, it is only the number of points in $X$ within some distance 
from some point $x \in X$ which is controlled by the bounded type condition, 
whereas all `geometric characteristics' (i.e. both distance and direction) 
of $X$ viewed from $x \in X$ must be almost the same in order to apply 
Bishop and Jones' arguments.
\end{remark*}

From Theorem~\ref{hausdorff}, we see that the bounded type property implies 
the divergence type property of the Poincar\'e series for a uniformly 
distributed set $X$.

\begin{corollary}
\label{bounded-divergent}
Any uniformly distributed set $X$ of bounded type
in $H(\Lbd)$ satisfies $P^\Delta(X,o)=\infty$ for $\Delta=\Delta(X)$.
\end{corollary}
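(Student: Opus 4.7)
The plan is to argue by contradiction: assume $P^\Delta(X,o) < \infty$, construct arbitrarily fine covers of $\Lbd$ by shadows of hyperbolic balls centred at points of $X$ far from $o$, and bound the Hausdorff $\Delta$-sum of these covers by the tail of the Poincaré series. The resulting estimate forces $\mathcal{H}^\Delta(\Lbd) = 0$, which contradicts the positivity of the $\Delta$-dimensional Hausdorff measure established in Theorem~\ref{hausdorff}.

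First I would verify the covering property. Fix $\xi \in \Lbd$ and consider the hyperbolic ray from $o$ to $\xi$. Past some initial segment this ray lies in $H(\Lbd)$, so for every sufficiently large $R$ there is a point $z_R$ on the ray with $d(o,z_R)=R$ belonging to $H(\Lbd)$. Condition~(i) in the definition of a uniformly distributed set yields $x_R \in X$ with $d(x_R,z_R)\leq M$, hence $d(x_R,o)\geq R-M$. A standard hyperbolic geometry estimate then shows that for $\ell$ larger than $M$ plus a fixed additive constant (which reconciles the paper's definition of shadow based at the antipodal point $x_R^{*}$ with the natural shadow as seen from $o$), we have $\xi \in S(x_R,\ell)$. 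Thus for every large $R$ the family $\mathcal{C}_R = \{S(x,\ell) : x \in X,\ d(x,o)\geq R-M\}$ covers $\Lbd$.

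Next I would bound the Hausdorff sum. By standard hyperbolic geometry there is a constant $C(\ell)$ such that $\diam S(x,\ell) \leq C(\ell)\,e^{-d(x,o)}$ for all $x \in X \setminus \{o\}$, so
$$
\sum_{S \in \mathcal{C}_R} (\diam S)^\Delta \leq C(\ell)^\Delta \sum_{\substack{x \in X \\ d(x,o)\geq R-M}} e^{-\Delta d(x,o)}.
$$
The right-hand side is the tail of $P^\Delta(X,o)$. Under the standing assumption $P^\Delta(X,o) < \infty$, this tail tends to $0$ as $R \to \infty$, while simultaneously $\sup_{S \in \mathcal{C}_R} \diam S \leq C(\ell)\,e^{-(R-M)} \to 0$. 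Hence $\mathcal{H}^\Delta(\Lbd) = 0$, contradicting Theorem~\ref{hausdorff} and proving $P^\Delta(X,o) = \infty$.

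The only real subtlety is Step~1: one must check that a point of $X$ close to the geodesic from $o$ to $\xi$ does produce a shadow (in the sense of the paper) containing $\xi$, even though shadows are there defined from the antipodal point $x^{*}$ rather than from $o$. This is a routine comparison in hyperbolic geometry, absorbed by enlarging $\ell$ by a universal constant, so I do not expect it to be a serious obstacle; the core of the argument is the covering estimate, which follows directly from the definition of uniformly distributed set and the standard diameter estimate for shadows.
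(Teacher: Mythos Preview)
Your proof is correct and follows essentially the same route as the paper: the paper phrases the covering step as $\Lbd = \limsup\{S(x_i,\ell):i\in\N\}$ and then invokes the Borel--Cantelli lemma, which is just a compressed version of your explicit tail-cover argument leading to $\mathcal{H}^\Delta(\Lbd)=0$ and the contradiction with Theorem~\ref{hausdorff}. Your attention to the antipodal-point definition of the shadow is a nice touch that the paper glosses over, but as you note it is absorbed by choosing $\ell$ large enough.
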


\begin{proof}
For $X=\{x_i\}_{i \in \N}$, take the shadows $S(x_i,\ell) \subset \S^n$, 
$i \in \N$, of Euclidean diameter comparable to $e^{-d(x_i,o)}$ with constant of 
comparability only depending on $\ell$. Since
$$
\Lbd = \limsup \{S(x_i,\ell): i \in \N \},
$$
the convergence of $P^\Delta(X,o)$ implies, by means of 
the Borel-Cantelli lemma, that 
the $\Delta$-dimensional Hausdorff measure $m(\Lbd)$ of $\Lbd$ is zero. 
However, Theorem \ref{hausdorff} gives $m(\Lbd)>0$.
This shows that $P^\Delta(X,o)$ diverges.
\end{proof}

Conversely, one may ask whether the divergence type property of the 
Poincar\'e series for $X$ implies 
$\dim_H (\Lbd) = \h(\Lbd)=\dim_B(\Lbd)$.
There are closed sets $\Lbd \subset \S^n$ such that
$\dim_H (\Lbd) < \overline \dim_B (\Lbd)$,
for example those given by McMullen \cite{mcm84}.
This tells us that, by Theorem \ref{hausdorff}, not
all closed sets $\Lbd \subset \S^n$ necessarily
contain a uniformly distributed set $X$
of bounded type in their convex hull $H(\Lbd)$.

In the remainder of this section, we shall attempt to extend the above arguments
to the case where $H(\Lambda)$ contains horoballs. When $\Lambda=\Lambda(G)$ is 
the limit set of some Kleinian group $G$, this occurs mainly in the case where $G$ has 
parabolic elements and $H(\Lambda(G)) (\neq \B^{n+1})$ contains no uniformly 
distributed set of bounded type.
However, we want to include, say, the limit set of a geometrically finite 
Kleinian group with parabolics in the category of `preferred' closed sets.
Hence we weaken the definition of the bounded type condition in the following way.

\begin{definition}
\label{weaklybounded}
We say that a uniformly distributed set $X$ is of {\em weakly bounded type}
if there exist a constant $\rho \geq 1$ and a family of 
mutually disjoint horoballs $\{D_p\}_{p \in \Phi}$ in $\B^{n+1}$ 
with set of tangency points $\Phi \subset \S^n$ such that
$$
\frac{\card (X \cap B_R(x))}{\card (X \cap B_R(o))} \leq \rho
$$
for every $x \in X'$ and for every $R>0$. 
Here $X'=X \cap (\B^{n+1} \setminus \bigcup_{p \in \Phi} D_p)$ and
we assume that the origin $o \in \B^{n+1}$ belongs to $X'$.
\end{definition}

Under this assumption, we can also conclude that the Poincar\'e series for 
the uniformly distributed set $X$ is of divergence type if the corresponding 
Patterson measure has no atom on the set $\Phi$ of tangency points of horoballs. 
In this case, we are led to consider the packing dimension $\dim_P(\Lbd)$ of 
$\Lbd$ instead of the Hausdorff dimension.

\begin{theorem}\label{wb-packing}
Suppose that a uniformly distributed set $X$ in the convex hull of
$\Lbd \subset \S^n$ is of weakly bounded type and the Patterson measure 
$\mu_z$ for $X$ has no atom on the tangency points $p \in \Phi \subset \S^n$
of the horoballs $\{D_p\}_{p \in \Phi}$ appearing in the definition
of weak boundedness. 
Then the packing dimension $\dim_P(\Lbd)$ of $\Lbd$ coincides with 
$\Delta(X)=h_c(\Lbd)=\overline \dim_B(\Lambda)$. 
Moreover, $P^\Delta(X,o)=\infty$ for $\Delta=\Delta(X)$.
\end{theorem}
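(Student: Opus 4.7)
The plan is to mirror the strategy of Theorem~\ref{hausdorff} and Corollary~\ref{bounded-divergent}, adapting it to accommodate the points of $X$ trapped inside the horoballs $\{D_p\}_{p \in \Phi}$. Throughout, I set $X' := X \cap (\B^{n+1} \setminus \bigcup_{p \in \Phi} D_p)$, which by Definition~\ref{weaklybounded} satisfies the usual bounded type estimate for balls centred at points of $X'$. The Patterson construction of Section~\ref{patterson-for-X} (with a Patterson function in the convergence case) then produces a $\Delta$-conformal family $\{\mu_z\}$ of uniformly bounded total mass, and the proof of Theorem~\ref{shadow} applies verbatim to shadows $S(x,\ell)$ with $x \in X'$, giving
\[
\mu_o(S(x,\ell)) \leq A\,(\diam S(x,\ell))^\Delta \qquad \text{for all } x \in X'.
\]

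The upper bound $\dim_P(\Lbd) \leq \Delta(X)$ is immediate from the general inequality $\dim_P \leq \overline{\dim}_B$ combined with $\overline{\dim}_B(\Lbd) = \h(\Lbd) = \Delta(X)$ from Theorem~\ref{basicinequality} and Proposition~\ref{cc-entr-is-delta-X}. For the reverse inequality $\dim_P(\Lbd) \geq \Delta(X)$, I would first show that for every $\xi \in \Lbd \setminus \Phi$ there is a sequence $\{x_{i_k}\} \subset X'$ with $d(x_{i_k},o) \to \infty$ whose radial projections to $\S^n$ converge to $\xi$ and such that the shadow $S(x_{i_k},\ell)$ contains a spherical ball around $\xi$ of radius comparable to $\diam S(x_{i_k},\ell)$; this uses the uniform distribution of $X$ near the radial geodesic towards $\xi$, the fact that $\xi \neq p$ for every $p \in \Phi$ to ensure this geodesic leaves every horoball $D_p$, and at most countability of $\Phi$ (which follows from mutual disjointness of the horoballs) together with the no-atoms hypothesis to render $\Phi$ itself $\mu_o$-negligible. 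Combined with the shadow lemma, this gives
\[
\liminf_{r \to 0}\frac{\mu_o(B(\xi,r) \cap \S^n)}{r^\Delta} < \infty
\qquad\text{for $\mu_o$-a.e. } \xi \in \Lbd,
\]
from which the standard lower-density criterion for packing dimension (see e.g.\ \cite{mati95}) yields $\dim_P(\Lbd) \geq \Delta(X)$.

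The divergence assertion $P^\Delta(X,o) = \infty$ then follows by contradiction, paralleling Corollary~\ref{bounded-divergent}. If the series were convergent, then since $\diam S(x,\ell)$ is comparable to $e^{-d(x,o)}$, the shadow lemma on $X'$ would give $\sum_{x \in X'}\mu_o(S(x,\ell)) < \infty$, and the Borel--Cantelli lemma would render the set $\limsup\{S(x,\ell) : x \in X'\}$ a $\mu_o$-null set. But the covering step above identifies $\Lbd \setminus \Phi$ as a subset of this limsup, and $\mu_o(\Phi) = 0$ by hypothesis, forcing $\mu_o(\Lbd) = 0$ and contradicting $\sigma(\mu_o) = 1$.

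The main obstacle will be the covering step: verifying that $\mu_o$-almost every $\xi \in \Lbd$ is genuinely approached by points of $X'$—rather than merely of $X$—along its radial direction with quantitative control at every scale. The radial geodesic to $\xi \notin \Phi$ may still cross infinitely many horoballs $D_p$, and one must exploit the mutual disjointness of $\{D_p\}$ together with the uniform distribution of $X$ in $H(\Lbd)$ to guarantee that sufficiently many segments of the ray lie outside $\bigcup_p D_p$ at arbitrarily small scales, so that points of $X'$ shadow the ray densely enough. The no-atoms hypothesis is precisely what allows one to discard the otherwise pathological tangency set $\Phi$.
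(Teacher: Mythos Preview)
Your proposal is correct and follows essentially the same route as the paper: restrict to $X'$, apply the shadow lemma there to run Sullivan's density argument and obtain $\dim_P(\Lbd)\geq\Delta(X)$, combine with $\dim_P\leq\overline{\dim}_B$ for equality, and then derive divergence via Borel--Cantelli on the shadows of $X'$. The only notable difference is that in the divergence step the paper contradicts positivity of the $\Delta$-packing measure of $\Lbd'=\Lbd\setminus\Phi$, whereas you contradict $\sigma(\mu_o)=1$ directly via $\mu_o$; your version is arguably cleaner, and your explicit discussion of the covering step $\Lbd'\subset\limsup\{S(x,\ell):x\in X'\}$ fills in a point the paper asserts without detail.
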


\begin{proof}
Since the Patterson measure $\mu_z$ has no atom on any $p \in \Phi$, 
we have $\mu_o(\Lbd')>0$ for $\Lbd'=\Lbd \setminus \Phi$.
Using Theorem \ref{shadow}
for $X'=X \cap (\B^{n+1} \setminus \bigcup_{p \in \Phi} D_p)$
and applying the same argument as in Sullivan \cite{sul84}, 
we see that the $\Delta$-dimensional packing measure $\nu$
satisfies $\nu(\Lbd') \geq c \mu_o(\Lbd')$ for some constant $c>0$. 
Hence $\nu(\Lbd)>0$, which implies that $\dim_P(\Lbd) \geq \Delta(X)=h_c(\Lbd)$.
On the other hand, since the packing dimension $\dim_P(\Lambda)$ is always less 
than or equal to the upper box-counting dimension $\overline \dim_B(\Lambda)$, 
we have $\dim_P (\Lbd)=h_c(\Lbd)$.

The proof of the latter statement is carried out similarly to 
Corollary \ref{bounded-divergent}
by using $X'=\{x_i\}_{i \in \N}$ instead of $X$.
Namely, take the shadow $S(x_i,\ell) \subset \S^n$ of Euclidean diameter comparable to $e^{-d(x_i,o)}$
for each $i$, and represent
$\Lbd' = \limsup \{S(x_i,\ell): i \in \N \}$.
By the Borel-Cantelli lemma,
the convergence $P^\Delta(X',o) \leq P^\Delta(X,o)<\infty$ implies
that $\nu(\Lbd')=0$. Hence $\nu(\Lbd')>0$ yields $P^\Delta(X,o)=\infty$.
\end{proof}

A sufficient condition for the Patterson measure to have no atom on $\Phi$
can be easily given in the usual way when $\Lbd$ is the limit set of some 
Kleinian group.

\begin{corollary}\label{wbimplydivergence}
Let $G$ be a non-elementary Kleinian group acting on $\B^{n+1}$ all of whose 
parabolic fixed points are bounded. If a 
uniformly distributed set $X$ in $H(\Lbd(G))$ is of 
weakly bounded type for the set $\Phi$ of all parabolic fixed points of $G$, 
then $\dim_P(\Lbd(G))=\Delta(X)$ and $P^\Delta(X,o)=\infty$. 
\end{corollary}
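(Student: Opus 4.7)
The plan is to reduce the corollary to Theorem~\ref{wb-packing}. That theorem yields both desired conclusions, $\dim_P(\Lbd(G))=\Delta(X)$ and $P^\Delta(X,o)=\infty$, under two hypotheses: that $X$ be of weakly bounded type, and that the Patterson measure $\mu_z$ for $X$ carry no mass on the tangency set $\Phi$. The first hypothesis is given, so the corollary reduces to verifying the no-atom condition under the assumption that every parabolic fixed point of $G$ is bounded.

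To show $\mu_o(\{p\})=0$ for each $p \in \Phi$, I would use the standard Patterson-Sullivan argument at bounded parabolic fixed points, adapted to our weakly bounded setting. Suppose for contradiction $\mu_o(\{p\})=\alpha>0$. The bounded parabolic hypothesis says that $\Pi=\mathrm{Stab}_G(p)$ acts cocompactly on $H(\Lbd(G)) \cap \partial D_p$, so a sequence of $\Pi$-orbit points $y_i$ on the horosphere $\partial D_p$ can be chosen accumulating at $p$ in $\overline{\B^{n+1}}$. By uniform distribution of $X$, for each $y_i$ there exists $x_i \in X$ with $d(x_i,y_i)\leq M$; by a harmless shrinking of the horoballs entering the weakly bounded type definition we may arrange $x_i \in X'=X \setminus \bigcup_{q \in \Phi}D_q$. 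The shadow lemma (Theorem~\ref{shadow}), applicable because each $x_i \in X'$, then yields
\[
\mu_o(S(x_i,\ell)) \leq A\,(\diam S(x_i,\ell))^\Delta.
\]
For $\ell$ chosen large enough that $p \in S(x_i,\ell)$ for all sufficiently large $i$, the left-hand side is bounded below by $\alpha$, while the right-hand side tends to zero as $x_i \to p$. This forces $\alpha=0$, contradicting the assumption $\mu_o(\{p\})=\alpha>0$.

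The main technical obstacle is the geometric bookkeeping needed to guarantee that the shadow $S(x_i,\ell)$ of a tangentially approaching $x_i$ actually captures the boundary point $p$ for a uniform radius $\ell$, since $x_i$ cannot approach $p$ along the radial geodesic without entering $D_p$ and leaving $X'$. This is the classical subtlety of Patterson-Sullivan theory at bounded parabolic fixed points: one must work in horospherical coordinates at $p$ and exploit the cocompactness of $\Pi$ to locate $\Pi$-translates whose visual endpoint from $o$ lies within the shadow's Euclidean radius of $p$. Once this geometry is arranged, the rest is a routine application of the weakly bounded type machinery developed in Section~\ref{patterson-for-X}.
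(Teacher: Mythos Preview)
Your reduction to Theorem~\ref{wb-packing} is correct and matches the paper: everything comes down to showing that the Patterson measure carries no atom at any $p\in\Phi$.

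The shadow approach you propose for this step does not work, and the obstacle you flag at the end is fatal rather than merely technical. For $x_i$ on (or within bounded distance of) the horosphere $\partial D_p$, the point $p$ is \emph{never} in $S(x_i,\ell)$ for any fixed $\ell$. In the upper half-space model with $p=\infty$ and $x_i=(a_i,1)\in\partial D_p$, the geodesic from the antipode $x_i^*$ to $\infty$ is a vertical line whose hyperbolic distance to $x_i$ is approximately $\log|a_i|\approx\tfrac12\, d(o,x_i)\to\infty$. So the radius $\ell$ required to capture $p$ in the shadow grows without bound as $x_i\to p$ along the horosphere. No rearrangement by $\Pi$-translates repairs this, since every such translate sits at the same horospherical height and hence has the same shadow geometry relative to $p$.

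The paper resolves the no-atom question differently: it arranges for $X'$ to be $G$-invariant (this costs nothing, since $\Delta(X)=\h(\Lbd(G))$ and the divergence type of $P^s(X,o)$ depend only on $H(\Lbd(G))$, not on the particular uniformly distributed set chosen). The Patterson measure built from a $G$-invariant set is then $G$-quasi-invariant with the standard conformal Radon--Nikodym derivative, and the classical argument for group Patterson measures at bounded parabolic fixed points applies directly to give $\mu_o(\{p\})=0$. The boundedness hypothesis on the parabolic fixed points enters precisely through that classical argument, not through shadow geometry.
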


\begin{proof}
We can choose $X'=X \cap (\B^{n+1} \setminus \bigcup_{p \in \Phi} D_p)$ 
invariant under $G$. Then, a similar argument as in the case of Patterson 
measures defined by an orbit of $G$ gives that the Patterson measure 
$\mu'_z$ for $X'$ has no atom on the bounded parabolic fixed points of $G$.
The statement therefore follows from the proof of the previous theorem.
\end{proof}

\begin{remark*}
It is known that, if $\overline \dim_B(\Lbd \cap U)=\overline \dim_B(\Lbd)$
for every open set $U \subset \S^n$ that intersects $\Lbd$,
then $\dim_P(\Lbd)=\overline \dim_B(\Lbd)$ 
(see e.g. Falconer \cite[Corollary 3.9]{fal90}).
This also yields the first assertion of the above corollary.
\end{remark*}

\section{Kleinian groups of bounded type}
\label{bd-type-cond}

For a Kleinian group $G$ acting on $\mathbb B^{n+1}$,
we consider the problem whether there is
a uniformly distributed set $X$ of bounded type
in the convex hull $H(\Lbd(G))$ of the limit set $\Lbd(G) \subset \S^n$.
When this is the case, we say that the Kleinian group $G$ is of bounded type.
If $G$ is of bounded type, 
then $\dim_H(\Lbd(G))=\h(\Lbd(G))=\dim_B(\Lbd(G))$ by Theorem \ref{hausdorff}.
Any convex cocompact Kleinian group $G$ is of bounded type
because the orbit $X$ of any point in $H(\Lbd(G))$ is
a $G$-invariant uniformly distributed set, and in particular of bounded type.

First, we show that the bounded type property is inherited to normal subgroups.

\begin{proposition}
If $G$ is a non-trivial normal subgroup of a convex cocompact Kleinian group
$\hat G$, then $G$ is of bounded type. More generally, any 
non-trivial normal subgroup $G$ of a Kleinian group
$\hat G$ of bounded type is also of bounded type.
\end{proposition}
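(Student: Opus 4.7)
The plan is to exploit the classical fact that a non-trivial normal subgroup $G$ of a non-elementary Kleinian group $\hat G$ shares its limit set with $\hat G$, so that $H(\Lbd(G))=H(\Lbd(\hat G))$. Once this identification is established, the same uniformly distributed set of bounded type that witnesses the property for $\hat G$ serves verbatim for $G$, since the bounded type condition is a property of the underlying discrete point set and not of the ambient group.

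Concretely, I would proceed in three steps. First, observe that $\hat G$ is non-elementary in both parts of the statement: a convex cocompact Kleinian group admitting a non-trivial normal subgroup is automatically non-elementary, and a Kleinian group of bounded type is non-elementary in any non-degenerate sense (otherwise $H(\Lbd(\hat G))$ would be at most a single geodesic and the discussion would be vacuous). Second, invoke the standard equality $\Lbd(G)=\Lbd(\hat G)$ valid whenever $G$ is a non-trivial normal subgroup of a non-elementary Kleinian group $\hat G$; the justification is that loxodromic fixed points are dense in $\Lbd(\hat G)$, and if $h\in G$ is any non-trivial loxodromic element with fixed point $\eta$, then for every $g\in\hat G$ the conjugate $ghg^{-1}$ lies in $G$ and has fixed point $g(\eta)$, so the $\hat G$-orbit of $\eta$ lies in $\Lbd(G)$ and is dense in $\Lbd(\hat G)$. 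Third, apply the bounded type hypothesis on $\hat G$ to obtain a uniformly distributed set $\hat X\subset H(\Lbd(\hat G))$, with constants $M,m$ from the definition of uniform distribution, and a constant $\rho\geq 1$ so that
$$
\card(\hat X\cap B_R(x))\leq \rho\,\card(\hat X\cap B_R(o))
$$
for every $x\in\hat X$ and every $R>0$. Setting $X:=\hat X$ and using $H(\Lbd(G))=H(\Lbd(\hat G))$, the set $X$ is uniformly distributed in $H(\Lbd(G))$ with the very same constants $M$ and $m$, and the bounded type inequality above persists unchanged. Hence $G$ is of bounded type.

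There is no genuine obstacle: the whole argument reduces to noting that uniform distribution and bounded type are intrinsic features of the point set, and the transfer from $\hat G$ to $G$ is mediated entirely by the coincidence of limit sets. The only point requiring verification is the equality $\Lbd(G)=\Lbd(\hat G)$, which is standard and has already been used implicitly elsewhere in the paper (for instance in the proof of Theorem~\ref{normalsubgroup}).
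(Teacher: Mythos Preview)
Your proposal is correct and follows essentially the same approach as the paper: both arguments reduce to the equality $\Lbd(G)=\Lbd(\hat G)$ and then reuse the same uniformly distributed set of bounded type. You supply more detail than the paper (which simply asserts the limit set equality and concludes in two lines), in particular a justification of $\Lbd(G)=\Lbd(\hat G)$ via conjugation of loxodromic elements, but the underlying idea is identical.
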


\begin{proof}
Let $X$ be a uniformly distributed set of bounded type in
the convex hull $H(\Lbd(\hat G))$.
Since $\Lbd(\hat G)=\Lbd(G)$, their convex hulls are the same,
and hence $X$ is also a uniformly distributed set for $\Lbd(G)$.
\end{proof}

Next, we investigate if certain deformations of Kleinian groups preserve
the bounded type property. Recall that a homeomorphism 
$f:\B^{n+1} \to \B^{n+1}$ is a $K$-quasi-isometry for $K \geq 1$ if
there is a constant $L \geq 0$ such that
$$
\frac{1}{K}d(x,y)-\frac{L}{K} \leq d(f(x),f(y)) \leq K d(x,y)+L
$$
for any $x$ and $y$ in $\B^{n+1}$. Also, a $1$-quasi-isometry is 
referred to as a {\it rough isometry}.

\begin{proposition}\label{roughiso}
Let $G'$ be a rough-isometric deformation of a Kleinian group $G$
of bounded type. Then $G'$ is also of bounded type.
\end{proposition}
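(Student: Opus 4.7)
The plan is to transport a witnessing uniformly distributed set through the rough isometry. Interpret the hypothesis as follows: there is a rough isometry $f:\B^{n+1}\to\B^{n+1}$ (so $|d(f(x),f(y))-d(x,y)|\leq L$ for some $L\geq 0$) that sends $H(\Lbd(G))$ into a bounded Hausdorff-neighbourhood of $H(\Lbd(G'))$, and similarly for $f^{-1}$, which is automatically a rough isometry with the same constant. Starting from a uniformly distributed set $X=\{x_i\}\subset H(\Lbd(G))$ of bounded type, with constants $m,M,\rho$ and $o\in X$, I would form $f(X)$, which lies close to $H(\Lbd(G'))$ but need not be $m$-separated (since $f$ can contract distances by up to $L$). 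I would then extract from $f(X)$ a maximal $m'$-separated subset $Y$ for some fixed small $m'>0$, and verify that $Y$ is uniformly distributed in $H(\Lbd(G'))$: separation is by construction, and the covering constant follows by combining that every point of $H(\Lbd(G'))$ is within a bounded distance of some $f(x)$ for $x\in X$ (using uniform distribution of $X$ and the rough isometry bound for $f^{-1}$) with the fact that every point of $f(X)$ lies within $m'$ of some point of $Y$.

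Next I would establish the bounded type condition for $Y$ by transferring counting functions. For the upper bound, given $y\in Y$ with $y=f(x_y)$, the map $f(x)\mapsto x$ embeds $Y\cap B_R(y)$ injectively into $X\cap B_{R+L}(x_y)$, so bounded type of $X$ yields
$$\card(Y\cap B_R(y))\;\leq\;\card(X\cap B_{R+L}(x_y))\;\leq\;\rho\cdot\card(X\cap B_{R+L}(o)).$$
For the lower bound, choosing $o'\in Y$ near $f(o)$, the assignment $x\mapsto y(x)$ that sends each $x\in X\cap B_{R-L'}(o)$ (for a suitable $L'=L'(L,m',M)$) to a point $y(x)\in Y$ within $m'$ of $f(x)$ is at most $N$-to-one, where $N$ bounds the number of points of $X$ inside any ball of radius $m'+L$; such $N$ is finite since $X$ is $m$-separated. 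This gives
$$\card(Y\cap B_R(o'))\;\geq\;N^{-1}\cdot\card(X\cap B_{R-L'}(o)).$$
Dividing, the bounded type constant for $Y$ reduces to controlling the ratio $\card(X\cap B_{R+L}(o))/\card(X\cap B_{R-L'}(o))$ by a constant independent of $R$. This shell comparison follows from bounded type of $X$ together with uniform distribution: every $x\in X\cap B_{R+L}(o)$ is within a bounded distance of some $z_x\in X\cap B_{R-L'}(o)$ (project along the geodesic to $o$ by a controlled amount and apply the $M$-covering condition), and the multiplicity of this map is bounded by $\rho\cdot\card(X\cap B_{C}(o))$ for a fixed $C=C(L,L',M)$, hence by a uniform constant.

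The main obstacle I foresee is the correct interpretation of \emph{rough-isometric deformation}: in particular whether $f$ maps $H(\Lbd(G))$ into $H(\Lbd(G'))$ exactly or only up to bounded Hausdorff distance, and whether any equivariance with the group actions is needed. The latter appears unnecessary because the bounded type condition is a purely geometric-combinatorial property of point sets in the convex hull; any loss of containment can be absorbed by projecting each $f(x)$ to a nearby point of $H(\Lbd(G'))$, since all the estimates above are only up to additive constants. Once this is handled, the argument is a systematic bookkeeping exercise with the rough-isometry triangle inequalities and the bounded type estimate for $X$.
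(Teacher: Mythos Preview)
Your proposal is correct and follows essentially the same route as the paper: push the witnessing set $X$ forward by the rough isometry $f$, use the two inequalities
\[
\card(f(X)\cap B_R(f(x)))\leq \card(X\cap B_{R+L}(x)),\qquad
\card(f(X)\cap B_R(f(o)))\geq \card(X\cap B_{R-L}(o)),
\]
and then control the shell ratio $\card(X\cap B_{R+L}(o))/\card(X\cap B_{R-L}(o))$ by a constant independent of $R$. The paper simply writes this last ratio as $\leq c\,e^{2Ln}$ without further comment; your geodesic-projection-plus-$M$-covering argument is exactly what justifies that constant.

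The only substantive difference is bookkeeping. The paper interprets ``rough-isometric deformation'' as $G'=fGf^{-1}$ for a rough-isometric automorphism $f$ of $\B^{n+1}$, works with $X'=f(X)$ directly, and dispatches both the separation issue and the ``$X'$ need not lie inside $H(\Lbd(G'))$'' issue in one sentence by composing with a further rough-isometric automorphism of $\B^{n+1}$ conjugating $G'$ to itself. You instead thin $f(X)$ to a maximal $m'$-separated subset $Y$ and project into the convex hull; this is slightly more laborious (you then need the $N$-to-one multiplicity bound to recover the lower count for $Y$), but it is more self-contained and avoids the somewhat opaque ``compose with another rough isometry'' step. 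Either way the heart of the argument is identical.
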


\begin{proof}
Assume that $G'$ is obtained from $G$ by a rough-isometric automorphism  
$f:\B^{n+1} \to \B^{n+1}$ such that $G'=fGf^{-1}$.
Given a uniformly distributed set $X$ of bounded type for $\Lbd(G)$
satisfying by definition
$$
\frac{\card (X \cap B_R(x))}{\card (X \cap B_R(o))} \leq \rho
$$
for every $x \in X$ and for every $R>0$,
we consider $X'=f(X)$, which may not be in the convex hull
$H(\Lbd(G'))$ of $G'$. By composing, if necessary, with another 
rough-isometric automorphism conjugating $G'$ onto itself,
we may assume that $X'$ is a uniformly distributed set in $H(\Lbd(G'))$.
Since $f$ satisfies 
$$
d(x,y)-L \leq d(f(x),f(y)) \leq d(x,y)+L
$$
for some $L \geq 0$, we have
\begin{eqnarray*}
\card (f(X) \cap B_R(f(x))) &\leq& \card (X \cap B_{R+L}(x));\\
\card (f(X) \cap B_R(f(o))) &\geq& \card (X \cap B_{R-L}(o)).
\end{eqnarray*}
This implies that
\begin{eqnarray*}
\frac{\card (f(X) \cap B_R(f(x)))}{\card (f(X) \cap B_R(f(o)))} &\leq&
\frac{\card (X \cap B_{R+L}(x))}{\card (X \cap B_{R-L}(o))}\\
&=&
\frac{\card (X \cap B_{R+L}(x))}{\card (X \cap B_{R+L}(o))}\cdot
\frac{\card (X \cap B_{R+L}(o))}{\card (X \cap B_{R-L}(o))}\\
&\leq&
\rho \, c \, e^{2Ln}
\end{eqnarray*}
for some constant $c>0$,
which shows that $X'=f(X)$ is of bounded type.
\end{proof}

This observation leads us to the following problem. 

\begin{problem}
Let $G'$ be a quasi-isometric deformation of a Kleinian group $G$
of bounded type. Is then $G'$ also of bounded type?
\end{problem}

So far, we have seen that the following classes of Kleinian groups $G$ are of 
bounded type and hence satisfy $\dim_H(\Lbd(G))=\h(\Lbd(G))=\dim_B(\Lbd(G))$:
\begin{enumerate}
\item
Kleinian groups of the first kind ($\Lbd(G)=\S^n$);
\item
Convex cocompact Kleinian groups;
\item
non-trivial normal subgroups of Kleinian groups of bounded type;
\item
rough-isometric deformations of Kleinian groups of bounded type.
\end{enumerate}
In what follows, we exhibit an example which does not belong to the 
above classes but still is of bounded type.

The construction of this example is based on the following theorem
describing properties of
the Diestel-Leader graph $DL(m,n)$ for $m,n \geq 2$ and $m \neq n$,
which is proved by Eskin, Fisher and Whyte \cite{efw}.

\begin{theorem}\label{Diestel-Leader}
There exists a locally finite graph $\Gr=(V,E)$ such that the group of 
isometries of $\Gr$ acts transitively on the vertex set $V$ but $\Gr$ is not
quasi-isometric to the Cayley graph of any finitely generated group.
\end{theorem}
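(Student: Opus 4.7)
The plan is to take $\Gr = DL(m,n)$, the Diestel--Leader graph, for any fixed $m,n \geq 2$ with $m \neq n$. Recall this is built as follows: let $T_m$, $T_n$ denote the $(m+1)$- and $(n+1)$-regular infinite trees, each equipped with a Busemann (height) function $h_m : T_m \to \Z$ and $h_n : T_n \to \Z$ relative to a chosen boundary point; set
$$
V = \{(x,y) \in T_m \times T_n : h_m(x) + h_n(y) = 0\},
$$
with an edge $(x,y)\sim(x',y')$ exactly when $x$ is adjacent to $x'$ in $T_m$ and $y$ is adjacent to $y'$ in $T_n$. Local finiteness of $\Gr=(V,E)$ is immediate, each vertex having degree $(m+1)(n+1)$.

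For vertex-transitivity I would exhibit several explicit families of isometries and compose them. The end-stabiliser in $\mathrm{Aut}(T_m)$ acts transitively on each horocycle $\{h_m=k\}$, and similarly for $\mathrm{Aut}(T_n)$; these combine to give a subgroup of $\mathrm{Isom}(\Gr)$ transitive on each height slice. In addition one has height-shifting isometries obtained by simultaneously translating along geodesics towards the chosen ends of the two trees, moving $h_m$ up by $1$ while moving $h_n$ down by $1$. Composing horocyclic moves in both factors with such height shifts sends any vertex of $\Gr$ to any other, establishing the first assertion.

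The substantive content is that $\Gr$ is not quasi-isometric to any Cayley graph of a finitely generated group. Here I would invoke the rigidity theorem of Eskin--Fisher--Whyte proved via coarse differentiation: every self-quasi-isometry of $DL(m,n)$ with $m\neq n$ lies at bounded distance from an honest isometry and, in particular, respects the height function up to sign and an additive constant. Arguing by contradiction, suppose $\Gr$ were quasi-isometric to the Cayley graph of a finitely generated group $\Gamma$. Transferring the left-multiplication action of $\Gamma$ on its Cayley graph through the quasi-isometry and its quasi-inverse produces a cobounded proper quasi-action of $\Gamma$ on $\Gr$; the rigidity theorem upgrades this to a proper cocompact action of $\Gamma$ on $\Gr$ by genuine isometries with finite kernel, realising $\Gamma$ as a cocompact lattice in $\mathrm{Isom}(\Gr)$. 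A structural analysis carried out in \cite{efw} then shows that when $m \neq n$ the asymmetry between the two tree factors prevents any cocompact lattice in $\mathrm{Isom}(DL(m,n))$ from acting vertex-transitively with trivial stabilisers, which is precisely what would be required to realise $\Gr$ as a Cayley graph. This is the desired contradiction.

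The overwhelmingly dominant obstacle is the first half of the last paragraph, namely the coarse differentiation rigidity theorem for quasi-isometries of $DL(m,n)$; this is the technical core of \cite{efw} and cannot be reproduced in a sketch. The construction, vertex-transitivity, and the upgrade-to-lattice-then-contradict step are essentially formal by comparison.
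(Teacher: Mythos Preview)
The paper does not provide its own proof of this theorem; it is quoted as a black-box result of Eskin, Fisher and Whyte \cite{efw}, with the Diestel--Leader graph $DL(m,n)$ for $m \neq n$ named explicitly as the example. Your sketch is therefore not competing against any argument in the paper but is rather an outline of the EFW proof itself.

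As such an outline it is broadly on track, but two points need correction. First, a minor slip: with your conventions (each vertex of $T_m$ having one neighbour in the direction of the chosen end and $m$ in the other), the degree of a vertex of $DL(m,n)$ is $m+n$, not $(m+1)(n+1)$, since moving up in one tree factor forces moving down in the other. Second, and more substantively, your formulation of the final contradiction is off. You say the obstruction is that no cocompact lattice in $\mathrm{Isom}(DL(m,n))$ can act vertex-transitively with trivial stabilisers, ``which is precisely what would be required to realise $\Gr$ as a Cayley graph.'' But being \emph{quasi-isometric} to a Cayley graph of $\Gamma$ does not require $DL(m,n)$ itself to \emph{be} a Cayley graph of $\Gamma$, so this is not the right target. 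The actual obstruction is cleaner: for $m \neq n$ the locally compact group $\mathrm{Isom}(DL(m,n))$ is non-unimodular (the modular function involves $\log(m/n)$), and non-unimodular groups admit no lattices whatsoever. Once the coarse-differentiation rigidity upgrades the quasi-action of $\Gamma$ to a genuine proper cocompact isometric action, $\Gamma$ would be such a lattice, and that is already the contradiction.
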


We also need the following result, which is given by a similar 
argument to Proposition \ref{roughiso}.

\begin{proposition}\label{uniformL}
Let $X$ be a uniformly distributed set in the convex hull 
$H(\Lbd)$ of a closed set $\Lbd\subset \S^n$. 
Assume that there exists a uniform constant $L \geq 0$ such that, 
for any two points $x_1$ and $x_2$ in $X$, there is a rough-isometric 
automorphism $f$ of $\B^{n+1}$ with additive constant $L$ 
satisfying $f(X)=X$ and $f(x_1)=x_2$. Then $X$ is of bounded type.
\end{proposition}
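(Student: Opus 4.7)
The plan is to combine a direct application of the rough-isometric homogeneity with a growth estimate for uniformly distributed sets; the former will replace counting near an arbitrary $x \in X$ by counting near a fixed reference point $o \in X$, while the latter controls the growth of $\card(X \cap B_R(o))$ in $R$. We fix $o \in X$ as the origin.

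First, given any $x \in X$ I would apply the hypothesis to the ordered pair $(o, x)$ to obtain a rough-isometric automorphism $f$ of $\B^{n+1}$ with $f(X) = X$, $f(o) = x$ and additive constant $L$. Since $f$ is a homeomorphism restricting to a bijection of $X$, and since $d(f(z), x) = d(f(z), f(o)) \geq d(z, o) - L$ forces $f^{-1}(B_R(x)) \subset B_{R+L}(o)$, one obtains
\begin{equation*}
\card(X \cap B_R(x)) = \card(X \cap f^{-1}(B_R(x))) \leq \card(X \cap B_{R+L}(o)).
\end{equation*}

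Next, I would establish the following purely metric growth estimate, valid for any uniformly distributed set with $o \in X$ and already implicit in the proof of Proposition~\ref{roughiso}: there is a constant $K = K(L, M, m, n)$ such that $\card(X \cap B_{R+L}(o)) \leq K \, \card(X \cap B_R(o))$ for every $R > 0$. The argument is a geodesic-pushing one. For $y \in X \cap B_{R+L}(o)$ with $d(y, o)$ sufficiently large, walk along the geodesic from $y$ to $o$ by a distance slightly exceeding $L$ to land in $B_R(o)$; by the covering property of uniform distribution (constant $M$) there is then a point $x(y) \in X$ within $M$ of that point, hence $x(y) \in X \cap B_R(o)$ with $d(x(y), y) \leq L + 3M$; by the separation property of $X$ (constant $m$) and a volume comparison in $\B^{n+1}$, the preimage under $y \mapsto x(y)$ of any point in $X \cap B_R(o)$ has at most a universal number $D = D(L, M, m, n)$ of elements. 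The finitely many $y$ too close to $o$ for this procedure to apply are absorbed into $K$, using that $\card(X \cap B_R(o)) \geq 1$ for every $R \geq 0$.

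Combining the two steps yields $\card(X \cap B_R(x)) \leq K \, \card(X \cap B_R(o))$ for every $x \in X$ and every $R > 0$, which is exactly the bounded-type condition with $\rho = K$. The main obstacle is the growth estimate itself; for an arbitrary countable discrete subset of $H(\Lbd)$ the ratio $\card(X \cap B_{R+L}(o))/\card(X \cap B_R(o))$ need not be bounded, and both the covering and separation aspects of uniform distribution are essential---the covering to produce a replacement $x(y) \in X \cap B_R(o)$, and the separation to bound the multiplicity of $y \mapsto x(y)$. Once this is in hand, the rough-isometric homogeneity enters only to transport the bound from $B_R(x)$ to $B_{R+L}(o)$, and plays essentially the same role as the conjugating map in Proposition~\ref{roughiso}.
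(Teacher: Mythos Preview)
Your proof is correct and follows essentially the same two-step approach as the paper: first use the rough-isometric homogeneity to obtain $\card(X \cap B_R(x)) \leq \card(X \cap B_{R+L}(o))$, then bound the growth ratio $\card(X \cap B_{R+L}(o))/\card(X \cap B_R(o))$ by a constant depending only on $L$, $n$, and the uniform-distribution parameters. The paper's proof is terser---it simply invokes the same constants as in the proof of Proposition~\ref{roughiso} for the second step---whereas you spell out the geodesic-pushing argument (using both the covering constant $M$ to produce $x(y)$ and the separation constant $m$ to bound the multiplicity), which is a welcome clarification since the paper never actually writes that argument down.
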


\begin{proof}
For any two points $x_1$ and $x_2$ in $X$, we have
$$
\card (X \cap B_R(x_2)) \leq \card (X \cap B_{R+L}(x_1)),
$$
and hence
$$
\frac{\card (X \cap B_R(x_2))}{\card (X \cap B_R(x_1))} \leq
\frac{\card (X \cap B_{R+L}(x_1))}{\card (X \cap B_R(x_1))}
\leq \rho \, c \, e^{Ln},
$$
where $\rho \geq 1$ and $c>0$ are the same constants as in the proof of 
Proposition~\ref{roughiso}.
This implies that $X$ is of bounded type.
\end{proof}

\begin{example}
We take a graph $\Gr$ as in Theorem \ref{Diestel-Leader} and let $k$ be
the valency of each vertex of $\Gr$. Consider a hyperbolic surface $Q$
with $k+1$ geodesic boundary components that is homeomorphic to a $(k+1)$-holed 
sphere. One boundary component of $Q$ is reserved for gluing a funnel 
(half annulus) and the other $k$ components are used for the following 
construction tracing $\Gr$. Put a copy of $Q$ on each vertex of $\Gr$ and 
fit them together according to the structure of $\Gr$ by gluing adjacent 
geodesic boundaries of $Q$. The resulting hyperbolic surface is denoted by $S$. 
It is easy to see that $\Gr$ and the convex core $C(S)$ of $S$ are 
quasi-isometric to each other.

We first show that $S$ is not quasi-isometric to any normal cover of any
convex compact surface, that is, a hyperbolic surface given by a 
convex cocompact Fuchsian group. Suppose to the contrary that $S$  
is quasi-isometric to a normal cover $\Sigma$ of a convex compact 
surface $\hat \Sigma$. It follows that the convex cores $C(S)$ and 
$C(\Sigma)$ are quasi-isometric. Then $C(\Sigma)$ is quasi-isometric to the 
Cayley graph of the covering group isomorphic to 
$\pi_1(\hat \Sigma)/\pi_1(\Sigma)$.
Since $\pi_1(\hat \Sigma)$ is finitely generated, so is 
$\pi_1(\hat \Sigma)/\pi_1(\Sigma)$.
This implies that $\Gr$ is quasi-isometric to the Cayley graph of 
a finitely generated group, which contradicts Theorem \ref{Diestel-Leader}.

Represent $S=\B^2/G$ by a Fuchsian group $G$. Choose a point $\hat x$ 
in each $Q$ and let $\hat X$ be the set of all such points $\hat x$ in $C(S)$. 
Then, define $X$ as the inverse image of $\hat X$ under the universal cover 
$\B^2 \to S$, which is thus a uniformly distributed set in the convex hull 
$H(\Lbd(G))$. We will prove that there is a uniform constant $L \geq 0$ such that,
for any two points $x_1$ and $x_2$ in $X$, there is a rough-isometry 
$f: \B^2 \to \B^2$ with additive constant $L$ satisfying
$f(x_1)=x_2$, $f(X)=X$ and $fGf^{-1}=G$.
Then, by Proposition \ref{uniformL},
we see that $X$ is of bounded type, and hence $G$ is of bounded type.
If $x_1$ and $x_2$ project to the same point in $\hat X$ by the universal cover,
then $f$ can be taken as an element of the covering group, which is an isometry 
of $\B^2$.
Hence we may assume that $x_1$ and $x_2$ project to distinct points $\hat x_1$ 
and $\hat x_2$ in $\hat X$.

Let $v_1$ and $v_2$ be the vertices of $\Gr$ corresponding to $\hat x_1$ 
and $\hat x_2$ respectively.
Since there is an isometry $\sigma$ of $\Gr$ sending $v_1$ to $v_2$, 
we can take a piecewise isometry $\hat f_*$ of $S$ preserving the structure of 
the union of $Q$ and mapping $\hat x_1$ to $\hat x_2$.
Then we modify $\hat f_*$ to an automorphism $\hat f:S \to S$
so that the action on $\Gr$ induced by $\hat f$ is the same as $\sigma$.
Actually, on each $Q \subset S$ associated with a vertex $v \in \Gr$ with the 
adjacent edges $\{e_1, \ldots, e_k\}$, the difference between
$\sigma$ and $\hat f_*$ appears as a permutation of the set of edges 
$\{e_1, \ldots, e_k\}$.
Hence we have only to modify $\hat f_*$ to $\hat f$ on each $Q$ according to 
the permutation.
Since the number of such permutations is finite, there is a uniform constant 
$K \geq 1$ such that the modified map $\hat f$ can be chosen as a
$K$-quasi-isometry on each $Q$.
We can find similar arguments in Bonfert-Taylor et al. \cite{bcst}.
Since the action on $\Gr$ induced by $\hat f$ is isometric, we see that such
$\hat f$ is a rough-isometry with an additive constant $L$ depending only on 
$S$ and $K$.
Lifting $\hat f$ to $\B^2$, we obtain the required rough-isometry $f$.
\end{example}

Although the above construction might give the impression that only some 
special Kleinian groups beside the ones in the previous list are of bounded type,
we expect that there are a large number of such groups.
We formulate our expectation as follows. Note that, if we want to allow a Kleinian group
to have parabolic elements, we have to replace the bounded type condition 
with the weakly bounded type condition given in Definition \ref{weaklybounded}.
(A Kleinian group is of weakly bounded type if it has this property.)

\begin{conjecture}
\label{bd-from-bdry-bd-type}
If the convex hull $H(\Lbd(G))$ of the limit set of a Kleinian group~$G$ 
of divergence type has the (weak) tightness property, 
then $G$ is of (weakly) bounded type.
\end{conjecture}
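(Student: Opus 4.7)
The pivotal observation is that a $G$-equivariantly constructed uniformly distributed set $X \subset H(\Lbd(G))$ automatically satisfies the weakly bounded type inequality with constant $\rho = 1$. Indeed, if $\{D_p\}_{p \in \Phi}$ is chosen $G$-invariantly and $X$ transforms equivariantly under $G$, then for any $x = go \in X'$ one has $\card(X \cap B_R(x)) = \card(g^{-1}X \cap B_R(o)) = \card(X \cap B_R(o))$ by isometric invariance, so the ratios in Definition~\ref{weaklybounded} are identically $1$. Thus the entire problem reduces to producing a $G$-equivariant uniformly distributed set in $H(\Lbd(G))$.

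The natural candidate is $X := Go \cup \bigcup_{p \in \Phi} X_{D_p}$, where $o \in H(\Lbd(G)) \setminus \bigcup_p D_p$ is a fixed base point, $\{D_p\}_{p \in \Phi}$ is a $G$-invariant family of mutually disjoint horoballs at the parabolic fixed points (with $\Phi$ the $G$-orbit of a set of cusp representatives), and each $X_{D_p}$ is the canonical lattice in $D_p$ transported equivariantly from $\mathrm{Stab}_G(p)$-invariant data on one representative per conjugacy class. Condition (ii) of uniform distribution holds after shrinking the $D_p$ so that $Go$ and the horoball lattices are $m$-separated across the interface. The content of the conjecture is therefore condition (i): every $z \in H(\Lbd(G))$ must lie within a uniform distance $M$ of some element of $X$. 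Inside each $D_p$ this is built into the canonical lattice, so one is reduced to showing that $Go$ is $M$-dense in $H(\Lbd(G)) \setminus \bigcup_p D_p$.

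Weak tightness reduces this to a density statement on $\partial H(\Lbd(G))$ modulo horoballs: every such $z$ is within $\ell$ of $\partial H(\Lbd(G))$, so it suffices that every boundary point of $H(\Lbd(G))$ outside the horoballs be within some fixed distance of $Go$. Divergence type is the hypothesis that should supply this, because the Patterson measure $\mu_o$ of the divergent Poincar\'e series lives on the conical limit set by the Bishop--Jones/Sullivan dichotomy, and conical limit points are precisely those directions along which $Go$ recurs uniformly to every geodesic ray. The qualitative recurrence encoded by divergence of $P^{\delta(G)}(Go,o)$ should therefore propagate to a metric density statement for $Go$ in a tubular neighbourhood of $\partial H(\Lbd(G)) \setminus \bigcup_p D_p$.

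The main obstacle is exactly this upgrade: converting measure-theoretic recurrence of Patterson--Sullivan theory into a \emph{uniform} metric density of $Go$ along $\partial H(\Lbd(G)) \setminus \bigcup_p D_p$. I would attempt a contradiction-by-shadow-lemma argument in the spirit of Theorem~\ref{shadow}: a hypothetical sequence $z_j \in \partial H(\Lbd(G)) \setminus \bigcup_p D_p$ with $d(z_j, Go) \to \infty$ would produce shadows in $\S^n$ of uniformly bounded spherical diameter whose Patterson mass stays bounded below (using non-atomicity of $\mu_o$ off $\Phi$, which should follow from divergence type together with weak tightness), while being intercepted by asymptotically no orbit points, contradicting divergence of the Poincar\'e series. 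Making this rigorous in the geometrically infinite tight regime is the hard part, and will likely require either refinements of the Patterson--Sullivan theory for uniformly distributed sets developed in Section~\ref{patterson-for-X} or an adaptation of the random-walk techniques of Stadlbauer~\cite{sta11} to transfer divergence-type recurrence between the orbit and the boundary of the convex hull.
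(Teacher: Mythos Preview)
The statement you are attempting to prove is labelled a \emph{Conjecture} in the paper and is left entirely open there; the paper offers no proof, so there is nothing to compare your attempt against on that front. What you have written is therefore not a reproduction or variant of the authors' argument but an attempted resolution of an open problem.

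As for the attempt itself, there is a genuine gap at the very first step, and it is fatal to the strategy as written. Your candidate set $X = Go \cup \bigcup_{p} X_{D_p}$ will essentially never be uniformly distributed when $G$ is geometrically infinite, which is precisely the regime the conjecture addresses. Condition~(i) for $X$ requires that $Go$ be $M$-dense in $H(\Lbd(G)) \setminus \bigcup_p D_p$; passing to the quotient, this says the truncated convex core $C(M_G) \setminus (\text{cusp neighbourhoods})$ has diameter at most $M$, i.e.\ is bounded. That is essentially geometric finiteness. For the paper's own motivating examples---non-trivial normal subgroups $N$ of convex cocompact groups $\hat G$, or the infinite-type surfaces of Section~\ref{examples-dim-gap}---the truncated convex core is unbounded and a single $N$-orbit is nowhere near $M$-dense in $H(\Lbd(N))$. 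Indeed, in Proposition~8.1 the authors obtain bounded type for such $N$ by taking a $\hat G$-orbit, not an $N$-orbit, as the uniformly distributed set; your equivariance trick with $\rho=1$ then fails because the relevant set is not $N$-invariant in the way you need.

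Your attempt to patch this with divergence type does not work either. The Hopf--Tsuji--Sullivan dichotomy gives that $\mu_o$-almost every limit point is conical, but conicality only says the geodesic ray returns within \emph{some} bounded distance of $Go$ infinitely often, with the bound depending on the limit point; it does not yield a uniform $M$ valid across all of $\partial H(\Lbd(G))$. Concretely, take $N \lhd \hat G$ with $\hat G$ cocompact Fuchsian and $\hat G/N \cong \Z$: then $N$ is of divergence type (Rees, Brooks), $H(\Lbd(N)) = \B^2$, yet $No$ is a single point in the $\Z$-cover and is certainly not $M$-dense in $\B^2$. Your shadow-lemma contradiction sketch cannot close because the premise it is meant to contradict is simply true in such examples. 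Any genuine attack on the conjecture must begin from a uniformly distributed set that is not a single $G$-orbit, and must locate the bounded-type inequality in some structure other than $G$-equivariance.
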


\end{document}